\newtheorem{theorem}{Theorem}[section]
\newtheorem{lemma}[theorem]{Lemma}
\newtheorem{prop}[theorem]{Proposition}
\newtheorem{assumption}[theorem]{Assumption}
\newtheorem{corro}[theorem]{Corollary}
\theoremstyle{definition}
\newtheorem{definition}[theorem]{Definition}
\theoremstyle{remark}
\newtheorem{remark}[theorem]{Remark}
\numberwithin{equation}{section}
\newcommand{\oW}{\ensuremath{\overline{\cW}}}
\newcommand{\oE}{\ensuremath{\overline{\cE}}}
\newcommand{\cB}{\ensuremath{\mathcal B}}
\newcommand{\cC}{\ensuremath{\mathcal C}}
\newcommand{\cD}{\ensuremath{\mathcal D}}
\newcommand{\cE}{\ensuremath{\mathcal E}}
\newcommand{\cF}{\ensuremath{\mathcal F}}
\newcommand{\cG}{\ensuremath{\mathcal G}}
\newcommand{\cH}{\ensuremath{\mathcal H}}
\newcommand{\cK}{\ensuremath{\mathcal K}}
\newcommand{\cL}{\ensuremath{\mathcal L}}
\newcommand{\cM}{\ensuremath{\mathcal M}}
\newcommand{\cN}{\ensuremath{\mathcal N}}
\newcommand{\cO}{\ensuremath{\mathcal O}}
\newcommand{\cW}{\ensuremath{\mathcal W}}
\newcommand{\bbN}{\ensuremath{\mathbb N}}
\newcommand{\bbR}{\ensuremath{\mathbb R}}
\newcommand{\bbZ}{\ensuremath{\mathbb Z}}
\newcommand{\bfM}{\ensuremath{\mathbf M}}
\newcommand{\bsM}{\ensuremath{\boldsymbol M}}
\newcommand{\diver}{\ensuremath{\mathrm{div}}}
\newcommand{\ldef}{\ensuremath{\mathrel{\mathop:}=}}
\newcommand{\rdef}{\ensuremath{=\mathrel{\mathop:}}}
\newcommand{\supp}{\mathrm{supp}}
\DeclareMathOperator{\capa}{\mathrm{Cap}}
\DeclareMathOperator{\loc}{\mathrm{loc}}
\begin{document}

\title[Weighted Sobolev spaces and diffusion processes: an example]{Singular weighted Sobolev spaces and diffusion processes: an example
 (due to V.V. Zhikov)}

\author{Alberto Chiarini}
\address{ETH Zurich}
\curraddr{101, R\"amistrasse\\ 8048, Zurich, Switzerland}
\email{alberto.chiarini@math.ethz.ch}
\thanks{}


\author{Pierre Mathieu}
\address{Aix-Marseille Universit\'e,}
\curraddr{CNRS, Centrale Marseille\\ I2M, UMR 7373\\ 13453, Marseille, France}
\email{pierre.mathieu@univ-amu.fr}
\thanks{}

\subjclass[2010]{}

\keywords{}

\date{\today}

\dedicatory{}

\begin{abstract} We consider the Sobolev space over $\bbR^d$ of square integrable functions whose gradient is also square integrable with respect to some positive weight.
 It is well known that smooth functions are dense in the weighted Sobolev space when the weight is uniformly bounded from below and above. This may not be the case when the weight is unbounded.
 In this paper, we focus on a class of two dimensional weights where the density of smooth functions does not hold. This class was originally introduced by V.V. Zhikov; such weights have  a unique singularity point of non-zero capacity.
 Following V.V. Zhikov, we first give a detailed analytical description of the weighted Sobolev space. Then, we explain how to use Dirichlet forms theory to associate a diffusion process to such a degenerate non-regular space.
\end{abstract}

\maketitle

\tableofcontents

\section{Introduction}
 We fix some positive weight  $\rho:\bbR^d \to [0,\infty]$ such that $\rho,\,\rho^{-1}\in L^1_{\mathrm{loc}}(\bbR^d)$. We consider the weighted Sobolev space over $\bbR^d$
 \begin{equation*}
 \cW\ldef \Big \{ u\in W^{1,1}_{\loc}(\bbR^d)\, :\,  \int_{\bbR^d} \big(|u|^2+|\nabla u|^2\big) \,\rho dx< \infty  \Big \},
\end{equation*}
where $W^{1,1}_{\loc}(\bbR^d)$ is the set of weakly differentiable functions with locally integrable gradient.
If $\rho$ is uniformly bounded from above and below, then $\cW$ coincides with the classical Sobolev space on $\bbR^d$ and it is well known that smooth functions are dense in $\cW$. In general, if the weight is not bounded,
smooth functions need not be dense in $\cW$, in which case we say that the weight is \emph{non-regular}.

Together with the space $\cW$ it is natural to consider the symmetric quadratic form
\[
\cE(u,v) := \int_{\bbR^d} \nabla u \cdot \nabla v \, d\mu, \quad u,v\in \cW,
\]
where $\mu(dx) = \rho\,dx$.
It is easy to see that the pair $(\cE,\cW)$ is a Dirichlet form on $L^2(\bbR^d,\mu)$. Accordingly, it is uniquely associated to a Markovian semigroup. This semigroup is not necessarily strongly Markov in the case  $\rho$ is non-regular. From general theory~\cite[Appendix A.4]{fukushima2011dirichlet}, there exists an equivalent regular Dirichlet form on a different state space which is associated to a strongly Markovian semigroup.
In this paper we focus on a class of non-regular weights (see~\eqref{def:weight_two} and below) with one point singularity of non zero capacity; this class was introduced by Zhikov in~\cite{zhikov1996connectedness,zhikov1998weighted}.
We are able to give a concrete description of the regularization of the corresponding Dirichlet form.
This is accomplished via a topological construction (see Section 3.3).
We will rely on the theory of many-point extensions to provide a characterization of the resolvent associated to the regularized Dirichlet form in terms of the resolvent associated to a diffusion process on $\bbR^d$ that is killed at the singular point (see Theorem~\ref{thm:semigroupW}).
The question of density of smooth functions in weighted Sobolev spaces has an intrinsic analytical flavour. With this work we show that this problem carries a probabilistic counterpart in the context of symmetric Markov processes and Dirichlet form theory.
\vspace{\baselineskip}

In order to motivate the subject of the paper from the point of view of stochastic analysis let us consider the following stochastic differential equation on $\bbR^d$
\begin{equation}\label{eq:distortedbrownianmotion}
 d X_t = \sqrt{2} d W_t + \frac{\nabla \rho}{\rho} (X_t) \,dt,\quad X_0 = x\in \bbR^d,
\end{equation}
where ${(W_t)}_{t\geq 0}$ is a $d$-dimensional Brownian motion and $\rho:\bbR^d\to [0,+\infty)$ is a function which for the moment we assume to be smooth, bounded and bounded away from zero.
The function $\rho$ is nothing else but the density with respect to the Lebesgue measure of the invariant distribution $\mu = \rho dx$ of the process $X$.
The model described in equation~\eqref{eq:distortedbrownianmotion} is known in the literature under the name of \emph{symmetric distorted Brownian Motion} or \emph{Langevin dynamics} associated to the potential $\log \rho$.

In such regular case, the classical theory of stochastic differential equations provides a unique strong solution up to the explosion time to equation~\eqref{eq:distortedbrownianmotion} whose generator is given by

\begin{equation}\label{eq:generator}
 L u \ldef  \frac{1}{\rho}\,\diver  (\rho \nabla u)  = \Delta u + \sum_{i=1}^d \partial_i (\log \rho)\partial_i u.
\end{equation}

In the literature, assumptions on the regularity and boundness of $\rho$ have been considerably softened. Particularly remarkable are the works~\cite{albeverio2003strong,rockner2015non}.
Here the authors assume that $\rho(x)>0$ for almost all $x\in \bbR^d$ and
\begin{equation}\label{eq:condrock}
 \sqrt{\rho} \in W^{1,2}_{\loc}(\bbR^d),\quad \frac{\nabla \rho}{\rho}  \in L^{(d+\epsilon)\vee 2}_{\loc}(\bbR^d,\mu),
\end{equation}
where $\epsilon>0$ and $W^{1,2}_{\loc}(\bbR^d)$ is the set of weakly differentiable functions with locally square integrable gradient.
Under this assumption, they are able to prove weak existence for any starting point in the set $\{\tilde{\rho}>0\}$,
being $\tilde{\rho}$ a continuous version of $\rho$, which exists in view of~\eqref{eq:condrock}.
Moreover, the constructed solution stays in $\{\tilde{\rho}>0\}$ before possibly going out of any ball in $\bbR^d$.
Once having shown weak existence, they can actually prove that the constructed weak solution is indeed strong and weakly, as well as pathwise, unique up to its explosion time.

For the interested reader we also mention the work~\cite{krylov2005strong} where the case of a Brownian motion with singular time dependent drift is considered. The authors prove that, in spite of the poor regularity, there is a unique strong solution up to the explosion time, provided some integrability on the drift.

If we were satisfied by just having weak solutions to equation~\eqref{eq:distortedbrownianmotion} for almost all starting points, then it would suffice to assume $\sqrt{\rho} \in W^{1,2}_{\loc}(\bbR^d)$.
Indeed, the bilinear form obtained from the operator~\eqref{eq:generator} integrating by parts $\int_{\bbR^d} u (-L) v d\mu$
\begin{equation}\label{eq:dirichlet_form}
 \cE(u,v) \ldef \int_{\bbR^d} \nabla u \cdot \nabla v \,d\mu,\quad u,v\in C_0^\infty(\bbR^d),
\end{equation}
is closable in $L^2(\bbR^d,\mu)$, where $C_0^\infty(\bbR^d)$ is the set of smooth functions with compact support. Denote by $\cH$ the completion of $C_0^\infty(\bbR^d)$ in $L^2(\bbR^d,\mu)$ with respect to the inner product $\cE_1(u,v) \ldef \cE(u,v)+\langle u, v\rangle$, where $\langle \cdot, \cdot\rangle$ is the scalar product in $L^2(\bbR^d,\mu)$.
Then, the pair $(\cE, \cH)$ is a regular strongly local Dirichlet form on $L^2(\bbR^d,\mu)$ (see Section~\ref{sec:elements} below for precise definitions). It follows from~\cite[Theorem 7.2.2]{fukushima2011dirichlet} that there exists a reversible diffusion process $(\Omega, \cM,{\{\cM_t\}}_{t\geq0},{\{X_t\}}_{t\geq0},P_x)$, $x\in \bbR^d$ in
$\bbR^d\cup \{\Delta \}$ with invariant measure $\mu$, lifetime $\zeta$ and cemetery $\Delta$. Moreover, ${\{X_t\}}_{t\geq 0}$ weakly solves~\eqref{eq:distortedbrownianmotion} for all $x\in \bbR^d$ outside a set of zero capacity.

The theory of Dirichlet forms shows its true strength when assumptions on  the regularity of $\rho$ drop and we ask for no more than measurability and some degree of integrability.
In this case it becomes clear that $L$, as defined in~\eqref{eq:generator}, must be considered only in a distributional sense, that is, one rather works directly with the bilinear form~\eqref{eq:dirichlet_form} to have a chance to construct a stochastic process formally associated to~\eqref{eq:generator}.
We require the minimal condition that
\begin{equation}\label{eq:cond}
 \rho, \rho^{-1}\in L^1_{\loc}(\bbR^d).
\end{equation}
On one hand, $\rho\in L^1_{\loc}(\bbR^d)$ is necessary to ensure that $\cE_1(u,u)<\infty$ for $u\in C_0^\infty(\bbR^d)$ and on the other hand $\rho^{-1}\in L^1_{\loc}(\bbR^d)$ grants closability of $(\cE,C_0^\infty(\bbR^d))$ in $L^2(\bbR^d,\mu)$. This assumption could be weakened even further, see for example~\cite[Chapter II]{ma2012introduction}.

Arguing as above, we can consider the completion $\cH$ of $C_0^\infty(\bbR^d)$ in $L^2(\bbR^d,\mu)$ with respect to $\cE_1$. Also in this case it turns out that $(\cE, \cH)$ is a regular strongly local Dirichlet form on $L^2(\bbR^d,\mu)$ and therefore we can find a diffusion process associated with it.

The choice of the domain of $\cE$ is extremely relevant in the characterization of the stochastic process as different domains yield different processes.
One of the most natural choices for a domain of $\cE$ is to take all the functions $u\in L^2(\bbR^d,\mu)$ for which expression~\eqref{eq:dirichlet_form} makes sense and is finite,
\begin{equation}\label{eq:W}
 \cW\ldef \Big \{ u\in W^{1,1}_{\loc}(\bbR^d)\, :\, \cE_1(u,u) < \infty  \Big \}.
\end{equation}
The class of functions $\cW$ is known in the literature as \emph{weighted} Sobolev space  with weight $\rho$. In fact, observe that the choice $\rho \equiv 1$ gives the standard Sobolev space on $\bbR^d$.
Based on~\eqref{eq:cond} and on the fact that for any ball $B\subset \bbR^d$
\begin{equation}\label{eq:localL1}
 \int_B |\nabla u| \, dx \leq {\left (\int_{\bbR^d} |\nabla u|^2 \, d\mu\right)}^\frac{1}{2}  {\left (\int_B \rho^{-1}\, dx \right)}^\frac{1}{2},
\end{equation}
one can prove that $\cW$ is an Hilbert space with inner product $\cE_1$, and consequently the pair $(\cE, \cW)$ is a closed symmetric form on $L^2(\bbR^d,\mu)$.
Noticing that for all $u\in \cW$, $v = (0\vee u)\wedge 1$ belongs to $\cW$ and $\cE_1(v,v)\leq \cE_1(u,u)$, we actually have that $(\cE,\cW)$ is a Dirichlet form on  $L^2(\bbR^d,\mu)$, which is also easily seen to be strongly local.
Clearly $\cH \subseteq \cW$ but equality is not true in general. In this article we are interested in situations where $\cH\neq \cW$. We have the following definition.

\begin{definition}
 We say that the weight $\rho:\bbR^d\to [0,+\infty)$ is \emph{regular} provided that $\cW = \cH$, that is, if smooth functions are dense in $\cW$.
\end{definition}

We will provide the interested reader with references and conditions for regularity of $\rho$ in Section~\ref{subsec:regularity} below. Assume that $\rho$ is not regular, then $\cH \subsetneq \cW$ and $(\cE,\cW)$ properly extends $(\cE,\cH)$.
A natural question is whether there is a stochastic process associated to the Dirichlet form $(\cE,\cW)$ on $L^2(\bbR^d,\mu)$. As $\cH \neq \cW$, $C_0^\infty(\bbR^d)$ is not dense in $\cW$ and we cannot be sure in general that $(\cE,\cW)$ is a regular Dirichlet form. Nonetheless, any Dirichlet form admits a regular representation on a possibly different state space (see Appendix in~\cite{fukushima2011dirichlet}).

The construction of the regular representation and of the state space is quite abstract and makes it hard to truly understand what this regularization procedure is about.
In this paper we focus on a class of examples of non-regular weights where we can construct concretely a regular representation of $(\cE, \cW)$, and consequently a diffusion process. We will be able to describe such a diffusion and compare it to that associated to $(\cE,\cH)$.

Let us give an informal description of this plan with the following example of non-regular weight. Consider
$\rho : \bbR^2\to [0,+\infty]$ defined by
\begin{equation}\label{def:weight_example}
 \rho(x)\;\ldef \;\left \{
 \begin{array}{ll}
  | x|^{-\alpha}\;, & \quad x_1 x_2 > 0,\; | x|\leq 1, \\
  | x|^\alpha \;,   & \quad x_1 x_2 < 0,\; | x|\leq 1, \\
  1 \;,             & \quad \mbox{otherwise,}
 \end{array}
 \right.
\end{equation}
with $\alpha \in (0,2)$. Let us denote by $Q_1$, $Q_2$, $Q_3$, $Q_4$ the four quadrants, starting from the top-right in anti-clockwise order.
Formally, generator~\eqref{eq:generator} is that of a Brownian motion outside $B(0,1)$, that of a Brownian motion with drift
$-\alpha x/| x|^2$ in $(Q_1\cup Q_3)\cap B(0,1)$ and of a Brownian motion with drift $\alpha x/| x |^2$ in $(Q_2\cup Q_4)\cap B(0,1)$. If we look at the radial component of this generator inside $B(0,1)$, we get
\begin{align}\label{eq:genrad1}
 L_r & = \partial_r^2 + (1-\alpha) \frac{1}{r} \partial_r\ ,\quad\mbox{on } (Q_1\cup Q_3)\cap B(0,1), \\
 \label{eq:genrad2}
 L_r & = \partial_r^2 + (1+\alpha) \frac{1}{r} \partial_r\ ,\quad\mbox{on } (Q_2\cup Q_4)\cap B(0,1).
\end{align}

The operators~\eqref{eq:genrad1} and~\eqref{eq:genrad2} are generators of two Bessel processes with parameters $(1-\alpha)/2$ and $(1+\alpha)/2$ respectively. It is well known in the literature that the condition $\alpha \in (0,2)$ implies that the process with generator~\eqref{eq:genrad1}  hits and spends no time in zero, and that the process with generator~\eqref{eq:genrad2} never hits the origin.

\begin{figure}[h]
 \centering
 \includegraphics[width=0.5\textwidth]{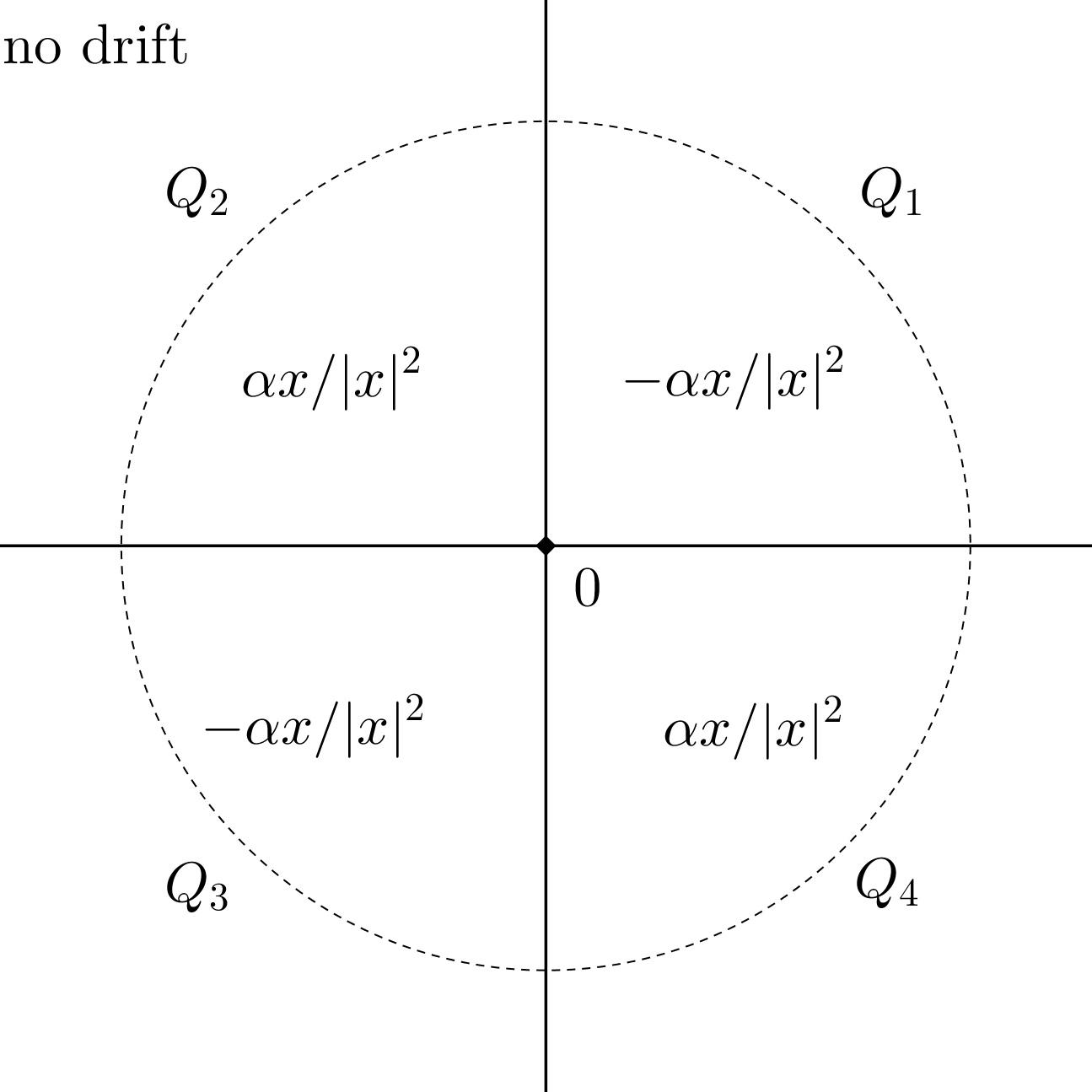}
 \caption{the drifts associated to $L$.}
\end{figure}

With this in mind, it is not hard to believe that the process associated to $(\cE,\cH)$ will eventually hit the origin but only coming from the region $Q_1\cup Q_3$. In fact, we will show that the non-regularity of $\rho$ implies that the origin has positive capacity.
By symmetry and reversibility, we can also understand that after hitting the origin the process leaves it from $Q_1$ or $Q_3$ with no preference.

A regularization of the Dirichlet form $(\cE,\cW)$ accounts in a change of the topology of $\bbR^2$ around the origin. Indeed, we will show that $\cW=\cH + \bbR\psi_0$, where $\psi_0$ as defined in~\eqref{def:psi} is continuous everywhere with the exception of zero. More precisely the function jumps from one to zero in going from $Q_1$ to $Q_3$ through the origin.

\begin{figure}[h]
 \begin{minipage}[b]{0.45\linewidth}
  \centering
  \includegraphics[width=\textwidth]{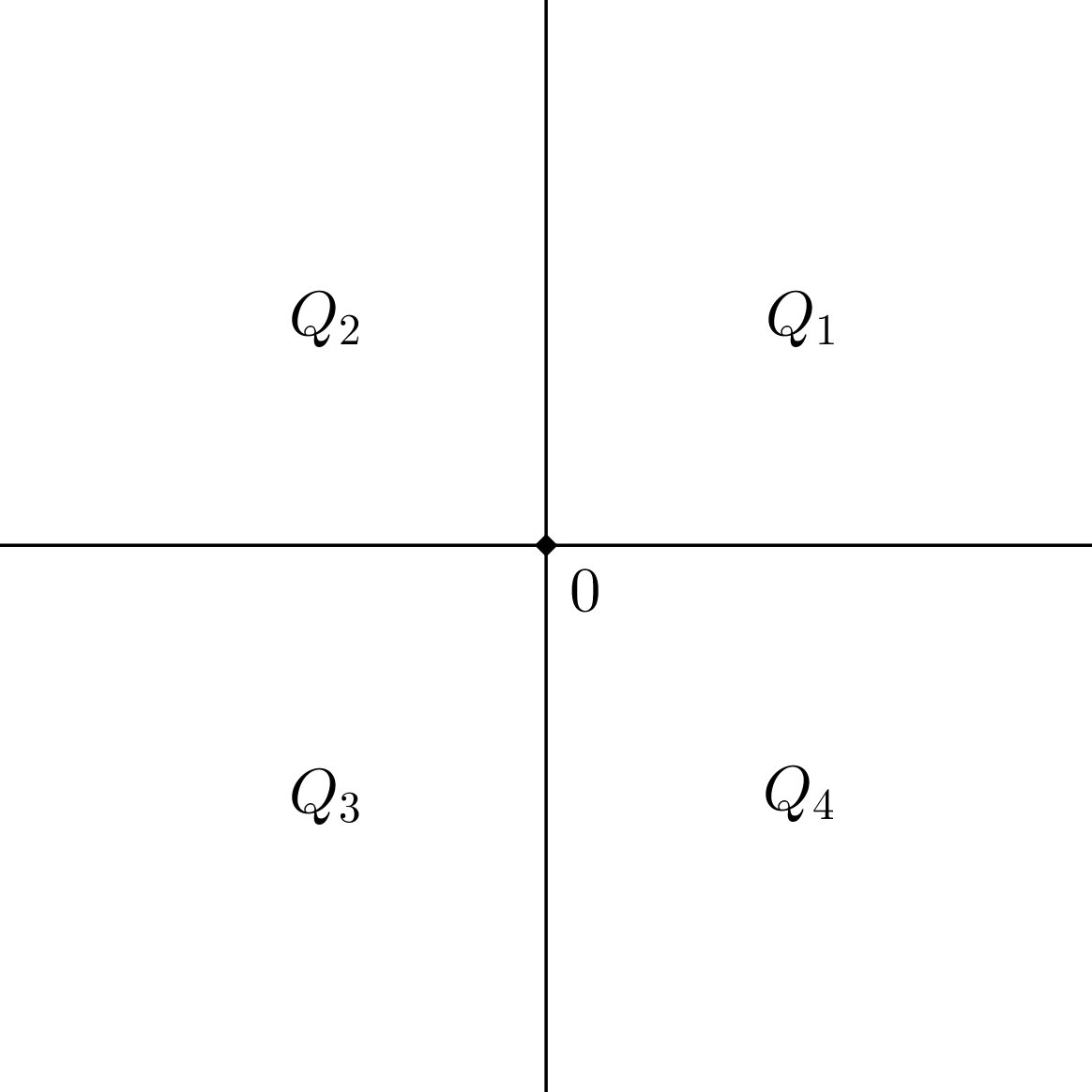}
 \end{minipage}
 \hspace{0.5cm}
 \begin{minipage}[b]{0.45\linewidth}
  \centering
  \includegraphics[width=\textwidth]{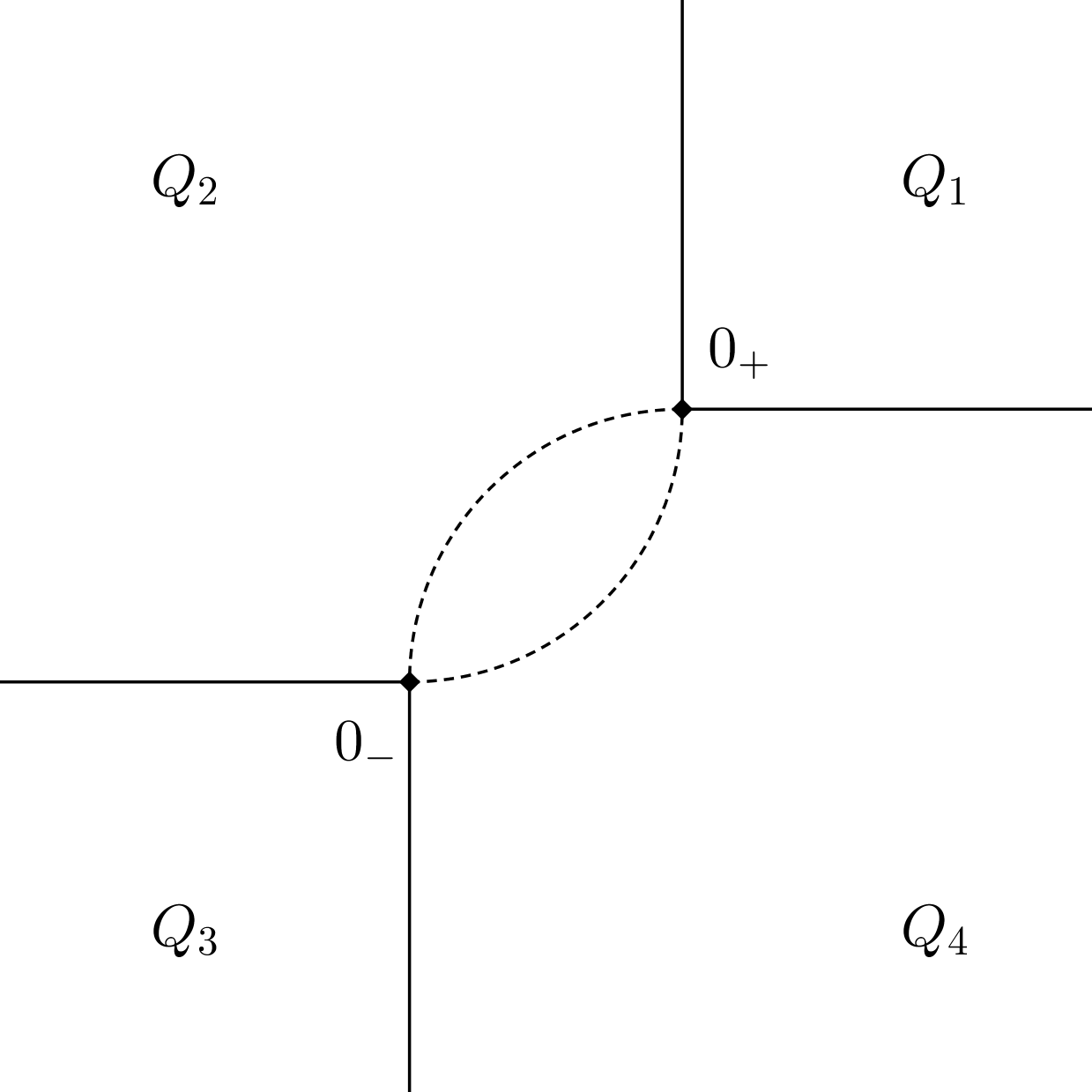}

 \end{minipage}
 \caption{Splitting the origin in two points.}
\end{figure}

Roughly speaking, it suffices to ``split'' the origin into two points to make $\psi_0$ continuous and consequently $(\cE,\cW)$ regular. On this new state space we can now find a strong Markov process with continuous trajectories associated to $(\cE,\cW)$. Observe that now $Q_1$ and $Q_3$ are disconnected.

Since the degeneracy is found only at the origin, it is intuitively clear that the behavior of the process associated to $(\cE,\cW)$ outside zero is the same as of the process associated to $(\cE,\cH)$. In particular, it will approach the origin from $Q_1$ or $Q_3$ only.
However, when the process comes closer to the origin, the new topology, that is, the fact that $Q_1$ and $Q_3$ are disconnected, plays a role. What we observe is that after hitting the origin, due to the continuity of the sample paths, the process must leave it from the same side it came. That is, if for example the process was in $Q_1$ just before hitting zero it will be in $Q_1$ right after.
Thus, the projection of the sample trajectories of the process associated to $(\cE,\cW)$ by identifying the two points is not going to determine a strongly Markovian law on $\bbR^2$ with the Euclidean topology.
\newline

\textbf{Organization of the paper.} In Section 2 we recall basic definitions and facts about Dirichlet forms theory. We also give an overview on the subject of weighted Sobolev spaces and the question of regularity of a weight.
The most important statement of Section 2 is Proposition~\ref{prop:onepoint} which states that non-regular weights must give positive capacity to their set of degeneracy points.
In Section 3 we introduce the class of non-regular weights we are interested in.  In Section 3.1, a first analytical part will be carried out to characterize $\cW$ as a rank one extension of $\cH$. We shall then describe in Section 3.2 the Hunt process $(X,P_x)$ associated to $(\cE,\cH)$ in terms of the one obtained from $X$ by killing the sample paths upon hitting the origin.
We will proceed by performing a regularization of $(\cE,\cW)$ and by describing the Hunt process $(Y,P^\cW_x)$ associated to it in Section 3.3.
In Section 4 other examples of non-regular weights are quickly discussed.

\section{Preliminaries}

\subsection{Elements of Dirichlet forms theory}\label{sec:elements}

In this section we briefly recall some of the fundamental definitions in the theory of Dirichlet forms in the special case of locally compact separable metric spaces.
For an exhaustive treatment of the subject we refer to~\cite{chen2012symmetric, fukushima2011dirichlet} and to~\cite{ma2012introduction} for further generalizations.

Let $(E,d)$ be a locally compact separable metric space, and $\mu$ a positive Radon measure on $E$ with full support. We denote by $\langle\cdot, \cdot\rangle$  the scalar product in $L^2(E,\mu)$. Let $\cF$ be a dense linear subspace of $L^2(E,\mu)$.
A pair  $(\cE,\cF)$ is called a symmetric form on $L^2(E,\mu)$ with domain $\cF$ provided that $\cE : \cF\times \cF \to \bbR$ is bilinear and non-negative.
As customary, for $\alpha>0$ we set $\cE_\alpha (u,v)\ldef \cE(u,v) + \alpha \langle u, v \rangle$. Note that $\cF$ becomes a pre-Hilbert space with respect to the inner product $\cE_\alpha$.
In the case $\cF$  is complete with respect to the metric determined by $\cE_\alpha$, the symmetric form $(\cE,\cF)$ is said to be \emph{closed}.

Given two symmetric forms $(\cE^{(1)},\cF_1)$ and $(\cE^{(2)},\cF_2)$ on $L^2(E,\mu)$, we say that the second is an extension of the first if $\cF_1\subset \cF_2$ and $\cE^{(1)}\equiv\cE^{(2)}$  on  $\cF_1\times \cF_1$.
A symmetric form is then said to be \emph{closable} if it admits a closed extension.

\begin{definition}
 A \emph{Dirichlet form} on $L^2(E,\mu)$ is a closed symmetric form $(\cE,\cF)$ which has the additional property of being \emph{Markovian},
 that is, for each $u\in \cF$ the function $v = (0\vee u)\wedge 1$ belongs to $\cF$ and $\cE(v,v)\leq \cE(u,u)$.
\end{definition}

Let $C_0(E)$ be the set of compactly supported continuous functions on $E$. A \emph{core} of a symmetric form $(\cE,\cF)$ is by definition a subset $\cC$ of $\cF \, \cap \, C_0(E)$ such that $\cC$ is dense in $\cF$ with respect to $\cE_1$ and in $C_0(E)$ with respect to the uniform norm.

\begin{definition}
 A symmetric form $(\cE,\cF)$ on $L^2(E,\mu)$ is said to be \emph{regular} if it possesses a core.
\end{definition}

For a $\mu$-measurable function $u$ the support of the measure $u(x) \mu(dx)$ will be denoted by $\supp[u]$. Clearly, if $u\in C(E)$ then $\supp[u]$ is just the closure of the set of points $x\in E$ where $u$ is not zero.

\begin{definition} We say that a symmetric form $\cE$ is \emph{local} if for any $u,v\in\cF$ with disjoint compact support $\cE(u,v)=0$. $\cF$ is said to be \emph{strongly local} if for any $u,v\in\cF$ with compact support and such that $v$ is constant on a neighborhood of $\supp[u]$, then $\cE(u,v)=0$.
\end{definition}

The success of the theory of Dirichlet forms is due to the rich interplay between the theory of strongly continuous contraction semigroups and Markov processes, in particular symmetric Hunt processes.
A Hunt process is a Markov process that possesses useful properties such as the right continuity of sample paths,
the quasi-left-continuity and the strong Markov property (we refer to~\cite[Appendix A.2]{fukushima2011dirichlet} for definitions).
Fix a $\mu$-symmetric Hunt process $\bfM \ldef (\Omega, \cM,{\{\cM_t \}}_{t\geq0},{\{X_t\}}_{t\geq0},P_x)$ on $(E,\cB(E))$ with lifetime $\zeta$ and cemetery $\Delta$.
The transition function of  $\bfM$
\[
 p_t f(x) := E_x\left[ f(X_t);\,t<\zeta\right],\quad t\geq 0
\]
on $E$ uniquely determines a strongly continuous Markovian semigroup $T_t$, $t\geq0$ on $L^2(E,\mu)$
and with that a Dirichlet form $(\cE,\cF)$ on $L^2(E,\mu)$. In view of this, we will say that $\bfM$ is associated to $(\cE,\cF)$.

The converse turns out to be true as well, as it is better described by the following celebrated result.
\begin{theorem}[Theorem 7.2.1 in~\cite{fukushima2011dirichlet}]\label{thm:huntprocess} Let $(\cE,\cF)$ be a regular Dirichlet form on $L^2(E,\mu)$. Then there exists a $\mu$-symmetric
 Hunt process $\bsM$ on $E$ whose Dirichlet form is the given one $\cE$.
\end{theorem}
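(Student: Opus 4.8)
The plan is to follow the classical Fukushima construction, whose point is to turn the purely $L^2$, $\mu$-equivalence-class data attached to $(\cE,\cF)$ into a genuinely pointwise-defined Markov process, at the cost of discarding a set of zero capacity. First I would record the analytic side: since $(\cE,\cF)$ is a closed symmetric non-negative form, the standard correspondence between such forms and non-negative definite self-adjoint operators yields an operator $A$ with $\cF=\mathrm{Dom}(A^{1/2})$ and $\cE(u,v)=\langle A^{1/2}u,A^{1/2}v\rangle$, hence a strongly continuous contraction semigroup $\{T_t\}_{t\ge 0}=\{e^{-tA}\}_{t\ge 0}$ and resolvent $\{G_\al\}_{\al>0}=\{(A+\al)^{-1}\}_{\al>0}$ on $L^2(E,\mu)$; the Markovian property of the form translates into sub-Markovianity, $0\le T_t f\le 1$ whenever $0\le f\le 1$ $\mu$-a.e. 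At this stage $T_t$ and $G_\al$ are defined only up to $\mu$-null sets, so they cannot yet serve as the transition function of a process started from individual points.

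Next I would develop the potential theory needed to upgrade these objects to pointwise statements, and this is exactly where regularity of the form is used. Define the $1$-capacity by $\capa(G)=\inf\{\cE_1(u,u):u\in\cF,\ u\ge 1\ \mu\text{-a.e. on }G\}$ for open $G$ and extend it to arbitrary sets; the sets of zero capacity are the exceptional sets, and one argues $\mu$-q.e.\ (quasi-everywhere). Using a countable subset of a core $\cC$, which exists by regularity and is dense both in $\cF$ for $\cE_1$ and in $C_0(E)$ for the uniform norm, one shows that the capacity is tight, i.e.\ there exist nests of compact sets, and that every $u\in\cF$ admits a quasi-continuous $\mu$-version $\tilde u$, unique q.e. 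In particular each $\al$-potential $G_\al f$ has a quasi-continuous version, and this is what lets us pin down values at individual points.

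The core of the argument is to produce, off a properly exceptional Borel set $N$ with $\capa(N)=0$, a family of kernels $R_\al(x,dy)$ on $E\setminus N$ such that $R_\al f(x)=\widetilde{G_\al f}(x)$ for all $x\in E\setminus N$, all $\al>0$, and $f$ in the chosen countable family, and then to extend $f$ to all bounded Borel functions. Using the resolvent equation, the sub-Markovian maximum principle and the quasi-continuous versions, one checks that $\{R_\al\}$ is an honest sub-Markovian resolvent of kernels satisfying $\al R_\al 1\le 1$ and the resolvent equation pointwise on $E\setminus N$. From such a resolvent one builds the process: the cleanest route is the Ray--Knight method, compactifying $E\setminus N$ so that $\{R_\al\}$ becomes a Ray resolvent, constructing the canonical right-continuous strong Markov process, and then using quasi-continuity together with the fact that $N$ is properly exceptional to show that the process avoids the added points and the negligible branch set, admits left limits, and is quasi-left-continuous, hence a Hunt process $\bsM$. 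Finally I would identify its Dirichlet form: since $R_\al f=G_\al f$ $\mu$-a.e., the transition semigroup of $\bsM$ agrees with $\{T_t\}$ in $L^2(E,\mu)$, $\mu$-symmetry follows from self-adjointness of $G_\al$, and the form associated to $\bsM$ through the transition-function correspondence recalled just before the statement is therefore exactly $(\cE,\cF)$.

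The main obstacle is precisely this passage from $\mu$-equivalence classes to pointwise kernels and then to well-regularized sample paths, carried out only off an exceptional set. One must simultaneously control the exceptional set, making it properly exceptional so that the process never re-enters it, obtain right continuity and existence of left limits, and, hardest of all, establish quasi-left-continuity. Regularity of $(\cE,\cF)$ is indispensable throughout, since it is what guarantees enough quasi-continuous functions and the tightness of capacity that keep the process alive on $E$ rather than escaping into the Ray--Knight boundary.
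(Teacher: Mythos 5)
This statement is quoted in the paper as Theorem 7.2.1 of the cited monograph of Fukushima--Oshima--Takeda; the paper itself offers no proof, so there is nothing internal to compare your argument against. Judged on its own terms, your outline is a correct and essentially complete sketch of the standard construction: form $\to$ self-adjoint operator $\to$ sub-Markovian semigroup and resolvent; regularity $\to$ core $\to$ tightness of $\capa_1$ and quasi-continuous versions; pointwise sub-Markovian resolvent kernels off a properly exceptional set; construction of the process; identification of the form. The one place where you genuinely diverge from the source is the final construction step: the book's own proof does not pass through a Ray--Knight compactification but builds the transition function $p_t(x,\cdot)$ directly, taking quasi-continuous modifications of $T_tf$ for $f$ in a countable core, verifying the Chapman--Kolmogorov identity off a properly exceptional set, invoking Kolmogorov extension, and then regularizing the paths (right continuity, left limits, quasi-left-continuity) using the capacity estimates coming from the nest structure. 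The Ray-resolvent route you propose is a legitimate alternative---it is essentially the path taken in the Ma--R\"ockner treatment of quasi-regular forms---and it buys a construction that generalizes beyond locally compact state spaces, at the cost of having to show afterwards that the process does not charge the Ray boundary or the branch points; the direct route stays on $E$ throughout but leans harder on local compactness and the uniform-norm density of the core in $C_0(E)$. Either way the key technical burden you correctly identify---making the exceptional set properly exceptional and proving quasi-left-continuity---is where the real work lies, and your sketch places the use of regularity in the right spots.
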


In the sequel we shall make use of the concept of capacity of a set, which we recall here below. Given a Dirichlet form $(\cE, \cF)$ on $L^2(E,m)$ and an open set $A\subset E$, consider
\[
 \cL_A :=\{u\in \cF : u\geq 1,m\mbox{-a.e.\ on } A\}.
\]
We define
\begin{equation}\label{def:capacity}
 \capa_1(A) = \left \{
 \begin{array}{ll}
  \inf_{u\in \cL_A}\cE_1(u,u), & \hbox{if $\cL_A \neq\emptyset$}, \\
  +\infty,                     & \hbox{if $\cL_A = \emptyset$.}
 \end{array}
 \right.
\end{equation}
For a general set $B\subset E$ we set
\[
 \capa_1(B) = \inf_{A\supset B, A\in \cO} \capa_1(A),
\]
where $\cO$ is the class of open sets of $E$. 
The present notion tells us already that sets with zero capacity are somewhat finer than
$\mu$-negligible sets. In fact, from the very definition we get that $\mu(A)\leq \capa_1(A)$ for an open set $A$. Let $A$ be any subset of $E$.
A statement depending on $x\in A$ is said to hold \emph{quasi-everywhere}, in short q.e., if there exists a set of zero capacity $N\subset A$ such that the said statement holds for all $x\in A\setminus N$.

The concept of zero capacity sets for a regular Dirichlet form $(\cE,\cF)$ on $L^2(E,\mu)$ is better understood in connection to the exceptional sets for a Hunt process
$\bfM$ associated to $(\cE,\cF)$. A set $N\subset E$ is called \emph{exceptional} if there exists a Borel set $B\supset N$ such that $P_x(\sigma_B <\infty)=0$ for $\mu$-almost all $x\in E$,
where $\sigma_B:=\inf \{t>0\,:\,X_t\in B\}$.
A set $N\subset E$ is called \emph{properly exceptional} if $N$ is Borel,  $\mu(N)=0$, and $P_x(X_t\in N\mbox{ or }X_{t-}\in N \mbox{ for some }t\geq0)=0$ for all $x\in E\setminus N$. It can be shown that  any exceptional set is contained in a properly exceptional set.
According to~\cite[Theorem 4.2.1]{fukushima2011dirichlet} a set is exceptional if and only if it has zero capacity

We can now answer the question about uniqueness of the process associated to a Dirichlet form.
It turns out that two Hunt processes with the same Dirichlet form are \emph{equivalent}, that is, they possess a common properly exceptional set outside of which their transition functions coincide for all times~\cite[see Theorem 4.2.8]{fukushima2011dirichlet}.

Let $E_\Delta = E \cup \{\Delta \}$  be the one-point compactification of $E$. When $E$ is already compact, $\Delta$ is just an isolated point. Any function $u$ on $A\subset E$ can always be extended to a function on $A\cup \{\Delta \}$  by setting $u(\Delta) = 0$. In particular, any continuous
function on $E$ vanishing at infinity is regarded as a continuous function on $E_\Delta$.
Let $u$ be an extended real valued function defined q.e.\ on $E$. We call $u$ quasi-continuous if for each $\epsilon>0$ there exists an open set $G$ such that $\capa_1(E\setminus G) <\epsilon$ and $u|_{E\setminus G}$ is continuous.
If we require the stronger condition that $u|_{E_{\Delta}\setminus G}$ is continuous, then we say that $u$ is quasi-continuous in the restricted sense.

\begin{prop}[Theorem 4.2.2 in~\cite{fukushima2011dirichlet}]\label{continuity} If u is quasi-continuous, then there exists a properly exceptional set $N$ such that for any $x\in E\setminus N$,
 \begin{equation}\label{eq:right_cont1}
  P_x\Big(u(X_t) \text{ is right continuous and } \lim_{t\uparrow t'} u(X_t) = u(X_{t'-}),\, \forall t'\in (0,\zeta)\Big) = 1
 \end{equation}
 and
 \begin{equation}\label{eq:right_cont2}
  P_x\Big(\lim_{t\uparrow \zeta} u(X_t) = u(X_{\zeta-}), X_{\zeta-}\in E\Big) = P_x( X_{\zeta-}\in E).
 \end{equation}
\end{prop}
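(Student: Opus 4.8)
The plan is to reduce both assertions to a single potential-theoretic fact: that a function which is continuous on the closed sets of an appropriate nest is, along almost every trajectory of $\bfM$, right-continuous with left limits, simply because the trajectory gets eventually trapped inside the members of the nest. First I would unwind the definition of quasi-continuity into such a nest. Applying the defining property with $\epsilon = 2^{-k}$ yields open sets $G_k$ with $\capa_1(G_k)$ small and $u$ continuous on $E\setminus G_k$; replacing $G_k$ by $\bigcup_{j\ge k}G_j$ (whose capacity is still controlled by countable subadditivity) I may assume the $G_k$ decrease, so that the closed sets $F_k := E\setminus G_k$ increase, each $u|_{F_k}$ is continuous, and $\capa_1(G_k)\to 0$.

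The heart of the argument is the claim that the hitting times $\sigma_{G_k} := \inf\{t>0 : X_t\in G_k\}$ increase to the lifetime $\zeta$, $P_x$-a.s.\ for q.e.\ $x$. To prove this I would invoke the $1$-equilibrium potential $e_{G_k}$ of $G_k$ from \cite{fukushima2011dirichlet}, which satisfies $\cE_1(e_{G_k},e_{G_k}) = \capa_1(G_k)$ and admits the probabilistic representation $e_{G_k}(x) = E_x[e^{-\sigma_{G_k}}]$ for q.e.\ $x$. Since $\capa_1(G_k)\to 0$, the potentials $e_{G_k}$ tend to $0$ in $\cE_1$-norm, and as $G_k$ decreases the $e_{G_k}$ decrease, so their limit is $0$ q.e. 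Monotonicity of $\sigma_{G_k}$ in $k$ then lets me pass to the limit and conclude $E_x[e^{-\lim_k\sigma_{G_k}}] = 0$ q.e., i.e.\ $\sigma_{G_k}\uparrow\zeta$, using the convention that $X_t=\Delta\notin G_k$ for $t\ge\zeta$ (so $\sigma_{G_k}<\infty$ forces $\sigma_{G_k}<\zeta$). Every exceptional set appearing here carries zero capacity, hence is exceptional by Theorem 4.2.1 of \cite{fukushima2011dirichlet}; since each exceptional set sits inside a properly exceptional one, I may enlarge them all to a single properly exceptional $N$ outside of which everything below holds.

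It then remains to read off the pathwise regularity. Fix $x\in E\setminus N$ and $k$. On $\{t'<\sigma_{G_k}\}$ the whole trajectory $\{X_s : s< t'\}$ lies in the closed set $F_k$; since $\bfM$ is a Hunt process its paths are right-continuous with left limits in $E_\Delta$, and closedness of $F_k$ keeps these limits inside $F_k$ up to $\sigma_{G_k}$. Composing with the continuous $u|_{F_k}$ shows $t\mapsto u(X_t)$ is right-continuous and $\lim_{t\uparrow t'}u(X_t)=u(X_{t'-})$ for all $t'<\sigma_{G_k}$; letting $k\to\infty$ and using $\sigma_{G_k}\uparrow\zeta$ gives \eqref{eq:right_cont1}. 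For \eqref{eq:right_cont2} I would note that on $\{\zeta<\infty\}$ the relation $\sigma_{G_k}\uparrow\zeta$ means the path never enters $G_k$ for $k$ large, so it is confined to $F_k$ on all of $[0,\zeta)$; hence whenever $X_{\zeta-}\in E$ the left limit lies in $F_k$ and $\lim_{t\uparrow\zeta}u(X_t)=u(X_{\zeta-})$ by continuity of $u|_{F_k}$. The residual polar set $E\setminus\bigcup_k F_k$ can be absorbed into $N$, and since $N$ is avoided by the paths together with their left limits, the event $\{X_{\zeta-}\in E\setminus\bigcup_k F_k\}$ is $P_x$-null, closing the remaining gap.

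I expect the main obstacle to be the trapping claim $\sigma_{G_k}\uparrow\zeta$: this is precisely where the analytic smallness $\capa_1(G_k)\to 0$ is converted into almost-sure confinement of trajectories, and it hinges on the identity $e_{G_k}(x)=E_x[e^{-\sigma_{G_k}}]$ together with careful bookkeeping of the cemetery $\Delta$ and the lifetime $\zeta$ in both regimes $\zeta<\infty$ and $\zeta=\infty$. Everything afterward is a routine composition of continuity of $u|_{F_k}$ with the sample-path regularity guaranteed by the Hunt property.
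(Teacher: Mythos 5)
The paper does not prove Proposition~\ref{continuity}: it is quoted verbatim from Theorem~4.2.2 of~\cite{fukushima2011dirichlet}, so there is no in-paper argument to compare against. Your proposal reconstructs the standard textbook proof: extract from quasi-continuity a decreasing sequence of open sets $G_k$ with $\capa_1(G_k)\to 0$ and $u$ continuous on the closed sets $F_k=E\setminus G_k$; convert the capacity decay into path confinement through the identity $e_{G_k}(x)=E_x[e^{-\sigma_{G_k}}]$ for q.e.\ $x$, the bound $\cE_1(e_{G_k},e_{G_k})=\capa_1(G_k)$, and the monotone q.e.\ convergence $e_{G_k}\downarrow 0$; then compose $u|_{F_k}$ with the right-continuity and left limits of the Hunt sample paths. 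This is exactly the mechanism of the cited reference, and for \eqref{eq:right_cont1} your argument is complete (including the absorption of $\bigcap_k G_k$ into $N$, which takes care of right-continuity at $t=0$). One notational correction: what you actually establish is $\lim_k\sigma_{G_k}=+\infty$ $P_x$-a.s.\ for q.e.\ $x$, not $\sigma_{G_k}\uparrow\zeta$; you use it correctly, but the claim should be stated as $\lim_k\sigma_{G_k}\geq\zeta$, with $\sigma_{G_k}=+\infty$ for all large $k$ on $\{\zeta<\infty\}$.

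The one genuine gap is in \eqref{eq:right_cont2}. Your deduction that the path is eventually confined to a single $F_k$ on all of $(0,\zeta)$ relies on $\zeta<\infty$, since only then does $\lim_k\sigma_{G_k}=+\infty$ force $\sigma_{G_k}=+\infty$ for $k$ large. On the event $\{\zeta=\infty,\;X_{\zeta-}\in E\}$ the same limit merely says that for each finite $T$ the path avoids some $G_{k(T)}$ up to time $T$, with $k(T)$ possibly unbounded; this is not enough to transport the continuity of $u|_{F_k}$ to the limit $t\uparrow\zeta=\infty$, so that case needs a separate argument (or a proof that the event is $P_x$-null). For the purposes of this paper the gap is harmless: Proposition~\ref{continuity} is invoked only in the proof of Proposition~\ref{prop:approach_origin}, for the killed process $X^0$ whose lifetime $\sigma$ is a.s.\ finite by Proposition~\ref{prop:irreducible}, so only the finite-lifetime branch of \eqref{eq:right_cont2} is ever used.
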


We recall that according to~\cite[Theorem 2.1.3]{fukushima2011dirichlet} every function $u\in \cF$ admits a quasi-continuous $\mu$-modification  $\tilde{u}$ in the restricted sense.

Fix an $\mu$-symmetric Hunt process $\bfM = (\Omega, \cM,{\{\cM_t\}}_{t\geq0},{\{X_t\}}_{t\geq0},P_x)$ on $E$ which is associated with a regular Dirichlet form $(\cE,\cF)$ on $L^2(E,\mu)$.
Given an open set $G$, one can consider the part of $\bfM$ on $G$, that is,
the stochastic process obtained from $\bfM$ by killing the sample paths upon leaving $G$, call this process $\bfM_G$.
It can be shown that $\bfM_G$ is a Hunt process on $(G,\cB(G))$ with transition probability function given by
\[
 p_t^G(x,B) \ldef P_x(X_t \in B, t < \sigma_{E\setminus G}),\quad B\in \cB(G), x\in G,
\]
where $\sigma_{E\setminus G} \ldef \inf \{t>0,\, X_t\notin G\}$.
Moreover, $\bfM_G$ is associated with a regular Dirichlet form $(\cE_G,\cF_G)$ on $L^2(G,\mu)$, where $\cE_G$ is the restriction of $\cE$ to the set $\cF_G\ldef \{ u \in \cF :\, \tilde{u}|_{E\setminus G} =0\mbox{ q.e.\ } \}$, which can be identified with a subset of $L^2(G,\mu)$ in an obvious way.

\subsection{The question of regular weights}\label{subsec:regularity} Let $\Omega$ be an open subset of $\bbR^d$. The problem of  identifying necessary and sufficient conditions such that a measurable weight $\rho:\Omega\to [0,\infty]$ is regular
has been studied quite extensively in a series of papers by Zhikov~\cite{zhikov1998weighted},~\cite{zhikov2011variational},~\cite{zhikov2013density} and~\cite{zhikov2016density}.
The author was interested in the subject mainly in relation to the theory of calculus of variations where the inequality $\cH\neq \cW$  expresses the so called Lavrentiev's phenomenon.
First we should mention that the one dimensional case does not present any interesting feature, since in this case $\cH=\cW$ for any possible  integrable weight~\cite[Section 4]{zhikov1998weighted}.

In the theory of degenerate elliptic equations the weight $\rho$ is assumed to satisfy a condition of the type
\begin{equation}\label{eq:pq_condition}
 \rho\in L^p(\Omega),\quad \rho^{-1}\in L^q(\Omega),\quad \frac{1}{p}+\frac{1}{q}<\frac{2}{d},
\end{equation}
which makes it possible to prove weighted analogues of the Sobolev embedding theorems.
The question $\cH=\cW$ was also addressed in this context but without much success, indeed, it was later found that neither condition~\eqref{eq:pq_condition} nor the condition that $\rho$ and $\rho^{-1}$ are integrable to an arbitrary power ensures the equality.
For example it is shown in~\cite{zhikov1998weighted}  for $d=2$ that the weight
\[
 \rho(x):=\left \{
 \begin{array}{ll}
  {\left(\log \frac{1}{|x|}\right)}^{-\alpha}, & x_1x_2>0, \\
  {\left(\log \frac{1}{|x|}\right)}^\alpha,    & x_1x_2<0,
 \end{array}
 \right.
\]
is non-regular for any $\alpha>1$, even though it is easily seen to be integrable to an arbitrary power.

Using the same smoothing techniques as in the classical case $\rho\equiv1$ it is possible to show that $\rho$ is regular if it belongs to the Muckenhaupt's class $A_2$, that is, if
\[
 \sup_{B}\,\left(\frac{1}{|B|}\int_{B} \rho \,dx\right) \left(\frac{1}{|B|}\int_{B}\rho^{-1}\,dx\right) < \infty,
\]
where the supremum is taken over all balls $B\subset \bbR^d$. An interesting fact was established in~\cite{cassano1994local}, namely if the Poincar\'e's inequality
\[
 \int_B \Big|u - \int_B u\rho dy\Big|^2\,\rho  dx\leq C |B|^{2/d} \int_{B} |\nabla u|^2\,\rho dx
\]
holds for all balls $B\subset\bbR^d$ and $u\in \cW$, then $\rho$ is regular. The drawback of this statement is that it is very hard to verify.

In a recent paper~\cite{zhikov2013density}, Zhikov uses a truncation argument to obtain the following more general result.
\begin{theorem} Suppose that $\rho = w w_0$ with $w_0 \in A_2$. The equality $\cH = \cW$ holds if
 \begin{equation}\label{condition:regularity}
  \lim_{n\to\infty} \frac{1}{n^2} {\left(\int_{\Omega} w^{n}w_0\,dx\right)}^{\frac{1}{n}} {\left(\int_{\Omega}w^{-n}w_0\,dx\right)}^{\frac{1}{n}} < +\infty.
 \end{equation}
\end{theorem}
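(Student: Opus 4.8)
The plan is to prove density of smooth functions by an explicit approximation scheme: the $A_2$-hypothesis on $w_0$ handles the region where $\rho=ww_0$ is non-degenerate, while the integral condition~\eqref{condition:regularity} controls the contribution of the set where $w$ degenerates. First I would reduce to bounded functions. Given $u\in\cW$, the truncations $u_M\ldef(-M)\vee u\wedge M$ lie in $\cW$ by the Markovian property, satisfy $\cE(u_M,u_M)\le\cE(u,u)$, and converge to $u$ in $\cE_1$; hence it suffices to treat an arbitrary $u\in\cW\cap L^\infty$.

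For such $u$ the only obstruction to density is localized at the degeneracy set $S\ldef\{w=0\}\cup\{w=+\infty\}$, since on any region where $w$ is bounded above and below the weight $\rho$ is comparable to the $A_2$-weight $w_0$, where classical mollification converges in the weighted energy. I would therefore seek cutoffs $\eta_n$ with $0\le\eta_n\le1$, $\eta_n\to1$ pointwise, vanishing in a neighbourhood of $S$, and with
\[
 \int_\Omega|\nabla\eta_n|^2\,\rho\,dx\longrightarrow0.
\]
Given these, boundedness of $u$ and the product rule give $\nabla(\eta_n u)-\nabla u=(\eta_n-1)\nabla u+u\nabla\eta_n$, whose terms vanish in $L^2(\rho)$: the first by dominated convergence, the second because $\int|u|^2|\nabla\eta_n|^2\rho\le\|u\|_\infty^2\int|\nabla\eta_n|^2\rho\to0$. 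Thus $\eta_n u\to u$ in $\cE_1$, and each $\eta_n u$ is supported where $\rho$ is comparable to $w_0$, hence approximable by smooth functions through the $A_2$-theory; this yields $u\in\cH$.

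The heart of the matter, and the step I expect to be the main obstacle, is the construction of the $\eta_n$ together with the decay of their energy, and this is where~\eqref{condition:regularity} enters. I would build $\eta_n$ to equal $1$ on $\{e^{-n}\le w\le e^{n}\}$, to vanish on $\{w\le e^{-2n}\}\cup\{w\ge e^{2n}\}$, and to interpolate on the two intermediate windows along a scale logarithmic in $w$ of width proportional to $n$; this forces $\eta_n\to1$ almost everywhere on $\{0<w<\infty\}$, a set of full measure under~\eqref{eq:cond}, while inserting a factor $n^{-1}$ into the gradient. Estimating $\int|\nabla\eta_n|^2\rho$ on the two transition windows by Hölder's inequality with exponent $n$ produces precisely $\big(\int_\Omega w^{n}w_0\big)^{1/n}\big(\int_\Omega w^{-n}w_0\big)^{1/n}$, against which the normalisation $n^{-2}$ in~\eqref{condition:regularity} is exactly calibrated. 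The delicate point is that no pointwise bound on $\nabla w$ is available, so $\eta_n$ cannot naively be taken as a function of $w$; Zhikov's truncation argument is what circumvents this, and making it rigorous — simultaneously taming the blow-up $\{w=\infty\}$ and the vanishing $\{w=0\}$, and extracting genuine decay (rather than mere boundedness) from~\eqref{condition:regularity} by optimising over $n$ — is where the real work lies.
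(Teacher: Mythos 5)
First, a point of comparison: the paper does not prove this theorem at all --- it is quoted from Zhikov~\cite{zhikov2013density} with only the remark that the proof is ``a truncation argument'' --- so there is no in-paper proof to measure yours against, and your proposal must stand on its own. It does not: the gap sits exactly where you place ``the heart of the matter'', and the fix you defer to (``Zhikov's truncation argument'') is not a repair of your construction but a structurally different argument. Concretely, if $\eta_n$ is a function of $\log w$ interpolating on the windows $\{e^{-2n}\le w\le e^{-n}\}$ and $\{e^{n}\le w\le e^{2n}\}$, then $\nabla\eta_n$ is a multiple of $\nabla w/w$; but $w$ is merely measurable, so $\eta_n$ need not belong to $W^{1,1}_{\loc}$ at all, and even formally $\int|\nabla\eta_n|^2\rho\,dx$ involves $|\nabla w|^2$, a quantity that appears nowhere in~\eqref{condition:regularity}. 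H\"older's inequality with exponent $n$ cannot turn an integral containing $|\nabla w|^2$ into ${(\int_\Omega w^n w_0)}^{1/n}{(\int_\Omega w^{-n}w_0)}^{1/n}$: the left-hand side carries a gradient of the weight and the right-hand side does not. No choice of interpolation profile removes this obstruction.

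Two further steps would also fail as written. The support of $\eta_n u$ is a level set $\{e^{-2n}\le w\le e^{2n}\}$, not an open set on which $\rho$ is comparable to $w_0$; mollification --- the mechanism behind density for $A_2$ weights --- smears $\eta_n u$ onto the complementary set where $w$ is unbounded, and a sequence converging to $\eta_n u$ in the $w_0$-energy need not converge in the $ww_0$-energy. More fundamentally, your scheme is the ``cut off near the degeneracy set with vanishing energy'' strategy of Proposition~\ref{prop:onepoint}, which succeeds precisely when that set has zero capacity; condition~\eqref{condition:regularity} is not a capacity hypothesis and is not calibrated to feed such a construction. In Zhikov's argument the truncation is applied to the function $u$ (so that $|u|\le M$ and hence the mollifications satisfy $\|\nabla u_\epsilon\|_\infty\le CM/\epsilon$), the approximants are the mollifications of $u$ itself, and the product of the two $L^n(w_0\,dx)$-norms together with the normalisation $n^{-2}$ emerges from a H\"older/interpolation chain for the mollification error $\int|\nabla u_\epsilon-\nabla u|^2\,ww_0\,dx$, passing through $L^{p}(w_0)$ for exponents $p$ near $2$ and finally optimising $n$ against the mollification scale $\epsilon$ (roughly $n\sim\log(1/\epsilon)$, so that $\|\nabla u_\epsilon\|_\infty^{2/n}$ stays bounded). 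That interpolation-in-$n$ against the smoothing parameter is the idea your outline is missing, and without it the role of the exponent $n$ and of the factor $n^{-2}$ in~\eqref{condition:regularity} is not actually explained by your construction.
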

As a remarkable consequence it should be noted that $\rho = w w_0$ with $w_0 \in A_2$ is regular if $\exp(t w)w_0$ and $\exp(-t w)w_0$ belong to $L^1(\Omega)$ for some $t>0$, which was a conjecture posed by De Giorgi.

A particularly interesting scenario for the question of regularity, which will also include our class of models in Section~\ref{sec:guidingmodel}, is the case where the weight is degenerate only on a compact set of measure zero.
For the following proposition, which appears in~\cite{zhikov1998weighted} we provide a sketch of the proof for the reader's convenience.

\begin{prop}\label{prop:onepoint} Let $F$ be a compact subset of $\Omega$. Assume that $\rho$ degenerates only in $F$, that is, for all $\epsilon>0$ there exists $c(\epsilon)>1$ such that
 \[
  c{(\epsilon)}^{-1} \leq \rho(x)\leq c(\epsilon),
 \]
 for almost all $x\in \Omega$ such that $d(x,F)>\epsilon$. If $F$ has zero Lebesgue measure and $\capa_1(F) = 0$, then $\rho$ is regular. Here the capacity is the one associated to $(\cE,\cH)$ on $L^2(\Omega,\mu)$.
\end{prop}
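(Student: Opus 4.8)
The plan is to establish the nontrivial inclusion $\cW\subseteq\cH$, the reverse inclusion $\cH\subseteq\cW$ being immediate since $\cW$ is $\cE_1$-closed and contains $C_0^\infty(\Omega)$. I would first reduce to bounded functions: given $u\in\cW$, the truncations $u_M:=(-M)\vee u\wedge M$ lie in $\cW$ and satisfy $\cE_1(u-u_M,u-u_M)=\int_{\{|u|>M\}}|\nabla u|^2\,d\mu+\int|u-u_M|^2\,d\mu\to0$ as $M\to\infty$ by dominated convergence, so it suffices to treat $u\in\cW\cap L^\infty$. The strategy is then to excise a neighbourhood of $F$ using the vanishing capacity, and to invoke density of smooth functions on the region where $\rho$ is nondegenerate.

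For the excision I would exploit $\capa_1(F)=0$: for each $n$ there is an open set $A_n\supseteq F$ and $w_n\in\cH$ with $w_n\geq1$ a.e.\ on $A_n$ and $\cE_1(w_n,w_n)<1/n$; replacing $w_n$ by $\phi_n:=(0\vee w_n)\wedge1$, which remains in $\cH$ by the Markovian property and does not increase $\cE_1$, yields a sequence $\phi_n\in\cH$ with $0\le\phi_n\le1$, $\phi_n=1$ a.e.\ on $A_n$, and $\cE_1(\phi_n,\phi_n)\to0$. In particular $\|\phi_n\|_{L^2(\Omega,\mu)}\to0$, so along a subsequence $\phi_n\to0$ $\mu$-a.e., hence Lebesgue-a.e.\ since $\rho^{-1}\in L^1_{\loc}(\Omega)$ forces $\rho>0$ a.e.

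The heart of the argument is to show $u\phi_n\to0$ in $\cE_1$, so that $u_n:=u(1-\phi_n)\to u$. Writing $\nabla(u\phi_n)=\phi_n\nabla u+u\nabla\phi_n$, I would estimate
\[
 \cE(u\phi_n,u\phi_n)\le 2\int\phi_n^2|\nabla u|^2\,d\mu+2\int u^2|\nabla\phi_n|^2\,d\mu .
\]
The first term tends to $0$ by dominated convergence (as $\phi_n\to0$ a.e.\ and $|\nabla u|^2\rho\in L^1$), and similarly $\int(u\phi_n)^2\,d\mu\to0$; the delicate term is the second, and this is precisely where the boundedness of $u$ is used: $\int u^2|\nabla\phi_n|^2\,d\mu\le\|u\|_\infty^2\,\cE(\phi_n,\phi_n)\to0$. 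I expect this to be the main obstacle — without the initial truncation, large values of $u$ near $F$ could amplify the energy of the cutoffs and destroy the estimate. Consequently $u_n\to u$ in $\cE_1$, and each $u_n$ vanishes a.e.\ on the neighbourhood $A_n$ of $F$.

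Finally, since $F$ is compact, $A_n$ contains an $\varepsilon_n$-neighbourhood of $F$, so $u_n=0$ a.e.\ on $\{x\in\Omega:d(x,F)<\varepsilon_n\}$ and $u_n$ is supported in $\{d(\cdot,F)\ge\varepsilon_n\}$, where by hypothesis $c(\varepsilon_n)^{-1}\le\rho\le c(\varepsilon_n)$. On this region $\cE_1$ is equivalent to the classical $W^{1,2}$ norm, and $F$ being Lebesgue-negligible makes it harmless; standard mollification (with a cutoff to secure compact support) then produces $C_0^\infty(\Omega)$ functions converging to $u_n$ in $\cE_1$, whence $u_n\in\cH$. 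Since $\cH$ is closed under $\cE_1$ and $u_n\to u$, we conclude $u\in\cH$, completing the proof.
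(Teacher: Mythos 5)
Your proof is correct and follows essentially the same route as the paper: truncate to bounded $u$, multiply by capacitary cutoffs adapted to $\capa_1(F)=0$, control the cross term $\int u^2|\nabla\phi_n|^2\,d\mu$ by $\|u\|_\infty^2\,\cE(\phi_n,\phi_n)$, and use the nondegeneracy of $\rho$ away from the compact set $F$ together with classical density to place the resulting functions in $\cH$. The only minor (and slightly cleaner) difference is the conclusion: you get strong $\cE_1$-convergence of $u(1-\phi_n)$ to $u$ by combining the a.e.\ convergence of $\phi_n$ with dominated convergence, whereas the paper settles for uniform boundedness of the gradients, hence weak convergence, and then invokes the weak closedness of the strongly closed subspace $\cH$.
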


\begin{proof} We want to show that each function $u\in \cW$ can be approximated by smooth functions with respect to $\cE_1$. Without loss of generality we can assume $|u|<M$ for some positive $M$.
 Indeed, $-k\vee (u\wedge k)\to u$ with respect to $\cE_1$ by~\cite[Theorem 1.4.2]{fukushima2011dirichlet} as $k\to+\infty$.
 By definition of capacity and the compactness of $F$ we can find a sequence of functions $\phi_n\in \cH$ such that
 \[
  \int_\Omega |\nabla \phi_n|^2\rho \,dx \leq \frac{1}{n},\quad 0\leq \phi_n\leq 1,
 \]
 $\phi_n(x) = 0$ for all $x\in\Omega$ such that $d(x,F)< 1/n$ and $\phi_n \to 1$ almost-surely as $n\to\infty$, where it is important to use $\capa_1(F)=0$.
 Clearly, $u \phi_n \to u$ almost surely and therefore in $L^2(\Omega,\mu)$ by the dominated convergence theorem.

 We notice that $\supp [u\phi_n] \subset \{ x\in \Omega \,:\,d(x,F)\geq 1/n\}$ where $\rho$ is bounded from above and below.
 The density of smooth functions in the classical Sobolev space implies $u\phi_n \in \cH$.

 We now show that ${\{\nabla(u\phi_n)\}}_{n\in \bbN}$ is bounded in ${(L^2(\Omega,\mu))}^d$. By the chain rule and the triangular inequality,
 \begin{equation}\label{eq:chainrule}
  \cE(u\phi_n,u\phi_n) \leq 2 \int_{\Omega}  |\nabla u|^2 \phi_n^2\, \rho dx +  2 \int_{\Omega}  |\nabla \phi_n|^2 u^2\, \rho dx.
 \end{equation}
 The first contribution in~\eqref{eq:chainrule} is trivially uniformly bounded in $n\in\bbN$ by $2\cE(u,u)$, while the second contribution can be bounded using
 \[
  \int_{\Omega}  |\nabla \phi_n|^2 u^2\, \rho dx \leq \frac{M^2}{n}
 \]
 which is uniformly bounded as well.  It is now easy to show that $\nabla(u \phi_n)$ converges weakly to $\nabla u$ in ${(L^2(\Omega,\rho))}^d$.
 This is enough to conclude, since we proved that $\cH$ is weakly dense in $\cW$ with respect to $\cE_1$. Thus, $\cH=\cW$ being $\cH$ strongly closed.
\end{proof}

\section{The guiding model}\label{sec:guidingmodel}

We will consider a distorted Brownian motion and the corresponding Dirichlet form on the plane. We first divide $\bbR^2$ in the union of the four quadrants which meet at the origin. We name them $Q_1$, $Q_2$, $Q_3$ and $Q_4$ starting from the top-right corner in anti-clockwise order.

\begin{figure}[h]
 \centering
 \includegraphics[width=0.8\textwidth]{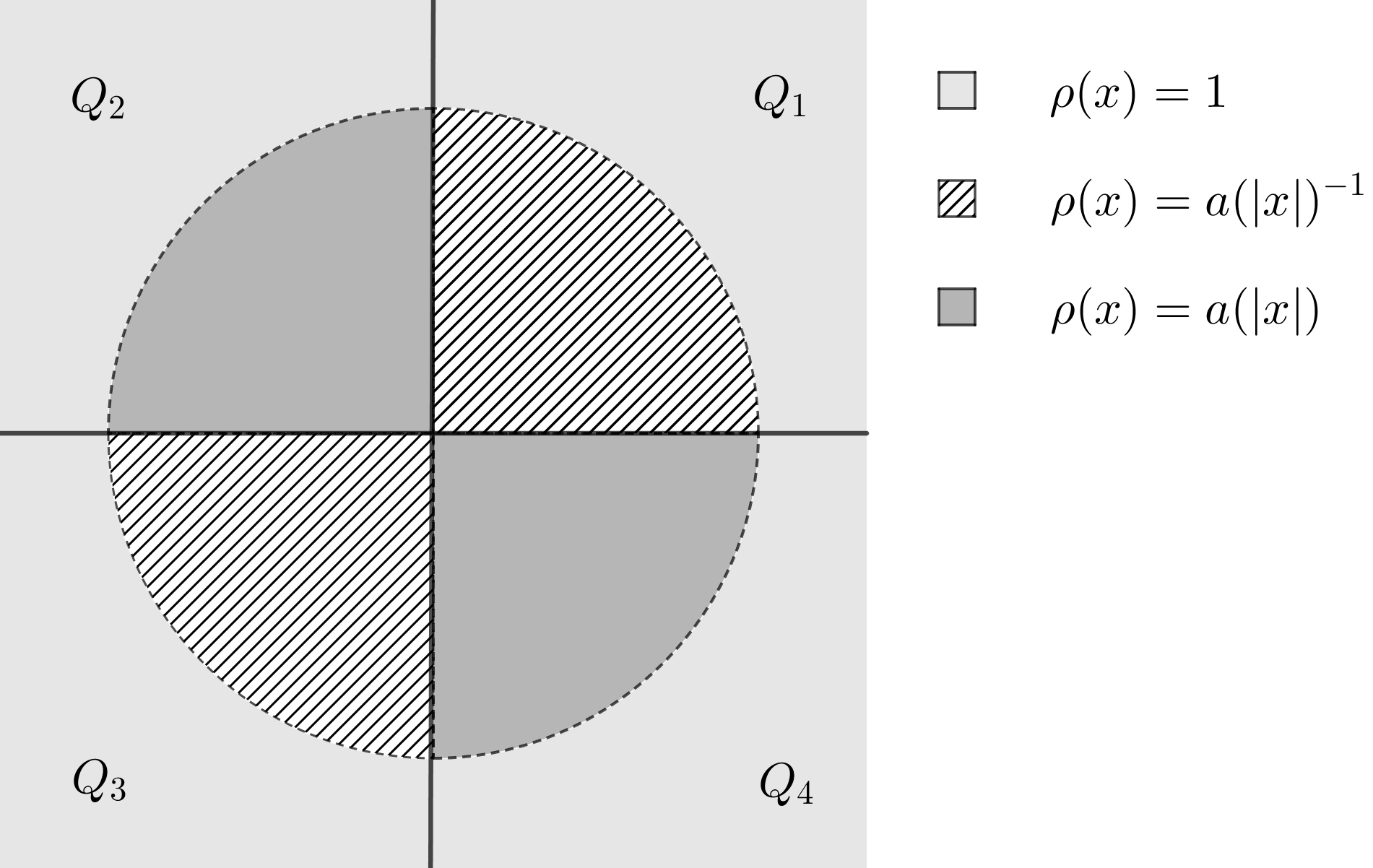}
\end{figure}

Let  $\rho : \bbR^2\to [0,+\infty]$ be the non-negative weight defined by
\begin{equation}\label{def:weight_two}
 \rho(x)\;\ldef \;\left \{
 \begin{array}{ll}
  a{(|x|)}^{-1}\;, & \quad x\in(Q_1\cup Q_3) \cap B(0,1), \\
  a(|x|)\;,        & \quad x\in(Q_2\cup Q_4) \cap B(0,1), \\
  1\;,             & \quad \mbox{otherwise.}
 \end{array}
 \right.
\end{equation}
\begin{assumption}
 The function $a: ]0,1] \to [0,\infty]$ satisfies the following conditions:
 \begin{itemize}
  \item[i)] for every $\epsilon>0$ there is a positive constant $c(\epsilon)>0$ such that
        \begin{equation}\label{ass:onepointdeg}
         a(r),\;a{(r)}^{-1}\geq c(\epsilon),\quad \forall r>\epsilon;
        \end{equation}
  \item[ii)]
        \begin{equation}\label{ass:integrability}
         \int_0^1 \frac{a(r)}{r}\,dr<\infty, \quad \int_0^1 \frac{r}{a(r)}\,dr <\infty;
        \end{equation}
  \item[iii)] there exists a constant $K>0$ such that for all $\epsilon \in (0,1)$ we have
        \begin{equation}\label{ass:onerank}
         \frac{1}{\epsilon^2}\int_0^\epsilon a^{-1}(r) r\int_0^r \frac{a(s)}{s}\,ds dr<K.
        \end{equation}
 \end{itemize}
\end{assumption}

\begin{remark} It follows immediately from~\eqref{ass:integrability}  and $\int_0^1 a(r) r\,dr \leq \int_0^1 a(r)/r\,dr$  that $\rho \in L^1_{\loc}(\bbR^2)$.
 Clearly this implies also that $\rho^{-1} \in L^1_{\loc}(\bbR^2)$ as $\rho^{-1}$ is obtained through a rotation of $\pi/2$ of $\rho$ around the origin.
\end{remark}

\begin{remark} Two functions $a:]0,1]\to \bbR$ that satisfy conditions~\eqref{ass:onepointdeg},~\eqref{ass:integrability} and~\eqref{ass:onerank} are
 $a(r) = r^{\alpha}$ with $0 < \alpha < 2$ and $a(r) = {(\log(2/r))}^{-\alpha}$ with $\alpha>1$.
 Notice that $x\mapsto |x|^{\alpha}$ with $\alpha\in (-2,2)$ belongs to $A_2$ but we will see in a moment that $\rho$ as in~\eqref{def:weight_two} with $a(r) = r^{\alpha}$ does not.
\end{remark}

\begin{remark}
There are other weights $\rho$ that can be considered for which $\cH \neq \cW$. For example, in~\eqref{def:weight_two} it is possible to replace $a(|x|)$ with a function $\lambda(x)$ bounded away from zero and infinity in $\bbR^2\setminus B(0,\epsilon)$ for all $\epsilon>0$, such that $\lambda,\lambda^{-1} \in L^1_{\loc}(\bbR^2)$ and
 \[
  \int_{B(0,1)} \frac{\lambda(x)}{|x|^2}\,dx < \infty.
 \]
 The proof that for this choice $\cH \neq \cW$ can be done along the same lines of what follows.
\end{remark}

Recall from the introduction that
\[
 \cE(u,v) :=\int_{\bbR^2}\nabla u \cdot \nabla v \;\rho dx,
\]
which is well defined for all $u,v$ belonging to
\begin{equation}\label{def:W}
 \cW := \left \{ u\in W^{1,1}_{\loc}(\bbR^2)\; : \; \cE_1(u,u) <\infty \right \}.
\end{equation}
Also recall that $\cE_1 = \cE + \langle \cdot, \cdot\rangle$ where $\langle \cdot, \cdot\rangle$ denotes the scalar product in $L^2(\bbR^d,\rho dx)$.
Since $\rho,\rho^{-1}\in L^1_{\loc}(\bbR^2)$,  $(\cW,\cE_1)$ is an Hilbert space and  $C_0^\infty(\bbR^2)\subset \cW$.
We denote by $\cH$ the closure of $C_0^\infty(\bbR^2)$ in $\cW$ and by $\cH_0$ the closure of $C_0^\infty(\bbR^2\setminus \{0\})$ in $\cW$.
Clearly,
\begin{equation}\label{eq:inclusions}
 \cH_0\subseteq \cH\subseteq \cW.
\end{equation}

\subsection{Description of the spaces $\cH_0$, $\cH$ and $\cW$}

In the interest of characterizing the inclusions~\eqref{eq:inclusions}, we will proceed  by defining functionals $\ell^+ : \cW\to \bbR$ and  $\ell^- : \cW\to \bbR$ which heuristically are mappings that take functions in $\cW$ to their trace at the origin ``looking'' from $Q_1$ and $Q_3$ respectively.
In what comes next we follow closely~\cite[Section 9.3]{zhikov2011variational} which considered the case $a(r) = r^\alpha$.

We start by properly defining $\ell^+$ and $\ell^-$.
Consider a function $u\in \cW$ and write $u(r,\theta)$ for its representation in polar coordinates. Since $u\in W^{1,1}_{\loc}(\bbR^2)$, then $u$ is also weakly
differentiable with respect to the polar coordinates, in particular we can
choose a version of $u$ which is absolutely continuous in $r$ for almost all
$\theta$. From now on fix one such version. We set
\begin{equation}\label{eq:ubar}
 \bar{u}(r) \ldef \frac{2}{\pi} \int_0^{\pi/2} u(r,\theta) d\theta,
\end{equation}
We remark that $\bar{u}(r)$ does not depend on the version of $u$ which we have just chosen.
By Fubini's theorem and the absolute continuity of $u$ in $r$ we have
\[
 \bar{u}(r_2) - \bar{u}(r_1)  = \frac{2}{\pi} \int_{r_1}^{r_2} \int_0^{\pi/2} \partial_r u(r,\theta) \,d\theta dr.
\]
A simple calculation using Cauchy-Schwartz inequality and that $|\partial_r u|\leq |\nabla u|$ shows that for all $r_1,r_2 \in (0,1]$
\begin{equation}\label{eq:ell_cont}
 | \bar{u}(r_2) - \bar{u}(r_1) |^2 \leq  \frac{2}{\pi} \left(  \int_{r_1}^{r_2} \frac{a(r)}{r} dr \right)\left(  \int_{Q_1} |\nabla u|^2 \rho \,dx \right).
\end{equation}
Assumption~\eqref{ass:integrability} together with~\eqref{eq:ell_cont} implies that $\bar{u}$ is continuous on $(0,1)$ and also that the limit $\lim_{r\to 0} \bar{u}(r)$ exists. We define $\ell^+(u)\ldef \lim_{r\to 0} \bar{u}(r)$ and we note that for all $s\in (0,1]$
\begin{equation}\label{eq:ellplus}
 |\bar{u}(s) - \ell^+(u)|^2 \leq \frac{2}{\pi} \left(  \int_{0}^{s} \frac{a(r)}{r} dr \right)\left(  \int_{Q_1} |\nabla u|^2 \rho \,dx \right).
\end{equation}
From~\eqref{eq:ellplus} we deduce that the map $\ell^+ : \cW\to \bbR$ is linear and continuous. With the same spirit, we define $\ell^{-}: \cW\to \bbR$ by
\[
 \ell^-(u)\ldef \lim_{r\to 0}  \int_0^{\pi/2} u(r,\theta+\pi) d\theta.
\]
Clearly, $\ell^{-}$ has the same properties of $\ell^+$ and satisfies an inequality similar to~\eqref{eq:ellplus} with $Q_3$ in place of $Q_1$.
\begin{lemma}\label{lem:ellonH}
 The maps $\ell^{+},\ell^{-}:\cW \to \bbR$ are linear and continuous. Moreover, for all $u\in \cH$ we have $\ell^{+}(u) = \ell^{-}(u)$.
\end{lemma}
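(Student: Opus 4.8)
The first sentence of the lemma has already been secured by the discussion preceding it, so in the proof I would only recall it briefly and then concentrate on the identity $\ell^+(u)=\ell^-(u)$ on $\cH$. Linearity of $\ell^+$ and $\ell^-$ is immediate from the defining average \eqref{eq:ubar} and the fact that both functionals are limits of linear operations. Continuity is exactly the content of \eqref{eq:ellplus} (and its $Q_3$-analogue), once the reference value $\bar u(s)$ is itself controlled by $\cE_1(u,u)^{1/2}$: on an annulus such as $\{1/2\le|x|\le1\}$ the weight $\rho$ is bounded above and below by \eqref{ass:onepointdeg}, so averaging \eqref{eq:ellplus} in $s$ over this annulus and using Cauchy--Schwarz bounds $|\ell^+(u)|$ by a constant times $\cE_1(u,u)^{1/2}$. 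The genuinely new statement is the equality on $\cH$, and my strategy is the standard one: verify it on the dense class $C_0^\infty(\bbR^2)$ and pass to the limit by continuity.

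First I would check the identity for a test function $u\in C_0^\infty(\bbR^2)$. Such a $u$ is continuous at the origin, hence $u(r,\theta)\to u(0)$ uniformly in $\theta$ as $r\to0$. Consequently the angular average $\bar u(r)=\frac{2}{\pi}\int_0^{\pi/2}u(r,\theta)\,d\theta$ converges to $u(0)$, which gives $\ell^+(u)=u(0)$; the analogous average over the sector $Q_3$ converges to the same value, so that $\ell^-(u)=u(0)$ as well. In particular $\ell^+(u)=\ell^-(u)=u(0)$ for every $u\in C_0^\infty(\bbR^2)$.

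Now let $u\in\cH$ be arbitrary. By definition $\cH$ is the closure of $C_0^\infty(\bbR^2)$ in $(\cW,\cE_1)$, so I would pick $u_n\in C_0^\infty(\bbR^2)$ with $\cE_1(u_n-u,u_n-u)\to0$. By the continuity of $\ell^+$ and $\ell^-$ recorded above, $\ell^+(u_n)\to\ell^+(u)$ and $\ell^-(u_n)\to\ell^-(u)$. Since $\ell^+(u_n)=\ell^-(u_n)$ for every $n$ by the previous step, letting $n\to\infty$ yields $\ell^+(u)=\ell^-(u)$, as claimed.

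The argument is essentially routine, and the only step requiring care is the continuity of the two functionals, which is precisely where Assumption~\eqref{ass:integrability} enters, through the finiteness of $\int_0^1 a(r)/r\,dr$ appearing in \eqref{eq:ellplus}. Once continuity is in hand, the density step is immediate. I would emphasize that the equality $\ell^+=\ell^-$ on $\cH$ is exactly the constraint that smooth approximation forces upon the traces at the origin, and that the interest of the example lies in the fact, to be established next, that a general element of $\cW$ need not satisfy it --- this is what will separate $\cH$ from $\cW$.
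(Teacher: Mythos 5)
Your proposal is correct and follows essentially the same route as the paper: the paper's proof likewise notes that linearity and continuity were already established in the preceding discussion, verifies $\ell^+(u)=\ell^-(u)=u(0)$ on $C_0^\infty(\bbR^2)$, and extends to $\cH$ by density and continuity. Your extra remark on bounding $|\ell^+(u)|$ by $\cE_1(u,u)^{1/2}$ via averaging over an annulus is a sensible (and slightly more explicit) justification of the continuity claim than the paper provides.
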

\begin{proof}
 The continuity and linearity was proven above. For the second part, it suffices to notice that  $\ell^{+}(u) = \ell^{-}(u)$ for all $u\in C_0^\infty(\bbR^2)$ which extends to $u\in\cH$ by continuity.
\end{proof}

By making use of the lemma above, we are now able to prove the following Proposition.

\begin{prop}\label{prop:nonreg} $\rho:\Omega \to [0,\infty]$ defined in~\eqref{def:weight_two} and satisfying~\eqref{ass:onepointdeg},~\eqref{ass:integrability}  is not regular, that is $\cW \neq \cH$.
\end{prop}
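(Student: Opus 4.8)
The plan is to use Lemma~\ref{lem:ellonH} as the whole engine of the proof: every $u\in\cH$ satisfies $\ell^{+}(u)=\ell^{-}(u)$, so it is enough to produce a single function $u\in\cW$ with $\ell^{+}(u)\neq\ell^{-}(u)$. Any such $u$ is automatically outside $\cH$, which forces $\cH\subsetneq\cW$.

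I would construct $u$ in polar coordinates by separating variables, $u(r,\theta)=\eta(r)\,\phi(\theta)$. Take $\eta:[0,\infty)\to[0,1]$ smooth with $\eta\equiv1$ on $[0,1/2]$ and $\eta\equiv0$ on $[1,\infty)$, so that $u$ is supported in $B(0,1)$ and coincides with $\phi(\theta)$ near the origin. Take $\phi$ to be a Lipschitz, $2\pi$-periodic angular profile equal to $1$ on the closed quadrant $Q_1$ and to $0$ on the closed quadrant $Q_3$, interpolating monotonically across $Q_2$ and across $Q_4$. The point of this choice is that $\phi$ is constant, hence $\phi'=0$, exactly on the two quadrants $Q_1\cup Q_3$ where $\rho=a(|x|)^{-1}$ is singular, while the entire angular variation of $u$ is routed through $Q_2\cup Q_4$, where $\rho=a(|x|)$ is small.

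Then I would check $u\in\cW$. As $u$ is bounded, compactly supported and $\rho\in L^1_{\loc}(\bbR^2)$, we get $\int_{\bbR^2}|u|^2\rho\,dx<\infty$. For the energy, $|\nabla u|^2=\eta'(r)^2\phi(\theta)^2+r^{-2}\eta(r)^2\phi'(\theta)^2$; the radial part lives on $\{r\in[1/2,1]\}$, where~\eqref{ass:onepointdeg} bounds $a$ and $a^{-1}$ from above and below, so it is finite, while the angular part lives on $Q_2\cup Q_4$ and is controlled by
\[
 \int_{Q_2\cup Q_4} r^{-2}\eta(r)^2\phi'(\theta)^2\,a(r)\,r\,dr\,d\theta \leq \|\phi'\|_\infty^2\int_0^1\frac{a(r)}{r}\,dr<\infty
\]
thanks to~\eqref{ass:integrability}. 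The main obstacle is to justify $u\in W^{1,1}_{\loc}(\bbR^2)$ despite the fact that $u$ is discontinuous at the origin (its radial limit depends on $\theta$). I would resolve this by a removability argument: $u$ is Lipschitz off the origin with $\nabla u\in L^1_{\loc}(\bbR^2)$ (because $r^{-1}\in L^1_{\loc}(\bbR^2)$ in the plane), and inserting a cutoff $\chi_\varepsilon$ that vanishes on $B(0,\varepsilon)$, equals $1$ off $B(0,2\varepsilon)$ and has $|\nabla\chi_\varepsilon|\leq C\varepsilon^{-1}$ into the integration-by-parts identity produces an error term bounded by $C\varepsilon^{-1}\|u\|_\infty\cdot|B(0,2\varepsilon)|=O(\varepsilon)\to0$. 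Hence the a.e.\ gradient is the weak gradient and $u\in\cW$.

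Finally I would read off the traces. Since $\eta\equiv1$ near the origin, on $Q_1$ one has $u(r,\theta)=\phi(\theta)=1$, whence $\bar u(r)=1$ and $\ell^{+}(u)=1$; on $Q_3$ one has $u(r,\theta+\pi)=0$, whence $\ell^{-}(u)=0$. Thus $\ell^{+}(u)=1\neq0=\ell^{-}(u)$, so by Lemma~\ref{lem:ellonH} $u\notin\cH$ although $u\in\cW$, and therefore $\cW\neq\cH$.
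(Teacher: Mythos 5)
Your proof is correct and follows essentially the same route as the paper: the paper also exhibits a single function of the form (radial cutoff)\,$\times$\,(angular profile equal to $1$ on $Q_1$, $0$ on $Q_3$, interpolated across $Q_2\cup Q_4$), namely $\psi_0(x)=(1-|x|^2)_+\psi(x)$ with $\psi$ as in~\eqref{def:psi}, checks $\psi_0\in\cW$ via~\eqref{ass:integrability}, and concludes $\psi_0\notin\cH$ from Lemma~\ref{lem:ellonH}. Your construction is a cosmetic variant of $\psi_0$, and your extra care with the $W^{1,1}_{\loc}$ removability at the origin fills in a detail the paper leaves as ``easily checked.''
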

\begin{proof} It suffices to provide a function in $\cW\setminus \cH$. The
 following example is well known in the literature (see~\cite[section 5]{zhikov1998weighted}), we consider
 \begin{equation}\label{def:psi}
  \psi(x)\ldef\left \{
  \begin{array}{lll}
   1\;,        & x\in Q_1, \\
   x_2/|x| \;, & x\in Q_2, \\
   0\;,        & x\in Q_3, \\
   x_1/|x| \;, & x\in Q_4,
  \end{array}
  \right.
 \end{equation}
 and set
 \[
  \psi_0(x) = {(1-|x|^2)}_+ \psi(x).
 \]
 It can be easily checked that $\psi_0\in \cW$ thanks to~\eqref{ass:integrability}. It is clear that $\ell^{+}(\psi_0) = 1$ and
 $\ell^{-}(\psi_0)=0$. In view of Lemma~\ref{lem:ellonH} it follows that $\psi_0\notin \cH$.
\end{proof}

The careful reader may have observed that in Proposition~\ref{prop:nonreg} we did not assume~\eqref{ass:onerank}. This will be needed below to prove that $\cW$ is obtained from $\cH$ by adding the sole function $\psi_0$ defined in~\eqref{def:psi}.

\begin{prop}\label{prop:ells} Assume~\eqref{ass:onepointdeg},~\eqref{ass:integrability} and~\eqref{ass:onerank}. An element $u\in \cW$ belongs to $\cH$ if and only if $\ell^+(u) = \ell^-(u)$.
 Moreover, an element $u\in \cW$ belongs to $\cH_0$ if and only if $\ell^+(u) = \ell^-(u) = 0$.
\end{prop}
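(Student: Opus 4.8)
The two ``only if'' directions are immediate from Lemma~\ref{lem:ellonH}: if $u\in\cH$ then $\ell^+(u)=\ell^-(u)$, and if $u\in\cH_0$ then, since every $f\in C_0^\infty(\bbR^2\setminus\{0\})$ has $\bar f$ vanishing near the origin and $\ell^\pm$ are $\cE_1$-continuous, $\ell^+(u)=\ell^-(u)=0$. So the whole content lies in the converse implications, and I would reduce both to the single statement that \emph{if $v\in\cW$ is bounded with $\ell^+(v)=\ell^-(v)=0$, then $v\in\cH_0$.} Granting this, the proposition follows by soft arguments. For the second assertion, take a general $v\in\cW$ with $\ell^+(v)=\ell^-(v)=0$ and approximate it in $\cE_1$ by its truncations $v_k=(-k)\vee(v\wedge k)$, as in Proposition~\ref{prop:onepoint} via \cite[Theorem 1.4.2]{fukushima2011dirichlet}. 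By continuity of $\ell^\pm$ the numbers $c_k^\pm:=\ell^\pm(v_k)$ tend to $0$, so the corrected functions $\tilde v_k:=v_k-c_k^-\chi-(c_k^+-c_k^-)\psi_0$, where $\chi\in C_0^\infty(\bbR^2)$ equals $1$ near the origin (hence $\ell^\pm(\chi)=1$) and $\psi_0$ is the bounded function of~\eqref{def:psi} (hence $\ell^+(\psi_0)=1$, $\ell^-(\psi_0)=0$), are bounded, satisfy $\ell^\pm(\tilde v_k)=0$, and converge to $v$ in $\cE_1$. The boxed claim places each $\tilde v_k$ in $\cH_0$, which is closed, so $v\in\cH_0$. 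For the first assertion, if $\ell^+(u)=\ell^-(u)=\lambda$ then $u-\lambda\chi\in\cW$ has vanishing $\ell^\pm$, hence lies in $\cH_0\subseteq\cH$, and therefore $u\in\cH$.

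To prove the reduced claim I would run the weak-compactness scheme of Proposition~\ref{prop:onepoint}, but with cutoffs adapted to the weight rather than to capacity (the origin here has \emph{positive} capacity, so cutoffs of small Dirichlet energy are unavailable). Fix smooth radial $\chi_\epsilon$ with $\chi_\epsilon\equiv0$ on $\{|x|\le\epsilon\}$, $\chi_\epsilon\equiv1$ on $\{|x|\ge2\epsilon\}$ and $|\chi_\epsilon'|\le C/\epsilon$. Then $\chi_\epsilon v$ vanishes near the origin, and by cutting it off at infinity and mollifying on $\{|x|\ge\epsilon\}$, where~\eqref{ass:onepointdeg} makes the weight bounded above and below, one sees $\chi_\epsilon v\in\cH_0$. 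For any $g\in\cW$, dominated convergence gives $\langle\chi_\epsilon v-v,g\rangle\to0$ and $\int(\chi_\epsilon-1)\nabla v\cdot\nabla g\,\rho\,dx\to0$, so the only term to control in $\cE_1(\chi_\epsilon v-v,g)$ is $\int v\,\nabla\chi_\epsilon\cdot\nabla g\,\rho\,dx$. The plan is to show $v\,\nabla\chi_\epsilon\rightharpoonup0$ weakly in $L^2(\rho\,dx)$, which will hold as soon as $\|v\,\nabla\chi_\epsilon\|_{L^2(\rho\,dx)}$ is bounded uniformly in $\epsilon$: indeed $v\,\nabla\chi_\epsilon$ is supported in $B(0,2\epsilon)$, and $\rho\in L^1_{\loc}$ forces $\|\nabla g\|_{L^2(B(0,2\epsilon),\rho\,dx)}\to0$. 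Then $\chi_\epsilon v\rightharpoonup v$ weakly in $(\cW,\cE_1)$, and since $\cH_0$ is a closed, hence weakly closed, subspace, $v\in\cH_0$.

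The crux---and the only place where~\eqref{ass:onerank} is used---is the uniform bound on $\int v^2|\nabla\chi_\epsilon|^2\rho\,dx$. On $Q_2\cup Q_4$ the weight is $a(|x|)$ and $v$ is bounded, so that part is at most a constant times $\epsilon^{-2}\int_\epsilon^{2\epsilon}a(r)\,r\,dr\le4\int_\epsilon^{2\epsilon}a(r)/r\,dr\to0$ by~\eqref{ass:integrability}. On $Q_1\cup Q_3$ the weight is $a(|x|)^{-1}$; passing to polar coordinates and decomposing $v(r,\cdot)$ into its angular mean and its oscillation, the oscillation part is controlled by the finite angular Dirichlet energy, while for the mean I would invoke~\eqref{eq:ellplus} and its $Q_3$ analogue, which with $\ell^\pm(v)=0$ yield $|\bar v(r)|^2\lesssim A(r)\int_{Q_1}|\nabla v|^2\rho\,dx$ with $A(r):=\int_0^r a(s)/s\,ds$. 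This bounds the mean part by a multiple of
\[
 \frac{1}{\epsilon^2}\int_0^{2\epsilon}a^{-1}(r)\,r\,A(r)\,dr,
\]
which is precisely the quantity controlled by~\eqref{ass:onerank} at scale $2\epsilon$. I expect this estimate to be the main obstacle: it is exactly the point where the vanishing of the trace compensates the positive capacity of the origin, turning what would otherwise be a divergent cutoff energy into a bounded one. Once it is established, the weak-compactness argument closes the proof as in Proposition~\ref{prop:onepoint}.
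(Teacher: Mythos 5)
Your proposal is correct and follows essentially the same route as the paper's proof: reduce by truncation and subtraction of $\psi_0$ and a bump function to a bounded element with vanishing traces, cut off near the origin, and obtain weak $\cE_1$-convergence from a uniform bound on $\int v^2|\nabla\chi_\epsilon|^2\rho\,dx$, proved on $Q_2\cup Q_4$ by boundedness and~\eqref{ass:integrability} and on $Q_1\cup Q_3$ by the mean/oscillation split, the angular Poincar\'e inequality, and~\eqref{eq:ellplus} combined with~\eqref{ass:onerank}. The only (cosmetic) difference is that you verify weak convergence by pairing directly with an arbitrary $g\in\cW$ rather than extracting a weakly convergent subsequence of the gradients.
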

\begin{proof} Let $u\in \cW$ be such that  $\ell^+(u) = \ell^-(u)$, we want to prove that $u\in\cH$. In view of~\eqref{ass:onepointdeg}, we can assume that $u\in \cW$ is compactly supported in $B(0,1)$.
 We can also assume that $u$ is bounded and $\ell^+(u) = 0$ by considering
 \[
  v_k(x) \ldef u_k(x) - \psi_0(x) \Big(\ell^+(u_k) - \ell^-(u_k)\Big) -\eta(x) \ell^{-}(u_k)
 \]
 in place of $u$ where $\eta\in C_0^\infty(B(0,1))$, $\eta(0)=1$ and $u_k=-k\vee(u\wedge k)$. Indeed, if $v_k\in \cH$ then $v_k$ converges  to  $u-\eta(x) \ell^{-}(u)\in\cH$ with respect to $\cE_1$ and thus also $u\in \cH$.

 We now show that $u$ can be approximated with functions in $\cH_0$  with respect to $\cE_1$. Consider for all $\epsilon>0$ a smooth function $\phi_\epsilon$ with values in $[0,1]$ such that $\phi\equiv 0$ for $|x|\leq \epsilon$, $\phi\equiv 1$ for $| x|>2\epsilon$ and  $|\nabla \phi_\epsilon| \leq 2/\epsilon$. Set $u_\epsilon \ldef \phi_\epsilon u $.
 Since $\supp [u_\epsilon] \subset \bbR^2\setminus B(0,\epsilon)$, the density of smooth functions in the
 classical Sobolev space implies that $u_\epsilon \in \cH_0$.
 By the dominated convergence theorem we readily obtain that $ u_\epsilon \to u$
 in $L^2(\bbR^2,\mu)$. To conclude that $u$ is in $\cH_0$ it suffices to check the weak convergence of
 gradients
 \[
  \nabla u_\epsilon =   u\nabla \phi_\epsilon + \phi_\epsilon \nabla u \rightharpoonup \nabla u,\quad \mbox{ in } L^2(\bbR^2,\mu),
 \]
 which can be obtained, up to extracting a subsequence, if we show that $\nabla u_\epsilon$ are uniformly bounded in $L^2(\bbR^2,\mu)$.

 Since $\phi_\epsilon \nabla u$ is trivially bounded in $L^2(\bbR^2,\mu)$
 uniformly in $\epsilon$, we can focus on $u \nabla \phi_\epsilon$. Moreover, we
 can restrict our analysis on $Q_1$ and $Q_2$, being the argument for $Q_3$ and
 $Q_4$ similar.

 Let us start with $Q_1$. We consider $\bar{u}(r)$ as defined in~\eqref{eq:ubar} and set $v(x) \ldef u(x)-\bar{u}(r)$. Then, for a constant $C>0$ that does not depend on $\epsilon$ and may change from line to line, we have
 \begin{align*}
  \int_{Q_1} |u \nabla\phi_\epsilon |^2 & a^{-1}(|x|)\,dx   \leq \frac{C}{\epsilon^2}  \int_{\epsilon}^{2\epsilon} \int_0^{\pi/2}
  |u(r,\theta)|^2 a^{-1}(r) r\,d\theta dr                                                                                                                                                                                                  \\
                                        & \leq \frac{C}{\epsilon^2}  \int_{\epsilon}^{2\epsilon} |\bar{u}(r)|^2 a^{-1}(r)r\,dr +  \frac{C}{\epsilon^2}\int_{\epsilon}^{2\epsilon} \int_{0}^{\pi/2} |v(r,\theta)|^2 a^{-1}(r)r\,d\theta dr.
 \end{align*}
 We observe that $\frac{2}{\pi} \int_{0}^{\pi/2}v(r,\theta) \,d\theta = 0$. Hence, an application of Poincar\'e's inequality yields
 \begin{equation}\label{eq:poincare}
  \int_{0}^{\pi/2} |v(r,\theta)|^2\,d\theta \leq  C \int_{0}^{\pi/2} |\partial_\theta v(r,\theta)|^2 \,d\theta \leq C \int_{0}^{\pi/2} r^2 |\nabla u|^2\,d\theta,
 \end{equation}
 in view of $|\nabla u|^2 = |\partial_r u|^2 + |\partial_\theta u|^2/r^2$ and $\partial_\theta u = \partial_\theta v$. Integrating~\eqref{eq:poincare} against $a^{-1}(r)r$ on $(\epsilon,2\epsilon)$, and using that $r^2\leq 4\epsilon^2$ allows for the bound
 \[
  \frac{1}{\epsilon^2}\int_{\epsilon}^{2\epsilon} \int_{0}^{\pi/2} |v(r,\theta)|^2 a^{-1}(r)r\,d\theta dr \leq C \int_{Q_1} |\nabla u|^2\rho \,dx.
 \]
 Furthermore, an application of`\eqref{eq:ellplus} gives
 \[
  \frac{1}{\epsilon^2}  \int_{\epsilon}^{2\epsilon} |\bar{u}(r)|^2 a^{-1}(r)r\,dr \leq \frac{C}{\epsilon^2}\left( \int_{\epsilon}^{2\epsilon}a^{-1}(r)r
  \int_{0}^{r} \frac{a(y)}{y}\, dy dr\right) \left( \int_{Q_1} |\nabla u|^2\rho \,dx \right),
 \]
 thus the uniform bound follows from~\eqref{ass:onerank}.

 Let us now proceed with $Q_2$. Exploiting the boundness of $u$ we get
 \begin{align*}
  \int_{Q_2} |u \nabla\phi_\epsilon |^2 & a(|x|)\,dx   \leq \frac{C}{\epsilon^2} \int^{2\epsilon}_{\epsilon}  a(r)r\,dr \leq  C \int^{2\epsilon}_{\epsilon}  \frac{a(r)}{r}\,dr
 \end{align*}
 which is uniformly bounded in $\epsilon\in (0,1)$ by~\eqref{ass:integrability}.
\end{proof}

\begin{corro}\label{corro:codimension} $\cH_0$ has codimension one in $\cH$ and $\cH$ has codimension one in $\cW$. More precisely, $\cW = \cH+\bbR \psi_0$.
\end{corro}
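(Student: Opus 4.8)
The plan is to realize all three spaces as kernels of the continuous linear functionals $\ell^+,\ell^-$ supplied by Lemma~\ref{lem:ellonH} and Proposition~\ref{prop:ells}, after which the codimension statements become elementary linear algebra in the Hilbert space $(\cW,\cE_1)$.

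First I would set $\ell \ldef \ell^+ - \ell^-$, which is continuous and linear by Lemma~\ref{lem:ellonH}. Proposition~\ref{prop:ells} states exactly that $\cH = \ker \ell$. Since $\ell(\psi_0) = \ell^+(\psi_0) - \ell^-(\psi_0) = 1 - 0 = 1 \neq 0$, the functional $\ell$ is not identically zero, so $\cH$ is a closed hyperplane in $\cW$, that is, it has codimension one. For the explicit decomposition, given $u \in \cW$ I would write $u = (u - \ell(u)\psi_0) + \ell(u)\psi_0$; the first summand lies in $\ker \ell = \cH$ because $\ell(u - \ell(u)\psi_0) = \ell(u) - \ell(u)\ell(\psi_0) = 0$, and the second lies in $\bbR\psi_0$. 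Hence $\cW = \cH + \bbR\psi_0$, and the sum is direct since $\psi_0 \notin \cH$ by Proposition~\ref{prop:nonreg}.

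For the inclusion $\cH_0 \subseteq \cH$, I would restrict attention to $\cH$, where $\ell^+ = \ell^-$ by Lemma~\ref{lem:ellonH}. By the second part of Proposition~\ref{prop:ells}, a function $u \in \cH$ lies in $\cH_0$ if and only if $\ell^+(u) = \ell^-(u) = 0$, which on $\cH$ reduces to $\ell^+(u) = 0$; thus $\cH_0 = \ker(\ell^+|_{\cH})$. It remains to check that $\ell^+|_{\cH}$ is not the zero functional. For this I would take any $\eta \in C_0^\infty(B(0,1)) \subset \cH$ with $\eta(0) = 1$: since $\eta$ is continuous at the origin, $\bar{\eta}(r) \to 1$ as $r \to 0$, so $\ell^+(\eta) = 1 \neq 0$. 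Consequently $\cH_0$ is a closed hyperplane in $\cH$, of codimension one, and in fact $\cH = \cH_0 \oplus \bbR\eta$ by the same splitting argument as above.

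There is no serious obstacle here: all the analytic work has already been done in Proposition~\ref{prop:ells}, which identifies $\cH$ and $\cH_0$ with the kernels of $\ell$ and $\ell^+|_{\cH}$. The only points requiring care are verifying that these functionals are genuinely nonzero — witnessed by $\psi_0$ and by a smooth bump $\eta$ with $\eta(0)=1$ — and invoking continuity of $\ell^+,\ell^-$ to ensure the kernels are closed, so that \emph{codimension one} is meaningful in the Hilbert-space sense.
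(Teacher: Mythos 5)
Your proof is correct and follows essentially the same route as the paper: both arguments use Proposition~\ref{prop:ells} to write $u-\bigl(\ell^+(u)-\ell^-(u)\bigr)\psi_0\in\cH$ for the first codimension claim and subtract $\ell^+(u)\eta$ for a bump $\eta$ with $\eta(0)=1$ for the second. Your phrasing in terms of kernels of the continuous functionals $\ell^+-\ell^-$ and $\ell^+|_{\cH}$ is just a slightly more explicit packaging of the same argument.
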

\begin{proof}
 By Proposition~\ref{prop:ells} we see immediately that, given $u\in \cW$ and setting $\lambda := \ell^+(u)-\ell^-(u)$, the function $v = u - \lambda\psi_0$
 is in $\cH$ since $\ell^{+}(v) = \ell^{-}(v)$ and therefore $\cH$ has codimension one in $\cW$.
 Similarly, fix any $\eta \in C_0^\infty(\bbR^2)$ with $\eta(0) = 1$, then for any function $u\in \cH$, $u-\ell^+(u)\eta$, belongs to $\cH_0$ by Proposition~\ref{prop:ells}.
\end{proof}

From now on we will always assume that~\eqref{ass:onepointdeg},~\eqref{ass:integrability} and~\eqref{ass:onerank} are satisfied. It turns out that a better understanding of the functions contained in $\cH_0$ will be useful to study the Hunt process associated with the Dirichlet form $(\cE,\cH_0)$ on $L^2(\bbR^2, \mu)$.
In the next proposition we explicitly identify a class of functions that belong to $\cH_0$.

\begin{prop}\label{prop:domain} Set $S_1:= \bbR/2\pi\bbZ$, let $\eta \in C_0^\infty(\bbR^2)$ and  $f: S_1 \to \bbR$ be piecewise differentiable and such that $f\equiv 0$ on $(-\pi,-\pi/2)\cup(0,\pi/2)$.
 Then, the function defined by $u(0)\ldef 0$ and $u(x)\ldef \eta(x)f(\theta(x))$ for  $x\in\bbR^2\setminus \{ 0\}$ belongs to $\cH_0$.
\end{prop}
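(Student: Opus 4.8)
The plan is to apply Proposition~\ref{prop:ells}: once we know that $u\in\cW$, membership in $\cH_0$ reduces to checking the single condition $\ell^+(u)=\ell^-(u)=0$. So there are really two things to establish, and the second will turn out to be essentially free, forced by the support hypothesis on $f$.

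First I would show that $u\in\cW$. Since $\eta$ is smooth and compactly supported and $f$ is bounded (being continuous and piecewise $C^1$ on the compact $S_1$, which is the reading of ``piecewise differentiable'' that keeps $u$ in $W^{1,1}_{\loc}$, consistent with the model function $\psi$ of~\eqref{def:psi}), the function $u=\eta\,f(\theta)$ is bounded with compact support, hence $u\in L^2(\bbR^2,\mu)$ because $\rho\in L^1_{\loc}(\bbR^2)$. For weak differentiability I would observe that the hypothesis $f\equiv0$ on $(-\pi,-\pi/2)\cup(0,\pi/2)$---the angular ranges of $Q_3$ and $Q_1$---together with the continuity of $f$ forces $f$ to vanish at the four angles $0,\pi/2,\pi,3\pi/2$ separating the quadrants. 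Consequently $u$ is continuous across every quadrant boundary, is locally Lipschitz on $\bbR^2\setminus\{0\}$, and its classical gradient satisfies $|\nabla u|\lesssim r^{-1}(|f|+|f'|)$ near the origin, which belongs to $L^1_{\loc}(\bbR^2)$. Since a single point is removable for $W^{1,1}$ in the plane (as one sees with the cut-off $\phi_\epsilon$ already used in the proof of Proposition~\ref{prop:ells}, for which $\int|\nabla\phi_\epsilon|\,dx\to0$), this would give $u\in W^{1,1}_{\loc}(\bbR^2)$.

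The crux is the finiteness of $\cE(u,u)$. Writing $u(r,\theta)=\eta(r,\theta)f(\theta)$ and $|\nabla u|^2=|\partial_r u|^2+r^{-2}|\partial_\theta u|^2$, the decisive point is that $\supp f\subset Q_2\cup Q_4$, where $\rho(x)=a(|x|)$ near the origin---precisely the ``small'' weight. In polar coordinates $dx=r\,dr\,d\theta$, so I expect the radial part to contribute $\int|\partial_r\eta|^2|f|^2\,\rho\,dx\lesssim\int_0^1 a(r)\,r\,dr$ and the angular part to contribute $\int r^{-2}|\partial_\theta(\eta f)|^2\,\rho\,dx\lesssim\int_0^1 \tfrac{a(r)}{r}\,dr$, both finite by~\eqref{ass:integrability}; outside $B(0,1)$ one has $\rho\equiv1$ with everything bounded and compactly supported. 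This would give $\cE_1(u,u)<\infty$ and hence $u\in\cW$.

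Finally, the vanishing of the trace functionals is immediate: for $\theta\in(0,\pi/2)$ one has $u(r,\theta)=\eta\,f(\theta)=0$, so the average $\bar u(r)$ of~\eqref{eq:ubar} is identically zero and $\ell^+(u)=\lim_{r\to0}\bar u(r)=0$, and the same computation over $Q_3$ gives $\ell^-(u)=0$. Proposition~\ref{prop:ells} then yields $u\in\cH_0$. The only genuinely delicate step is the energy estimate: it is exactly the placement of $\supp f$ in the quadrants carrying the weight $a(|x|)$ that converts the dangerous angular term into $\int_0^1 a(r)/r\,dr$, which~\eqref{ass:integrability} controls; the remaining verifications are routine.
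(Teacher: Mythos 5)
Your proposal is correct and follows essentially the same route as the paper: reduce to $u\in\cW$ via Proposition~\ref{prop:ells} (the trace conditions $\ell^+(u)=\ell^-(u)=0$ being immediate from the support hypothesis on $f$), and then bound $\cE(u,u)$ by exploiting that $u$ vanishes on $Q_1\cup Q_3$ so that the angular term $r^{-2}|\partial_\theta f|^2$ is integrated only against the small weight $a(|x|)$, yielding $\int_0^1 a(r)/r\,dr<\infty$ by~\eqref{ass:integrability}. Your extra care about weak differentiability across the quadrant boundaries and the removability of the origin for $W^{1,1}_{\loc}$ is a welcome addition that the paper leaves implicit, but it does not change the argument.
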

\begin{proof} In view of Proposition~\eqref{prop:ells} it suffices to show that $u\in \cW$, since it is clear from the assumptions that $\ell^+(u) = \ell^-(u) = 0$.
 Since $u$ is bounded and compactly supported, $\|u\|_{2} <\infty$. Moreover, noticing that $u$ is zero on $Q_1\cup Q_3$, we have
 \begin{equation}\label{eq:L2bound}
  \int_{\bbR^2}|\nabla u |^2\,\rho dx \leq \int_{Q_2\cup Q_4} |f\nabla \eta|^2 a(|x|)\,dx + \int_{Q_2\cup Q_4} |\eta\nabla f|^2 a(|x|)\,dx,
 \end{equation}
 which is bounded by recalling that $|\nabla f|^2 = |\partial_{\theta} f|/r^2$ and
 thanks to~\eqref{ass:integrability}.
\end{proof}

In the sequel we want to study how the stochastic process associated to $(\cE,\cH)$ approaches the origin, for that we estimate the capacity of cones in $Q_2$ and $Q_4$ (see figure below). For $\epsilon,\delta>0$ we define using polar coordinates $x = (r,\theta)$ the two cones
\begin{equation}\label{eq:cones}
 A_{\epsilon,\delta}^+\ldef (0,\epsilon)\times(\pi/2+\delta,\pi-\delta),\quad A_{\epsilon,\delta}^-\ldef(0,\epsilon)\times(-\pi/2+\delta,-\delta),
\end{equation}
so that $A_{\epsilon,\delta}^-$ is obtained by $A_{\epsilon,\delta}^+$ with a rotation of angle $\pi$ around the origin.

\begin{figure}[ht]
 \centering
 \includegraphics[width=0.5\textwidth]{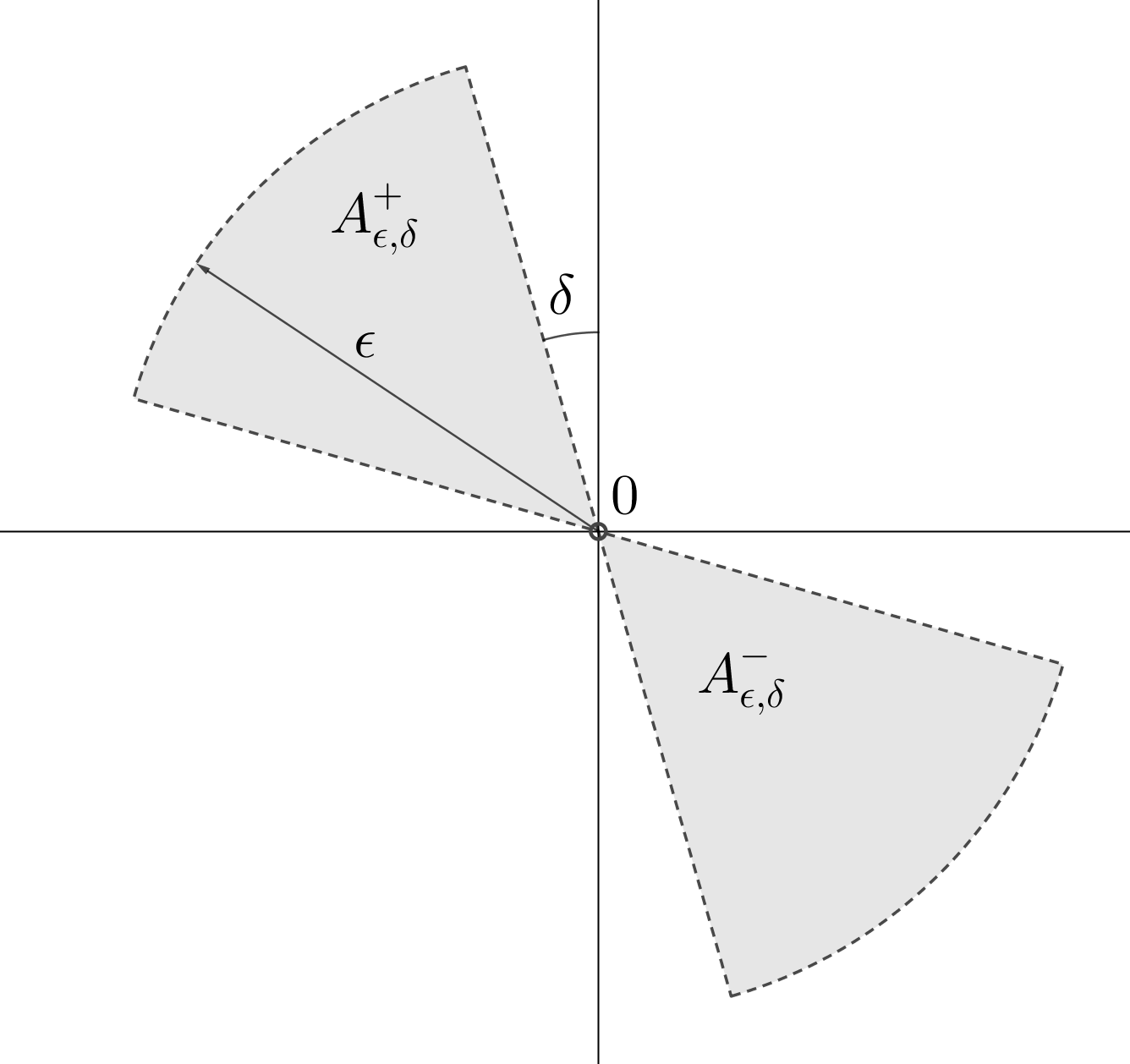}
\end{figure}

\begin{lemma} For all $\epsilon,\delta \in (0,1)$ it holds that
 \begin{equation}\label{ineq:capacityofcones}
  \capa_1(A_{\epsilon,\delta}^+)\leq \frac{C}{\delta} \int_{0}^{2\epsilon} \frac{a(r)}{r} \,dr,\quad
  \capa_1(A_{\epsilon,\delta}^-)\leq \frac{C}{\delta} \int_{0}^{2\epsilon} \frac{a(r)}{r} \,dr,
 \end{equation}
 for some constant $C$ independent of $\epsilon$ and $\delta$. Here the capacity is the one associated to $(\cE,\cH)$ on $L^2(\Omega,\mu)$.
\end{lemma}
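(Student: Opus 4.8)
The plan is to bound the capacity from above by exhibiting one explicit admissible test function for $A_{\epsilon,\delta}^+$ and estimating its energy; the bound for $A_{\epsilon,\delta}^-$ then follows from the rotation symmetry of $\rho$. Recall from~\eqref{def:capacity} that, since here the capacity is the one associated to $(\cE,\cH)$, it suffices to produce $u\in\cH$ with $u\ge 1$ $\mu$-a.e.\ on the cone and to check that $\cE_1(u,u)\le \frac{C}{\delta}\int_0^{2\epsilon}\frac{a(r)}{r}\,dr$.

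First I would take a product function $u(x)=\chi(|x|)\,f(\theta(x))$, where $\chi:[0,\infty)\to[0,1]$ is a smooth radial cutoff with $\chi\equiv1$ on $[0,\epsilon]$, $\chi\equiv0$ on $[2\epsilon,\infty)$ and $|\chi'|\le C/\epsilon$, and $f:S_1\to[0,1]$ is the piecewise-linear angular profile equal to $1$ on $(\pi/2+\delta,\pi-\delta)$, equal to $0$ outside $(\pi/2,\pi)$, ramping linearly on the two transition intervals of width $\delta$, so that $|f'|\le C/\delta$ and $\int |f'|^2\,d\theta\le C/\delta$. Since $f$ is supported in the angular sector of $Q_2$ it vanishes on $(-\pi,-\pi/2)\cup(0,\pi/2)$, and $\chi(|\cdot|)\in C_0^\infty(\bbR^2)$, so Proposition~\ref{prop:domain} guarantees $u\in\cH_0\subseteq\cH$; moreover $u\equiv 1$ on $A_{\epsilon,\delta}^+$, hence $u$ is admissible.

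Then I would compute $\cE_1(u,u)$ in polar coordinates, using $|\nabla u|^2=|\chi'(r)|^2 f(\theta)^2+r^{-2}\chi(r)^2 f'(\theta)^2$ together with $\rho=a(r)$ and volume element $r\,dr\,d\theta$ on the support $Q_2\cap B(0,1)$. The angular-derivative contribution factorizes as $\big(\int_0^{2\epsilon}\chi^2\tfrac{a(r)}{r}\,dr\big)\big(\int |f'|^2\,d\theta\big)\le \tfrac{C}{\delta}\int_0^{2\epsilon}\tfrac{a(r)}{r}\,dr$, which is exactly the target term. The radial-derivative contribution is at most $\tfrac{C}{\epsilon^2}\int_\epsilon^{2\epsilon}a(r)\,r\,dr$ (the angular factor $\int f^2\,d\theta$ being bounded by $\pi/2$), and the $L^2$ contribution is at most $C\int_0^{2\epsilon}a(r)\,r\,dr$. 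Both are dominated by $\int_0^{2\epsilon}\tfrac{a(r)}{r}\,dr$: on $(\epsilon,2\epsilon)$ one has $r/\epsilon^2\le 2/\epsilon\le 4/r$, so $\tfrac{1}{\epsilon^2}a(r)\,r\le 4\,a(r)/r$, while $r\le1$ gives $a(r)\,r\le a(r)/r$. Summing the three contributions yields $\cE_1(u,u)\le \tfrac{C}{\delta}\int_0^{2\epsilon}\tfrac{a(r)}{r}\,dr$, and the first inequality in~\eqref{ineq:capacityofcones} follows from the definition of $\capa_1$; the second follows by the rotation of angle $\pi$ carrying $A_{\epsilon,\delta}^+$ to $A_{\epsilon,\delta}^-$, which leaves $\rho$ invariant on $Q_2\cup Q_4$.

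The step I expect to carry the (mild) weight of the argument is the bookkeeping that keeps every term below the single quantity $\tfrac{C}{\delta}\int_0^{2\epsilon}\tfrac{a(r)}{r}\,dr$: one must verify that the radial-derivative and $L^2$ terms, which naturally come with the weight $a(r)\,r$, are reabsorbed into the angular term via $r\le 1$ and the scaling $r\sim\epsilon$ on the transition annulus. A minor point to address is the regime $2\epsilon>1$, where $\rho=1$ rather than $a(r)$ on $\{|x|>1\}$; there one either restricts to the interesting case $\epsilon\le 1/2$ or uses the convention $a\equiv 1$ on $(1,\infty)$ consistent with~\eqref{def:weight_two}, and the corresponding contributions are again $\lesssim \tfrac{1}{\delta}\int_0^{2\epsilon}\tfrac{a(r)}{r}\,dr$.
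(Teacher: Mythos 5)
Your proposal is correct and follows essentially the same route as the paper: an explicit test function equal to $1$ on $A_{\epsilon,\delta}^+$, supported in $A^+_{2\epsilon,0}$, with $|\partial_r u|\le C/\epsilon$ and $|\partial_\theta u|\le C/\delta$, whose energy is computed in polar coordinates and whose radial and $L^2$ contributions are absorbed into $\frac{C}{\delta}\int_0^{2\epsilon}\frac{a(r)}{r}\,dr$ via $1/\epsilon^2\le 4/r^2$ and $r\le 1$. Your explicit product form $\chi(|x|)f(\theta)$ and the appeal to Proposition~\ref{prop:domain} for admissibility in $\cH_0$ merely make precise what the paper asserts implicitly.
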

\begin{proof}We prove the inequality for $A_{\epsilon,\delta}^+$, being the argument for $A_{\epsilon,\delta}^-$ completely analogous.
 To find an upper bound on the capacity of the open set $A_{\epsilon,\delta}^+$, it suffices to compute the Dirichlet energy of a function which is in $\cH_0$ and which is equal to $1$ on $A_{\epsilon,\delta}^+$.
 Let $u = u(\theta,r)$ be equal to $1$ on $A_{\epsilon,\delta}^+$, such that $|\partial_\theta u|\leq C/\delta$, $|\partial_r u|\leq C/\epsilon$ for some constant $C>0$ and equal to zero on $\bbR^2\setminus A^+_{2\epsilon,0}$.
 Also we can clearly assume that $u$ takes values in $[0,1]$.
 Then, since $u$ is bounded by $1$ and supported in $A^+_{2\epsilon,0}$,
 \[
  \int_{\bbR^d} |u|^2 d\mu \leq \int_{A^+_{2\epsilon,0}} \rho \,dx =\frac{2}{\pi}\int_{0}^{2\epsilon} \frac{a(r)}{r} \,dr.
 \]
 We now observe that
 \begin{align*}
  \int_{\bbR^d} |\nabla u|^2 d\mu & = \int_{\pi/2}^{\pi} \int_0^{2\epsilon} \left(|\partial_r u|^2 + \frac{|\partial_\theta u|^2}{r^2}\right) a(r) r \,dr d\theta                                                             \\
                                  & = \int_{\pi/2}^{\pi} \int_0^{2\epsilon} |\partial_r u|^2 a(r) r \,dr d\theta + 2\int_0^{2\epsilon} \int_{\pi/2}^{\pi/2+\delta} \frac{\sup|\partial_\theta u|^2}{r^2} a(r) r \,dr d\theta.
 \end{align*}
 By using $ |\partial_r u| \leq C/\epsilon$ we can bound the first integral by
 \[
  \int_{\pi/2}^{\pi} \int_0^{2\epsilon} |\partial_r u|^2 a(r) r \,dr d\theta \leq  C^2 \frac{\pi}{2}  \int_0^{2\epsilon} \frac{a(r)}{\epsilon^2} r \,dr \leq 2 C^2\pi \int_0^{2\epsilon} \frac{a(r)}{r} \,dr
 \]
 where in the second inequality we used the fact that $1/\epsilon^2\leq 4/r^2$ in the domain of integration. We finally use $ |\partial_\theta u| \leq C/\delta$  to bound the second integral by
 \[
  \int_0^{2\epsilon} \int_{\pi/2}^{\pi/2+\delta} \frac{|\partial_\theta u|^2}{r^2} a(r) r \,dr d\theta \leq \int_0^{2\epsilon} \int_{0}^{\delta} \frac{C^2}{\delta^2} \frac{a(r)}{r} \,dr d\theta
  =  \frac{C^2}{\delta} \int_0^{2\epsilon}\frac{a(r)}{r} \,dr,
 \]
 and the conclusion of the lemma follows easily by putting all the estimates together.
\end{proof}

\subsection{The $\cH_0$- and $\cH$-processes}\label{sec:H0H}
Now that we have a better understanding of $\cH_0$, $\cH$ and $\cW$ under~\eqref{ass:onepointdeg},~\eqref{ass:integrability} and~\eqref{ass:onerank}, we are ready to discuss about the processes associated to them. We start with the Dirichlet form $(\cE, \cH)$ on $L^2(\bbR^2,\mu)$.

\begin{prop}\label{prop:properties} $(\cE, \cH)$ on $L^2(\bbR^2,\mu)$ is a strongly local regular Dirichlet form. Moreover, $(\cE,\cH)$ is recurrent and conservative.
\end{prop}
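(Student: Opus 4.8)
The plan is to treat the three structural assertions (strong locality, regularity, recurrence) and then deduce conservativeness as a consequence of recurrence.

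Strong locality I would simply inherit. As already observed in the introduction, $(\cE,\cW)$ is strongly local, and since $\cH\subseteq\cW$ is a closed subspace on which $\cE$ acts by the very same expression, the defining condition—that $\cE(u,v)=0$ whenever $u,v$ have compact support and $v$ is constant on a neighbourhood of $\supp[u]$—continues to hold for pairs in $\cH$. Regularity is essentially built into the definition of $\cH$. I would exhibit $\cC=C_0^\infty(\bbR^2)$ as a core: it lies in $\cH\cap C_0(\bbR^2)$, it is $\cE_1$-dense in $\cH$ by the very definition of $\cH$ as the $\cE_1$-closure of $C_0^\infty(\bbR^2)$, and it is uniformly dense in $C_0(\bbR^2)$ by standard mollification. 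Hence $\cC$ is a core and $(\cE,\cH)$ is regular.

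The substantive point is recurrence, and here I would produce an approximating sequence of the constant $1$ with asymptotically vanishing energy, exploiting that $\rho\equiv 1$ outside the unit ball and that the ambient dimension is two. Concretely, for $n\ge 2$ take the logarithmic cutoffs
\[
 u_n(x) = \begin{cases} 1, & |x|\le n,\\ 2 - \dfrac{\log|x|}{\log n}, & n\le |x|\le n^2,\\ 0, & |x|\ge n^2. \end{cases}
\]
Each $u_n$ is Lipschitz with compact support, equals $1$ near the origin, and has gradient supported in the annulus $\{n\le |x|\le n^2\}$ where $\rho\equiv 1$; thus $u_n\in\cW$. Because $u_n$ is constant near $0$ we have $\ell^+(u_n)=\ell^-(u_n)=1$, so $u_n\in\cH$ by Proposition~\ref{prop:ells}. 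A direct computation gives
\[
 \cE(u_n,u_n) = \int_{\{n\le|x|\le n^2\}} |\nabla u_n|^2\,dx = \frac{1}{(\log n)^2}\int_n^{n^2}\frac{2\pi}{r}\,dr = \frac{2\pi}{\log n}\longrightarrow 0 \quad (n\to\infty),
\]
while $0\le u_n\le 1$ and $u_n\to 1$ everywhere. Since $\cE(u_n-u_m,u_n-u_m)\le 2\cE(u_n,u_n)+2\cE(u_m,u_m)\to 0$, the sequence is $\cE$-Cauchy and converges $\mu$-a.e.\ to $1$, so $1$ lies in the extended Dirichlet space with $\cE(1,1)=0$. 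By the recurrence criterion \cite[Theorem 1.6.3]{fukushima2011dirichlet} the form $(\cE,\cH)$ is therefore recurrent, and conservativeness follows at once, every recurrent Dirichlet form being conservative \cite[Lemma 1.6.5]{fukushima2011dirichlet}.

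The only step requiring genuine care is the upgrade from $u_n\in\cW$ to $u_n\in\cH$: a priori the singularity of $\rho$ at the origin could obstruct approximation by smooth compactly supported functions, and this is exactly where the non-regularity of the weight lives. The obstruction is dissolved cleanly by the trace characterization of Proposition~\ref{prop:ells}, precisely because each $u_n$ is locally constant at $0$, so that $\ell^+(u_n)=\ell^-(u_n)$. Everything else—the energy computation and the dimension-two logarithmic decay—is routine.
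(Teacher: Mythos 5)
Your proof is correct and follows essentially the same route as the paper: the same logarithmic cutoffs $u_n$, the same recurrence criterion from \cite[Theorem 1.6.3]{fukushima2011dirichlet}, and conservativeness via \cite[Lemma 1.6.5]{fukushima2011dirichlet}. The one place you are more careful than the paper --- justifying $u_n\in\cH$ via the trace characterization of Proposition~\ref{prop:ells}, since $u_n$ is constant near the origin --- is a legitimate and welcome addition rather than a different approach.
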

\begin{proof} The proof of the fact that $(\cE, \cH)$ is a strongly local Dirichlet form is standard and we omit it here, we only mention that $\rho,\rho^{-1}\in L^1_{\loc}(\bbR^2)$ is required. Regularity is obvious as  by definition $\cH$ is the closure of $C_0^\infty(\bbR^2)$ in $(\cW,\cE_1)$.

 According to~\cite[Theorem 1.6.3]{fukushima2011dirichlet}, recurrence is equivalent to showing that there exists $u_n\in \cH$ such that $u_n\to 1$ almost surely and $\cE(u_n,u_n)\to 0$ as $n\to \infty$.
 It is immediate to verify that the sequence of functions
 \begin{equation*}
  u_n(x)\ldef\left \{
  \begin{array}{lll}
   1\;,                              & |x| \leq n,    \\
   \frac{2\log n-\log|x|}{\log n}\;, & n<|x|\leq n^2, \\
   0\;,                              & |x|>n^2
  \end{array}
  \right.
 \end{equation*}
 has the required property. This is not surprising, because by construction the process is a Brownian motion in $\{|x|>1\}$, which is recurrent in dimension $d=2$.
 Clearly, conservativeness follows directly from recurrence~\cite[Lemma 1.6.5]{fukushima2011dirichlet}.
\end{proof}

By Theorem~\ref{thm:huntprocess} there exists a Hunt process ${(\{X_t \}}_{t\geq0},P_x)$, $x\in \bbR^2$ which is uniquely determined up to a properly exceptional set and which is associated to $(\cE, \cH)$. Moreover, by means of the strong locality, this process can be taken to have continuous sample paths $P_x$-almost surely for all $x\in \bbR^2$.

We come now to the study of the process $(X,P_x)$ near the origin. We set
\begin{equation}\label{eq:hittingtime}
 \sigma \ldef \inf \{ t>0\; : \; X_t = 0\}
\end{equation}
to be the hitting time of $0$ of the process $X$. The hitting probability and the $\alpha$-order hitting probability are denoted by $\varphi(x)$ and $u_\alpha(x)$, $\alpha>0$ respectively
\begin{equation}\label{eq:hitprob}
 \varphi(x) \ldef P_x(\sigma < \infty),\qquad u_\alpha (x) \ldef E_x[e^{-\alpha \sigma}],\quad x\in \bbR^2.
\end{equation}
We start with the following simple but fundamental lemma.
\begin{lemma}
 The origin has positive $\cH$-capacity.
\end{lemma}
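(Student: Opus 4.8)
The plan is to show the origin has positive $\cH$-capacity by exhibiting the function $\psi_0$ from~\eqref{def:psi} as a witness. Recall that we have already established $\cW = \cH + \bbR\psi_0$ (Corollary~\ref{corro:codimension}) and that $\ell^+(\psi_0) = 1 \neq 0 = \ell^-(\psi_0)$, whereas every $u \in \cH$ satisfies $\ell^+(u) = \ell^-(u)$ (Lemma~\ref{lem:ellonH}). The functionals $\ell^\pm$ are continuous on $\cW$ with respect to $\cE_1$. The key observation is that $\ell^+$ and $\ell^-$ should be thought of as evaluation at the origin ``from $Q_1$'' and ``from $Q_3$'' respectively, and a nonzero trace at the origin is incompatible with the origin being polar.

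The cleanest argument proceeds by contradiction using the probabilistic characterization of zero-capacity sets. Suppose $\capa_1(\{0\}) = 0$. Then $\{0\}$ is exceptional, hence polar for the associated Hunt process, and contained in a properly exceptional set $N$ (using the equivalence of exceptional and zero-capacity sets cited after~\eqref{def:capacity}). On the one hand, since $\rho$ is bounded above and below away from the origin by~\eqref{ass:onepointdeg}, the complement $\bbR^2 \setminus \{0\}$ carries no degeneracy, so $C_0^\infty(\bbR^2 \setminus \{0\})$ approximates any $u \in \cW$ supported away from $0$ in the classical sense. On the other hand, if $\{0\}$ had zero capacity, one could produce cutoff functions $\phi_n \in \cH$ vanishing near the origin with $\phi_n \to 1$ q.e.\ and $\cE(\phi_n,\phi_n) \to 0$, exactly as in the proof of Proposition~\ref{prop:onepoint}. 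Multiplying $\psi_0$ by such cutoffs would then show $\psi_0 u_n \to \psi_0$ in $\cE_1$ with $\psi_0 u_n$ supported away from $0$, placing $\psi_0$ in the $\cE_1$-closure of functions vanishing near the origin, hence in $\cH_0 \subseteq \cH$. This contradicts $\psi_0 \notin \cH$ from Proposition~\ref{prop:nonreg}.

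The most direct route, and the one I would write out, converts this into a single clean estimate. If $\capa_1(\{0\}) = 0$, then by~\eqref{def:capacity} there exist $w_n \in \cH$ with $w_n \geq 1$ on a neighborhood of $0$ and $\cE_1(w_n, w_n) \to 0$, so in particular $w_n \to 0$ in $\cH$. But continuity of $\ell^+$ forces $\ell^+(w_n) \to \ell^+(0) = 0$, whereas $w_n \geq 1$ near the origin should force $\ell^+(w_n) \to 1$ (since $\bar{w}_n(r) \to \ell^+(w_n)$ and $\bar{w}_n(r) \geq 1$ for small $r$ up to handling the $\mu$-a.e.\ versus pointwise subtlety via quasi-continuous modifications). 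This contradiction shows $\capa_1(\{0\}) > 0$.

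\textbf{The main obstacle} will be the interface between the analytic functionals $\ell^\pm$ and the capacitary/probabilistic notions, specifically matching pointwise traces with the $\mu$-a.e.\ inequality $w_n \geq 1$ appearing in the definition of capacity. One must pass to quasi-continuous modifications (invoking~\cite[Theorem 2.1.3]{fukushima2011dirichlet}) and use that $\bar{w}_n(r) = \frac{2}{\pi}\int_0^{\pi/2} w_n(r,\theta)\,d\theta$ together with the estimate~\eqref{eq:ellplus} to control the limit $\ell^+(w_n)$; care is needed because the covering set in~\eqref{def:capacity} is only required to be a neighborhood and the bound $w_n \geq 1$ holds merely $\mu$-a.e. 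The comparatively soft alternative—deducing positive capacity directly from $\psi_0 \in \cW \setminus \cH$ via the characterization $\cH_0 = \{u : \ell^+(u) = \ell^-(u) = 0\}$ of Proposition~\ref{prop:ells} and the fact that polarity of $\{0\}$ would collapse $\cH_0$, $\cH$, and $\cW$—sidesteps this at the cost of a less self-contained argument, and I would mention it as the conceptual reason the result holds.
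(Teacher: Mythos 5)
Your proposal is correct, and the argument you say you would actually write out is a genuinely different route from the paper's. The paper's proof is a two-line reduction: if $\capa_1(\{0\})=0$, then since $\{0\}$ is compact, Lebesgue-null, and is the only degeneracy set of $\rho$ by~\eqref{ass:onepointdeg}, Proposition~\ref{prop:onepoint} forces $\cH=\cW$, contradicting Proposition~\ref{prop:nonreg}. Your first sketch (cutoffs $\phi_n$ vanishing near $0$ with small energy, multiplied against $\psi_0$) is exactly this argument with the proof of Proposition~\ref{prop:onepoint} inlined and specialized to $\psi_0$; note only that $\nabla(\psi_0\phi_n)\to\nabla\psi_0$ merely weakly, so you conclude via weak density plus strong closedness of $\cH_0$, as the paper does there. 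Your preferred ``direct route'' is different and works: extract $w_n\in\cH$ with $w_n\geq 1$ $\mu$-a.e.\ on some ball $B(0,\epsilon_n)$ and $\cE_1(w_n,w_n)\to 0$; continuity of $\ell^+$ on $(\cW,\cE_1)$ (Lemma~\ref{lem:ellonH}) gives $\ell^+(w_n)\to 0$, while $\bar w_n(r)\geq 1$ for a.e.\ $r<\epsilon_n$ and continuity of $\bar w_n$ in $r$ give $\ell^+(w_n)\geq 1$ (not $\to 1$, but $\geq 1$ is what you need). The ``a.e.\ versus pointwise'' obstacle you flag dissolves here without any quasi-continuous modification, precisely because $\ell^+$ is defined through the angular average $\bar w_n$; no probabilistic input is needed at all. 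What each approach buys: the paper's is shorter and reuses general machinery already proved; yours is self-contained, quantitative (it in fact yields the explicit lower bound $\capa_1(\{0\})\geq \|\ell^+\|^{-2}$ in terms of the operator norm of $\ell^+$ on $(\cW,\cE_1)$), and makes transparent that positivity of the capacity is exactly the statement that the trace functional $\ell^+$ does not vanish on $\cH$-neighborhood indicators.
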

\begin{proof} Suppose by absurd that $\capa_1(\{0\}) =0$. By assumption~\eqref{ass:onepointdeg}, $\rho$ is degenerate only at the origin. The Lebesgue measure of $\{ 0\}$ is trivially zero, thus it follows from Proposition~\ref{prop:onepoint} that $\rho$ is regular, which is a contradiction to the conclusion of Proposition~\ref{prop:nonreg}.
\end{proof}

From the fact that $\capa_1(\{ 0\})>0$ we can now derive the same conclusions as in~\cite[Section 2]{fukushima2005poisson}. It implies that $u_\alpha$ is a non-trivial element of $\cH$ and the $\alpha$-potential $U_\alpha \nu_\alpha$ of a positive measure $\nu_\alpha$ concentrated on $\{ 0\}$ (see~\cite[Section 2.2]{fukushima2011dirichlet}),
\begin{equation*}
 \cE_\alpha(u_\alpha, v) = \tilde{v} (0)\nu_\alpha(\{ 0\}),\quad v\in \cH,
\end{equation*}
where we recall that $\tilde{v}$ denotes a quasi-continuous version of $v$. In particular
\begin{equation}
 \cE_1(u_1,u_1) = \capa_1(\{0\}) >0.
\end{equation}

It is not hard to show (\cite[Lemma 2.3.4]{fukushima2011dirichlet}) that $\cH_0 = \{ u \in \cH : \tilde{u}(0) = 0 \}$, and that $(\cE,\cH_0)$ is a regular strongly local Dirichlet form on $L^2(\bbR^2_{0} ,\mu)$, where we noted $\bbR^2_{0} \ldef \bbR^2\setminus \{ 0\}$ to shorten notation.
$(\cE,\cH_0)$ is associated with the part $(X^0,P_x)$ of $X$ on the set $\bbR^2_{0}$. That is, the diffusion process $X^0$ is obtained from $X$ by killing upon the hitting time $\sigma$ (see Section~\ref{sec:elements}). Since $(X,P_x)$ is conservative it is clear that the lifetime of $X^0$ coincides with $\sigma$ and that we can rewrite~\eqref{eq:hitprob} in terms of the killed process. We denote by
\begin{equation*}
 p_t f(x) \ldef E_x[f(X_t)],\quad G_\alpha f(x) \ldef E_x\left [\int_0^\infty e^{-\alpha t} f(X_t)\,dt\right], \quad x\in \bbR^2,
\end{equation*}
the transition function and the resolvent of $X$. Similarly, we note by $p^0_t$ and $G^0_\alpha$ the same quantities for $X^0$.
\begin{prop}\label{prop:irreducible}
 The Dirichlet forms $(\cE,\cH_0)$ and $(\cE,\cH)$ are irreducible. In particular, $\varphi(x) = 1$ and $u_\alpha(x)>0$ for q.e.\ $x\in \bbR^2$.
\end{prop}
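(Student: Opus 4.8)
The plan is to establish irreducibility first for $(\cE,\cH)$ and then transfer it, or equivalently to run both arguments in parallel, and to deduce the two probabilistic consequences from irreducibility together with recurrence. Recall that a Dirichlet form is irreducible if any $\cE$-invariant set $A$ (one for which the semigroup commutes with multiplication by $\indicator_A$, equivalently $\indicator_A \tilde u \in \cH$ for all $u\in\cH$ and $\cE(u,u)=\cE(\indicator_A u,\indicator_A u)+\cE(\indicator_{A^c}u,\indicator_{A^c}u)$) is trivial, i.e.\ $\mu(A)=0$ or $\mu(A^c)=0$. So first I would fix such an invariant set $A$ for $(\cE,\cH)$ and aim to show $\mu(A)\in\{0,\mu(\bbR^2)\}$.

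The key geometric input is that outside $B(0,1)$ the weight is $\rho\equiv 1$, so there the form is just the Dirichlet form of standard two–dimensional Brownian motion, which is well known to be irreducible (indeed, $\bbR^2\setminus B(0,1)$ is connected and on it $\cE$ coincides with the classical Dirichlet integral). The strategy is therefore: an invariant set $A$ must, up to $\mu$-null sets, be a union of the connected ``pieces'' of the state space on which the form decouples. First I would argue that on the connected open set $\{|x|>1/2\}$ the strong locality together with the local equivalence of $\rho$ to a constant (Assumption~\eqref{ass:onepointdeg}) forces $\indicator_A$ to be $\mu$-a.e.\ constant there, by the classical connectedness argument for the unweighted form. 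It remains to propagate this across the degeneracy at the origin and into the four quadrants. Here I would use that each quadrant intersected with $B(0,1)$ is connected to the exterior region through an arc of the circle $\{|x|=1/2\}$ where $\rho$ is bounded above and below, so the invariant indicator cannot jump across these arcs; hence $\indicator_A$ is a.e.\ equal to a single constant on all of $\bbR^2$, giving irreducibility of $(\cE,\cH)$. For $(\cE,\cH_0)$ the same reasoning applies verbatim on $\bbR^2_0$, since removing the single point $\{0\}$ (of zero Lebesgue measure) changes neither the connectedness of the quadrants-to-exterior gluing nor the form.

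For the probabilistic conclusions I would proceed as follows. By Proposition~\ref{prop:properties} the form $(\cE,\cH)$ is recurrent and conservative, and we have just shown it is irreducible; by \cite[Theorem 4.7.1 (or Lemma 1.6.4)]{fukushima2011dirichlet}, an irreducible recurrent Dirichlet form has the property that every nonempty open set is hit with probability one, and more precisely that any set of positive capacity is hit almost surely for q.e.\ starting point. Since the previous lemma shows $\capa_1(\{0\})>0$, the point $\{0\}$ is non-polar, and recurrence plus irreducibility upgrade this to $P_x(\sigma<\infty)=1$ for q.e.\ $x$, i.e.\ $\varphi(x)=1$. Finally, for $u_\alpha(x)=E_x[e^{-\alpha\sigma}]$ I would note that $\{\sigma<\infty\}$ has full probability and $\sigma$ is an everywhere-finite stopping time under $P_x$ for q.e.\ $x$, so $e^{-\alpha\sigma}>0$ almost surely, whence $u_\alpha(x)=E_x[e^{-\alpha\sigma}]>0$ for q.e.\ $x$; alternatively this follows because $u_\alpha$ is a nontrivial element of $\cH$ whose zero set would be invariant, contradicting irreducibility unless it is $\mu$-null.

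The main obstacle I expect is the decoupling-across-the-origin step: Assumption~\eqref{ass:onepointdeg} only controls $\rho$ away from the origin, and one must be careful that the degeneracy at $0$ does not allow an invariant set to separate, say, $Q_1\cup Q_3$ from $Q_2\cup Q_4$. The clean way around this is to avoid arguing through the singular point altogether and instead connect every quadrant to the common exterior region $\{|x|>1\}$ across the good circle $\{|x|=1/2\}$, where $\rho$ is bounded from above and below and the classical connectedness argument for invariant indicators applies directly; strong locality then prevents $\indicator_A$ from being non-constant across these arcs. Handling the technical bookkeeping of quasi-continuous versions and $\mu$-null modifications when invoking the invariant-set characterization is the only remaining subtlety.
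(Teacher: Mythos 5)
Your argument is correct, and the probabilistic half (recurrence plus irreducibility plus $\capa_1(\{0\})>0$ giving $\varphi=1$ q.e.\ via \cite[Theorem 4.7.1]{fukushima2011dirichlet}, hence $u_\alpha>0$ q.e.) is exactly what the paper does. Where you diverge is in how irreducibility is distributed between the two forms. The paper first establishes irreducibility of the \emph{killed} form $(\cE,\cH_0)$ on the connected set $\bbR^2_0$, citing \cite[Example 4.6.1]{fukushima2011dirichlet} together with~\eqref{ass:onepointdeg} --- this is precisely the locally-elliptic-weight-on-a-connected-open-set argument you spell out by hand with the arcs of $\{|x|=1/2\}$. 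It then gets irreducibility of $(\cE,\cH)$ essentially for free from the resolvent domination $\langle \indicator_E, G_\alpha \indicator_F\rangle \geq \langle \indicator_E, G^0_\alpha \indicator_F\rangle>0$, since $X$ extends $X^0$. You instead run the invariant-set/connectedness argument directly on $(\cE,\cH)$ and then say $(\cE,\cH_0)$ follows verbatim. Both routes work: yours avoids any reference to the killed process at this stage but forces you to handle invariant sets for the larger domain $\cH$ near the singular point (which you correctly dispose of by noting that $\{0\}$ is $\mu$-null and that connectedness of $\bbR^2\setminus\{0\}$ through regions where $\rho$ is bounded above and below suffices); the paper's route localizes all the analytic work in the killed form, where the state space excludes the singularity by fiat, and transfers the conclusion by monotonicity of resolvents. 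One small caution: your parenthetical alternative for $u_\alpha>0$ (``its zero set would be invariant'') is not needed and is less immediate than the direct observation that $\sigma<\infty$ a.s.\ implies $e^{-\alpha\sigma}>0$ a.s.; stick with the direct argument, which is what the paper uses.
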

\begin{proof} The fact that the Dirichlet form $(\cE,\cH_0)$ on $L^2(\bbR^2_{0},\mu)$ is irreducible follows immediately from assumption~\eqref{ass:onepointdeg} and~\cite[Example 4.6.1]{fukushima2011dirichlet}.
 To prove that $(\cE, \cH)$ is irreducible it is enough to show that for any two Borel sets $E, F\subset \bbR^2$ of positive measure $\langle 1_E, G_\alpha 1_F \rangle > 0$. Clearly,
 \begin{equation*}
  \langle 1_E, G_\alpha 1_F \rangle \geq   \langle 1_E, G^0_\alpha 1_F \rangle >0
 \end{equation*}
 by the irreducibility of $(\cE,\cH_0)$.

 Finally, as $(\cE,\cH)$ is irreducible and recurrent, it follows from~\cite[Theorem 4.7.1]{fukushima2011dirichlet} that $\varphi(x) = 1$ for q.e.\ $x\in \bbR^2$, and thus that $u_\alpha(x)>0$ for q.e.\ $x\in \bbR^2$.
\end{proof}

\begin{remark}\label{rem:onepoint}We shall remark here that $(X,P_x)$ on $\bbR^2$ is nothing else but a symmetric one point extension of $(X^0,P_x)$ on $\bbR^2_0$. That is,
 \begin{itemize}
  \item $X$ is a $\mu$-symmetric diffusion process on $\bbR^2$ with no killing inside $\bbR^2$;
  \item $X$ is an extension of $X^0$ in the sense that the process obtained from $X$ by killing upon the hitting time of zero is identical in law to $X^0$.
 \end{itemize}
 One-point symmetric extensions have been extensively studied in a series of papers~\cite{fukushima2005poisson, chen2005extending, chen2015one} at different levels of abstraction (see also the monograph~\cite{chen2012symmetric}). One important lesson we learn from these papers is that such extensions are unique in law~\cite[Theorem 7.5.4]{chen2012symmetric}. This implies in particular that $(\cE,\cW)$ on $L^2(\bbR^2,\mu)$ cannot be quasi-regular.
\end{remark}

In the next theorem we characterize the resolvent of $X$ via quantities which depend solely on the killed process $X_0$.
\begin{theorem}[Theorem 2.1~\cite{fukushima2005poisson}]\label{thm:char} It holds that
 \begin{itemize}
  \item[i)] $u_\alpha$ is a non-trivial element in $\cH\cap L^1(\bbR^2_{0},\mu)$.
  \item[ii)] For any $f\in L^2(\bbR^2,\mu)$ and $x\in \bbR^2$,
        \begin{equation}\label{eq:generatordecomposition}
         G_\alpha f(x) = G_\alpha^0 f(x) + \frac{\langle u_\alpha, f\rangle}{\alpha\langle u_\alpha, 1\rangle} u_\alpha(x),\quad G_\alpha f(0) = \frac{\langle u_\alpha, f\rangle}{\alpha\langle u_\alpha, 1\rangle}.
        \end{equation}
  \item[iii)] The origin is regular for itself and an instantaneous state with respect to $X$
        \begin{equation*}
         P_0(\sigma = 0, \tau = 0) = 1, \quad \tau \ldef\inf \{t> 0: X_t\neq 0\}.
        \end{equation*}
 \end{itemize}
\end{theorem}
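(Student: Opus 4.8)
The plan is to read this as a direct application of the one-point extension machinery set up in Remark~\ref{rem:onepoint}: $(X,P_x)$ is the symmetric one-point extension of the killed process $(X^0,P_x)$, and I would combine this with the symmetry of the resolvents $G_\alpha$ and $G_\alpha^0$. The technical heart is the resolvent identity in~\eqref{eq:generatordecomposition}. First I would establish, for $f\in L^2(\bbR^2,\mu)$ and $x\in\bbR^2$, the decomposition
\begin{equation*}
 G_\alpha f(x) = G_\alpha^0 f(x) + u_\alpha(x)\,G_\alpha f(0)
\end{equation*}
by splitting the time integral defining $G_\alpha f(x)$ at the hitting time $\sigma$ of~\eqref{eq:hittingtime} and invoking the strong Markov property. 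The contribution before $\sigma$ is exactly the resolvent $G_\alpha^0 f(x)$ of the part process, while on $\{\sigma<\infty\}$ one has $X_\sigma=0$, so the contribution after $\sigma$ equals $E_x[e^{-\alpha\sigma}]\,G_\alpha f(0)=u_\alpha(x)\,G_\alpha f(0)$. Throughout, $G_\alpha f$ is taken in its quasi-continuous version, whose value at the origin is legitimate precisely because $\capa_1(\{0\})>0$.

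It then remains to identify the number $G_\alpha f(0)$. Pairing the displayed identity with $g\in L^2(\bbR^2,\mu)$, writing the analogous identity for $g$ paired with $f$, and using the symmetry of both $G_\alpha$ and $G_\alpha^0$, the two expressions for $\langle f,G_\alpha g\rangle$ must agree, which forces
\begin{equation*}
 G_\alpha f(0)\,\langle u_\alpha, g\rangle = G_\alpha g(0)\,\langle u_\alpha, f\rangle .
\end{equation*}
Hence $G_\alpha f(0)=c_\alpha\,\langle u_\alpha,f\rangle$ for a constant $c_\alpha$ independent of $f$. To compute $c_\alpha$ I would test against an increasing approximation $f_n=\indicator_{B(0,n)}\uparrow 1$: conservativeness (Proposition~\ref{prop:properties}) gives $G_\alpha 1(0)=1/\alpha$, while part~(i) guarantees $\langle u_\alpha,1\rangle<\infty$, so $\langle u_\alpha,f_n\rangle\to\langle u_\alpha,1\rangle$ by monotone convergence. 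This yields $c_\alpha=1/(\alpha\langle u_\alpha,1\rangle)$ and thereby both formulas in~\eqref{eq:generatordecomposition}, the boundary value being the case $x=0$.

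For part~(i), that $u_\alpha$ is a non-trivial element of $\cH$ was already recorded above; the genuinely new content is $u_\alpha\in L^1(\bbR^2_0,\mu)$, i.e.\ $\langle u_\alpha,1\rangle<\infty$. Since $\rho\equiv 1$ outside $B(0,1)$, there $X$ is a planar Brownian motion, and $u_\alpha(x)=E_x[e^{-\alpha\sigma}]$ is $\alpha$-harmonic for $|x|>1$; by the maximum principle it is dominated by the radial solution of $\alpha w=\Delta w$ matching it on $\partial B(0,1)$, which decays exponentially (like the modified Bessel function $K_0(\sqrt{\alpha}\,|x|)$). This gives integrability at infinity, while $0\le u_\alpha\le 1$ together with $\rho\in L^1_{\loc}(\bbR^2)$ gives integrability on $B(0,1)$.

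Finally, for part~(iii): by Blumenthal's $0$--$1$ law $P_0(\sigma=0)\in\{0,1\}$, and since $\capa_1(\{0\})>0$ the origin is non-polar; this, with the continuity of sample paths and the extension structure of Remark~\ref{rem:onepoint}, selects the regular alternative $P_0(\sigma=0)=1$. Instantaneity $P_0(\tau=0)=1$ follows from the absence of killing inside $\bbR^2$ together with $\mu(\{0\})=0$, which prevents the origin from being a holding point. I expect the main obstacle to be the rigorous justification of $\langle u_\alpha,1\rangle<\infty$ in~(i)---the far-field decay estimate---and the ensuing limiting argument that fixes $c_\alpha$, since $1\notin L^2(\bbR^2,\mu)$ must be approximated; by contrast the symmetry computation giving~\eqref{eq:generatordecomposition} is routine once point evaluation at $0$ has been legitimized through $\capa_1(\{0\})>0$.
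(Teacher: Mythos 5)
The paper does not prove this statement: it is quoted verbatim as Theorem~2.1 of the cited reference \cite{fukushima2005poisson} (Fukushima--Tanaka), and the text surrounding it only records the prerequisites ($\capa_1(\{0\})>0$, hence $u_\alpha$ a non-trivial element of $\cH$, plus recurrence, conservativeness and irreducibility from Propositions~\ref{prop:properties} and~\ref{prop:irreducible}). Your sketch is essentially a reconstruction of the proof in that reference, and the main line is sound: the strong Markov decomposition at $\sigma$ giving $G_\alpha f=G^0_\alpha f+u_\alpha\,G_\alpha f(0)$, the symmetry of $G_\alpha$ and $G^0_\alpha$ forcing $G_\alpha f(0)=c_\alpha\langle u_\alpha,f\rangle$, the normalization $c_\alpha=1/(\alpha\langle u_\alpha,1\rangle)$ via conservativeness and $f_n\uparrow 1$, and the Bessel-type far-field bound $u_\alpha(x)\le E_x[e^{-\alpha\sigma_{\overline{B(0,1)}}}]$ for the $L^1$ claim are all correct and are exactly the ingredients used there.

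Two steps deserve tightening. First, in part~(iii) the assertion that Blumenthal's $0$--$1$ law together with non-polarity ``selects the regular alternative'' is not by itself an argument: $P_0(\sigma=0)\in\{0,1\}$ and non-polarity of $\{0\}$ do not immediately exclude $P_0(\sigma=0)=0$. The standard way to close this is to invoke that the set of points of a Borel set $B$ which are irregular for $B$ is semipolar, hence of zero capacity (\cite[Theorem 4.1.3 and 4.2.1]{fukushima2011dirichlet}); since $\capa_1(\{0\})>0$, the single point $0$ cannot be irregular for $\{0\}$. For instantaneity, the cleanest route is to apply your own formula~\eqref{eq:generatordecomposition} with $f=\indicator_{\{0\}}$: since $\mu(\{0\})=0$ one gets $G_\alpha\indicator_{\{0\}}(0)=0$, which contradicts $P_0(\tau>0)=1$; this is sharper than the informal ``no killing plus $\mu(\{0\})=0$ prevents holding''. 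Second, the statement is claimed for \emph{all} $x\in\bbR^2$, whereas a Hunt process attached to $(\cE,\cH)$ is canonical only up to a properly exceptional set; to get everywhere-statements (and in particular a meaningful $P_0$) one must work with a version satisfying the absolute continuity condition, as Fukushima--Tanaka do. Neither point invalidates your approach, but both need to be said for the proof to be complete.
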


We proved in Proposition~\ref{prop:irreducible} that the origin is a recurrent point for $X$. We will now use that the maps $t\to \tilde{u}(X_t)$ are continuous whenever $u\in \cH$ to give a qualitative description on how the paths approach the origin. First we start with a Lemma.

\begin{lemma}\label{lem:qc}
 Let $u:\bbR^2\to\bbR$ be defined as in Proposition~\ref{prop:domain}.
 Then, $u$ is quasi-continuous in the restricted sense.
\end{lemma}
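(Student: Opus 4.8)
The plan is to show that the function $u$ from Proposition~\ref{prop:domain} is quasi-continuous in the restricted sense, meaning that for every $\epsilon>0$ there is an open set $G$ with $\capa_1(E_\Delta\setminus G)<\epsilon$ on whose complement $u$ is continuous (including at infinity, where we set $u(\Delta)=0$). Since $u=\eta f(\theta)$ is already continuous on $\bbR^2\setminus\{0\}$ (it is a product of a smooth cutoff with a piecewise-differentiable angular function, and has compact support so it extends continuously by zero to $\Delta$), the \emph{only} obstruction to genuine continuity is the single point at the origin, where the angular dependence prevents $u$ from having a limit. Thus the whole problem reduces to excising a small neighborhood of $0$ of arbitrarily small capacity.

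The key observation is that we already know from Proposition~\ref{prop:ells} (or directly from the construction) that $u\in\cH_0$, and more importantly that the origin can be surrounded by sets of small capacity. First I would recall that $\capa_1(\{0\})$ is finite (it equals $\cE_1(u_1,u_1)$), but finiteness alone is not enough; what I need is that small balls $B(0,\delta)$ around the origin have capacity tending to zero, OR that I can find open neighborhoods of $0$ of arbitrarily small capacity. Here is the subtlety and the main obstacle: the origin has \emph{positive} capacity, so I cannot simply excise $\{0\}$ itself with a zero-capacity set. However, quasi-continuity in the restricted sense does not require excising a zero-capacity set — it requires, for each $\epsilon>0$, an open $G$ with $\capa_1(E_\Delta\setminus G)<\epsilon$. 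The trick is that $u$ is supported away from $Q_1\cup Q_3$ near the origin (it vanishes on $(-\pi,-\pi/2)\cup(0,\pi/2)$), so the only directions along which one approaches $0$ and sees nonzero values of $u$ lie inside the cones $A^+_{\epsilon,\delta}$ and $A^-_{\epsilon,\delta}$ of~\eqref{eq:cones}. The capacity estimate~\eqref{ineq:capacityofcones} gives us exactly the tool we need.

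The plan in detail is as follows. Fix $\epsilon>0$. For suitable small $\delta$ and a small radius, I would take $G$ to be $\bbR^2$ minus the closure of a thin region around the origin that contains all the points where $u$ fails to be continuous. Concretely, because $u$ vanishes identically near the rays $\theta=0,\pm\pi/2,\pi$ in a neighborhood of the origin (as $f\equiv 0$ on $(-\pi,-\pi/2)\cup(0,\pi/2)$ and $f$ is piecewise differentiable), the discontinuity at the origin is only felt when approaching through the interior of the support of $f$, i.e.\ through the cones $A^{\pm}_{\epsilon',\delta}$. I would let $G$ be the complement of a closed set $F_{\epsilon',\delta}$ that is the union of a small closed ball (or closed cone-neighborhood) capturing these directions, chosen so that $u$ restricted to $\bbR^2\setminus F_{\epsilon',\delta}$ extends continuously up to and including the origin by the value $0$. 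Then I bound $\capa_1(F_{\epsilon',\delta})$ using~\eqref{ineq:capacityofcones}: by taking $\epsilon'$ small enough the right-hand side $\tfrac{C}{\delta}\int_0^{2\epsilon'}\tfrac{a(r)}{r}\,dr$ becomes smaller than the prescribed $\epsilon$, since $\int_0^1 a(r)/r\,dr<\infty$ by~\eqref{ass:integrability} forces $\int_0^{2\epsilon'} a(r)/r\,dr\to 0$ as $\epsilon'\to 0$.

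The main obstacle I anticipate is geometric bookkeeping rather than a deep analytic difficulty: I must choose the excised neighborhood of the origin so that (a) it genuinely contains every approach direction along which $u$ is discontinuous at $0$, so that $u$ is continuous in the restricted sense off $G^c$, while simultaneously (b) its capacity is controlled by the cone estimate~\eqref{ineq:capacityofcones} rather than by the capacity of a full ball (which need not be small, since $\capa_1(\{0\})>0$). The resolution is precisely that the support structure of $f$ confines the relevant behaviour to the cones in $Q_2\cup Q_4$, whose capacity does vanish as the radius shrinks; the ``bad'' sectors $Q_1\cup Q_3$ carry positive origin-capacity but $u$ is identically zero there near $0$, hence continuous. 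I would therefore only need to excise cone-shaped neighborhoods and not a genuine ball around the origin. Once the neighborhood is correctly chosen and~\eqref{ineq:capacityofcones} is applied, the continuity of $u$ on the complement — and its extension to $E_\Delta$ by the compact-support hypothesis — is immediate, completing the proof.
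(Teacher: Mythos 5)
Your overall strategy --- excising a small-capacity open neighborhood of the origin contained in $Q_2\cup Q_4$ and using \eqref{ineq:capacityofcones} --- is a genuinely different and more concrete route than the paper's, which simply notes that $u\in\cH_0$ (Proposition~\ref{prop:domain}), invokes the existence of a quasi-continuous modification $\tilde u$ in the restricted sense for elements of the Dirichlet space, and then identifies $\tilde u$ with $u$ using the continuity of $u$ on $\bbR^2_0$ and the fact that membership in $\cH_0$ forces $\tilde u(0)=0=u(0)$. Your route has the virtue of exhibiting the exceptional sets explicitly, but as written it has one genuine gap: a \emph{single} pair of cones $A^{\pm}_{\epsilon',\delta}$ with a \emph{fixed} angular gap $\delta>0$ does not make $u$ continuous at the origin on the complement. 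Off $A^{+}_{\epsilon',\delta}\cup A^{-}_{\epsilon',\delta}$ there remain points with $\theta\in(\pi/2,\pi/2+\delta)$ (and the three symmetric strips) at arbitrarily small radius, where $|u|$ can be as large as $\|\eta\|_\infty\sup\{|f(\theta)|:\theta\in[\pi/2,\pi/2+\delta]\}$; this is positive for fixed $\delta$ unless $f$ happens to vanish on neighborhoods of the boundary rays, so $u$ restricted to the complement does not tend to $u(0)=0$. You cannot repair this by letting $\delta=0$ (i.e.\ excising the full quadrant-cones), because the bound \eqref{ineq:capacityofcones} degenerates like $C/\delta$ and the full open cones are not known to have small capacity --- indeed any test function equal to $1$ on all of $Q_2\cap B(0,\epsilon)$ must transition to $0$ through $Q_1\cup Q_3$, where the weight $a^{-1}(r)$ makes the angular energy diverge.

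The fix is a countable union over shrinking scales: choose $\delta_n\downarrow 0$ and then $\epsilon_n\downarrow 0$ so small that $\tfrac{C}{\delta_n}\int_0^{2\epsilon_n}\tfrac{a(r)}{r}\,dr<\epsilon\,2^{-n-1}$, and excise $G=\bigcup_n\bigl(A^{+}_{\epsilon_n,\delta_n}\cup A^{-}_{\epsilon_n,\delta_n}\bigr)$. Countable subadditivity of $\capa_1$ gives $\capa_1(G)<\epsilon$, while on the complement the non-excised angular strips at radius $r\in(\epsilon_{n+1},\epsilon_n]$ have width at most $\delta_n$, so $|u|$ there is controlled by the modulus of continuity of $f$ at the rays $\theta=0,\pm\pi/2,\pi$ and tends to $0$ as $r\to 0$; this is where you must also make explicit that $f$ is continuous and vanishes at those rays (which is forced by $u\in W^{1,1}_{\loc}(\bbR^2)$, since a jump of $f$ across a ray would produce a singular distributional derivative). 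With that modification your argument is complete; without it, the claimed continuity of the restriction at the origin --- the only point at issue --- is not established.
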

\begin{proof}
 We know from Proposition~\ref{prop:domain} that $u\in \cH_0$. Moreover, by construction, $u\in C(\bbR^2_{0})$ and $u(0)=0$. According to~\cite[Theorem 2.1.3]{fukushima2011dirichlet} there is a $\mu$-modification $\tilde{u}$ of $u$ which is quasi-continuous in the restricted sense. Using continuity of $u$ and the definition of quasi-continuity for $\tilde{u}$, it is immediate to check that $u$ is itself quasi-continuous in the restricted sense.
\end{proof}

\begin{prop}\label{prop:approach_origin} There exists a properly exceptional set $\cN\subset \bbR^2$ such that for all $x\in\bbR^2_{0}\setminus\cN$
 \begin{equation} \label{eq:entrance}
  \Big[\liminf_{t\uparrow\sigma} \theta(X^0_t),\limsup_{t\uparrow\sigma} \theta(X^0_t)\Big]\subset [-\pi,-\pi/2]\cup[0,\pi/2],\quad P_x\mbox{-a.s}.
 \end{equation}
 where $\theta : \bbR^d\setminus\{0\} \to [-\pi,\pi)$ is the angle variable in the polar coordinates.
\end{prop}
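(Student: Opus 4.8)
The plan is to exhibit quasi-continuous functions that are constant equal to one on thin cones inside $Q_2$ and $Q_4$ but vanish at the origin, so that the continuity of $t\mapsto u(X_t)$ for quasi-continuous $u$ forces the angular variable to stay away from the interiors of $Q_2$ and $Q_4$ as the path is absorbed at $0$; a planar connectedness argument then upgrades this to confinement to a single side, $\overline{Q_1}$ or $\overline{Q_3}$.

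First I would fix $\delta\in(0,\pi/4)$ and pick a piecewise differentiable $f_\delta^+\colon S_1\to[0,1]$ with $f_\delta^+\equiv1$ on $(\pi/2+\delta,\pi-\delta)$ and $\supp f_\delta^+\subset(\pi/2,\pi)$; in particular $f^+_\delta$ vanishes on the angular ranges of $Q_1$ and $Q_3$. Choosing $\eta\in C_0^\infty(\bbR^2)$ with $\eta\equiv1$ on $B(0,1/2)$, I set $u_\delta^+\ldef\eta\,f_\delta^+(\theta)$ and $u_\delta^+(0)\ldef0$. By Proposition~\ref{prop:domain} we have $u_\delta^+\in\cH_0$, and by Lemma~\ref{lem:qc} the function $u_\delta^+$ is quasi-continuous in the restricted sense; an entirely symmetric construction gives $u_\delta^-$ with $f_\delta^-$ supported around the angle $-\pi/4$ of $Q_4$. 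Proposition~\ref{continuity} then attaches to each $u_\delta^\pm$ a properly exceptional set, and I let $\cN$ be the union of these over rational $\delta\in(0,\pi/4)$ and over the two signs, enlarged by the exceptional set of Proposition~\ref{prop:irreducible} on which $\varphi<1$. The set $\cN$ is properly exceptional, being a countable union of such.

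Now I fix $x\in\bbR^2_{0}\setminus\cN$ and argue on the almost sure event $\{\sigma<\infty\}$. As $(X,P_x)$ has continuous paths and $X_\sigma=0$, we have $X_t\to0$ and hence $\eta(X_t)=1$ for $t$ close to $\sigma$. Since $t\mapsto u_\delta^\pm(X_t)$ is continuous on $(0,\infty)$ by Proposition~\ref{continuity} and $u_\delta^\pm(0)=0$, it follows that $f_\delta^\pm(\theta(X_t))=u_\delta^\pm(X_t)\to0$, so that for $t$ in a left neighbourhood of $\sigma$ the angle $\theta(X^0_t)=\theta(X_t)$ avoids the open sectors $(\pi/2+\delta,\pi-\delta)$ and $(-\pi/2+\delta,-\delta)$. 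Combining this with $|X_t|\to0$, for such $t$ the path lies in the punctured disc $B(0,r_0)\setminus\{0\}$ with these two angular sectors removed. For $\delta<\pi/4$ this region has exactly two connected components, one whose angular range is $C_1(\delta)=[-\delta,\pi/2+\delta]$ around $Q_1$, the other $C_3(\delta)=[\pi-\delta,\pi)\cup[-\pi,-\pi/2+\delta]$ around $Q_3$; the removed sectors separate them and the deleted origin prevents them from meeting at the centre. By continuity the path is trapped in a single component on this neighbourhood.

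Finally I would run this along a sequence $\delta_k\downarrow0$. Since $C_1(\delta_{k+1})\subset C_1(\delta_k)$ and $C_3(\delta_{k+1})\subset C_3(\delta_k)$, while $C_1(\delta_k)\cap C_3(\delta_k)=\emptyset$, the trapping side $s\in\{1,3\}$ must be the same for every $k$. Every subsequential limit of $\theta(X^0_t)$ as $t\uparrow\sigma$ therefore lies in $\bigcap_k\overline{C_s(\delta_k)}$, and a direct computation gives $\bigcap_k\overline{C_1(\delta_k)}=[0,\pi/2]$ and $\bigcap_k\overline{C_3(\delta_k)}=[-\pi,-\pi/2]$; taking infimum and supremum yields $[\liminf_{t\uparrow\sigma}\theta(X^0_t),\limsup_{t\uparrow\sigma}\theta(X^0_t)]$ inside $[0,\pi/2]$ or inside $[-\pi,-\pi/2]$, which is the assertion. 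The step I expect to be most delicate is this last intersection on the $Q_3$-side, where the branch cut of $\theta$ along the negative $x$-axis sits inside $C_3(\delta_k)$: the spurious piece $[\pi-\delta_k,\pi)$ near the cut must be shown to contribute nothing to the limit set, which is exactly why one works with the connected planar path and the removed sectors rather than with the scalar $\theta(X_t)$ directly.
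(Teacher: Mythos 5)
Your proposal is correct and rests on the same engine as the paper's proof: functions of the form $\eta(x)f(\theta(x))$ with $f$ supported in the angular ranges of $Q_2\cup Q_4$ lie in $\cH_0$ (Proposition~\ref{prop:domain}), are quasi-continuous in the restricted sense (Lemma~\ref{lem:qc}), and hence satisfy $f(\theta(X^0_t))\to 0$ as $t\uparrow\sigma$ by Proposition~\ref{continuity}. The execution differs in two places. First, where you use a countable family of sector bumps $f^\pm_\delta$ and then let $\delta\downarrow 0$, the paper uses a single tent function $f$ whose zero set is exactly $[-\pi,-\pi/2]\cup[0,\pi/2]$, so that every subsequential limit $\bar\theta$ of $\theta(X^0_t)$ must satisfy $f(\bar\theta)=0$ in one shot; this is a little more economical but the two are equivalent. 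Second, and more substantively, to pass from ``the limit set lies in the union'' to ``the limit set lies in one component,'' the paper invokes continuity of $t\mapsto\theta(X^0_t)$ and an intermediate-value argument, whereas you run a connectedness argument on the planar path in the punctured disc with two open sectors removed. Your version is actually the more careful one: $t\mapsto\theta(X^0_t)$ is \emph{not} continuous when the path crosses the negative $x$-axis (the branch cut of $\theta$), so the paper's intermediate-value step is, strictly speaking, only valid after one argues on the circle rather than on the real-valued $\theta$ --- which is precisely what your planar argument does. The one point you flag as delicate, the sliver $[\pi-\delta_k,\pi)$ near the cut, is a genuine artifact shared by both proofs: a path approaching the origin hugging the negative $x$-axis from the $Q_2$ side would have $\limsup_{t\uparrow\sigma}\theta(X^0_t)=\pi$, which is not in $[-\pi,-\pi/2]$ as a subset of $\bbR$, and no test function of the admissible form can exclude this (any continuous $f$ vanishing on $(-\pi,-\pi/2)$ must vanish at $-\pi\equiv\pi$). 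This is a defect of parametrizing directions by $\theta\in[-\pi,\pi)$ rather than stating the inclusion for the limit set of $X^0_t/|X^0_t|$ on $S_1$, where the intended arcs are $\overline{Q_1}\cap S_1$ and $\overline{Q_3}\cap S_1$; read on the circle, both your argument and the paper's close completely, so this should not be counted against you.
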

\begin{remark} Proposition~\ref{prop:approach_origin} shows that $P_x\mbox{-a.s}$ the angular component of  the process $X_t$ associated to $(\cE, \cH)$ remains in a arbitrarily small neighborhood of either $[-\pi,-\pi/2]$ or $[0,\pi/2]$ for times immediately before $\sigma$.
In particular the origin is approached only from the cones $Q_1$ or $Q_3$.
\end{remark}
\begin{proof} Let $\eta\in C_0^\infty(\bbR^2)$ be such that $\eta(x)= 1$ for $x\in B(0,1)$. We define the function,
\begin{equation}\label{eq:test}
 \xi(x) \ldef \eta(x) f(\theta(x)),\quad x\neq 0,\qquad \xi(0) \ldef 0,
\end{equation}
where $f:S_1\to \bbR$ is the piecewise differentiable function defined by
\begin{equation*}
 f(\theta)\ldef\left \{
 \begin{array}{lll}
  0\;,                      & \theta \in [-\pi,-\pi/2), \\
  -\pi/4+|\theta-\pi/4|\;,  & \theta \in [-\pi/2, 0),   \\
  0\;,                      & \theta \in [0,\pi/2),     \\
  \pi/4-|\theta-3/4 \pi|\;, & \theta \in [\pi/2,\pi).
 \end{array}
 \right.
\end{equation*}
By Lemma~\ref{lem:qc}, $\xi$ is quasi-continuous in the restricted sense. By Proposition~\ref{continuity} and since $X^0$ has continuous paths, there exists a properly exceptional set such that for all $x\in \bbR_0^2\setminus\cN$
 \begin{equation*}
  \lim_{s\to t} \xi(X^0_s) = \xi(X^0_{t}),\, \forall t\in (0,\infty),\quad P_x\mbox{-a.s.}
 \end{equation*}
 In particular for all $x\in \bbR_0^2\setminus\cN$, as $\sigma<\infty$, $P_x$-a.s.
 \begin{equation*}
  \lim_{t\uparrow \sigma} \xi(X^0_t) = \xi(X_\sigma) = \xi(0)= 0,\quad P_x\mbox{-a.s.}
 \end{equation*}
 Fix any sample path in such a set of $P_x$-measure one. First notice that $X^0_t\to 0$ as $t\uparrow \sigma$.
 We prove that $\limsup_{t\uparrow \sigma} \theta(X_t^0) \in [-\pi,-\pi/2]\cup [0,\pi/2]$. If this were not the case there would exist a sequence $t_n \uparrow \sigma$ such that
 \[
  \theta(X^0_{t_n})\to \bar{\theta} \notin [-\pi,-\pi/2]\cup [0,\pi/2].
 \]
 It yields the following contradiction
 \begin{equation*}
  \lim_{t\uparrow \sigma} \xi(X^0_t) =\lim_{n\to\infty} \eta(X_{t_n}^0) f(\theta(X_{t_n}^0)) = f(\bar{\theta})\neq 0.
 \end{equation*}
 Clearly, the same argument works for the inferior limit. For the second part, suppose that
 \begin{equation*}
  \liminf_{t\uparrow \sigma} \theta(X_t^0) \in [-\pi,\pi/2],\quad \limsup_{t\uparrow \sigma} \theta(X_t^0) \in [0,\pi/2].
 \end{equation*}
 As $t\to \theta(X^0_{t})$ is continuous for $t<\sigma$, it follows again that there exists a sequence $t_n \uparrow \sigma$ such that
 \[
  \theta(X^0_{t_n})\to \bar{\theta} \notin [-\pi,-\pi/2]\cup [0,\pi/2],
 \]
 which again leads to a contradiction.
\end{proof}

\subsection{The $\cW$-process}\label{sec:W}

According to~\cite[Theorem 7.5.4]{chen2012symmetric} there is a unique quasi-regular one-point extension of $\cH_0$. We conclude that $(\cE,\cW)$ is neither a regular or quasi-regular Dirichlet form on $L^2(\bbR^2,\mu)$, because in that case it would coincide with $(\cE,\cH)$.
Nonetheless, we wish to associate a concrete stochastic process to it. This is achieved by a regularization procedure; roughly speaking, we construct a regular Dirichlet form on a possibly different space which is ``isomorphic'' to the original one. We start by recalling briefly the notion of equivalent Dirichlet spaces; for more see~\cite[Appendix A.4]{fukushima2011dirichlet}.

For the purposes of this article, we say that $(E,\mu,\cE,\cF)$ is a \emph{Dirichlet space} if $E$ is a locally compact metric space, $\mu$  a positive Radon measure on $E$ such that $\supp[\mu] = E$ and $(\cE,\cF)$ is a Dirichlet form on $L^2(E,\mu)$.
We shall denote by $\cF_b \ldef \cF \cap L^{\infty}(E,\mu)$, and by $\|\cdot \|_{\infty}$ the $\mu$-essential supremum.

We call two Dirichlet spaces $(E,\mu,\cE,\cF)$ and $(F,\nu,\cD,\cG)$ equivalent if there is an algebraic isomorphism $\Phi:\cF_b\to\cG_b$ which preserves the following metrics, for $u\in \cF_b$
\[
 \|u\|_{\infty} = \|\Phi(u)\|_{\infty},\quad\langle u,u\rangle_E = \langle\Phi(u),\Phi(u)\rangle_F,\quad \cE(u,u) = \cD(\Phi(u),\Phi(u)).
\]

From general theory, for any given Dirichlet space there exists one which is regular and equivalent to it.
Our objective with the next few lemmas is to provide a concrete regular representation of $(\cE, \cW)$ on $L^2(\bbR^2,\mu)$. In doing so, we will first describe the new state space and later the map $\Phi$.

The main idea can be summarized as follows.  We know from Corollary~\ref{corro:codimension} that $\cW$  can be obtained from $\cH$ by adding $\psi_0$. Moreover, $C_0(\bbR^2)$ is dense in $\cH$ and  $\psi_0$ is continuous in $\bbR^2_0$.
Thus, to obtain a set of continuous functions which is dense in $\cW$ with respect to $\cE_1$, it suffices to modify the topology of $\bbR^2$ around the origin in order to make $\psi_0$ continuous.
Practically, this can be achieved by considering $\bbR^2_0$ with the metric $d_\cW(x,y):= |x-y|+|\psi_0(x)-\psi_0(y)|$ and completing it. Below, we perform such completion by hand.

We start from the space $\bbR^2_0$ and we enlarge it by adding the points $0_+$, $0_-$ and the interval $(0,\pi/2)$. More precisely, we define the following sets
\begin{equation*}
 \cO_\Theta \ldef (0,\pi/2),\quad \cO \ldef \{0_+,0_-\}\cup \cO_\Theta,
\end{equation*}
and the new state space $\bbR^2_\cW$ by the disjoint union $\bbR^2_\cW  \ldef \bbR^2_0 \cup \cO$.
We have the following trivial inclusions
\[
 \bbR^2_0\subset \bbR^2,\qquad\bbR^2_0\subset \bbR^2_\cW,
\]
but it is not true that $\bbR^2\subset \bbR^2_\cW$. We define a projection map $\pi :\bbR^2_\cW\to \bbR^2$ by
\begin{equation}\label{projection}
 \pi(x)\ldef x,\,\forall x\in \bbR^2_0,\qquad \pi(x) \ldef 0,\,\forall x\in \cO.
\end{equation}
We extend $\psi_0$ as defined in~\eqref{def:psi} to a map defined on the whole $\bbR^2_\cW$ by  setting
\[
 \psi_0(0_+) \ldef 1,\quad \psi_0(0_-) \ldef 0,\quad \psi_0(z) \ldef \sin(z), \forall z \in \cO_\Theta.
\]
The construction of $\bbR^2_\cW$ and of the extension of $\psi_0$ should be understood in the following way. We notice that $\psi_0(x)= {(1-|x|^2)}_+ h(\theta(x))$ with
\begin{equation*}
 h(\theta)\ldef\left \{
 \begin{array}{lll}
  0\;,          & \theta \in [-\pi,-\pi/2], \\
  \cos(\theta), & \theta \in (-\pi/2, 0),   \\
  1\;,          & \theta \in [0,\pi/2],     \\
  \sin(\theta), & \theta \in (\pi/2,\pi),
 \end{array}
 \right.
\end{equation*}
then we extend $\psi_0(r,\theta)$, thought as a function  $\psi_0:]0,\infty[\times S_1\to\bbR$, to a continuous function in $[0,\infty[\times S_1$. Finally, we identify all the points in $\{ 0 \}\times S_1$ that have the same values under this extension (see Figure~\ref{fig:identification}).
The set of these points and  $\bbR^2_0$ form $\bbR^2_\cW$.
\begin{figure}[ht]
 \centering
 \includegraphics[width=\textwidth]{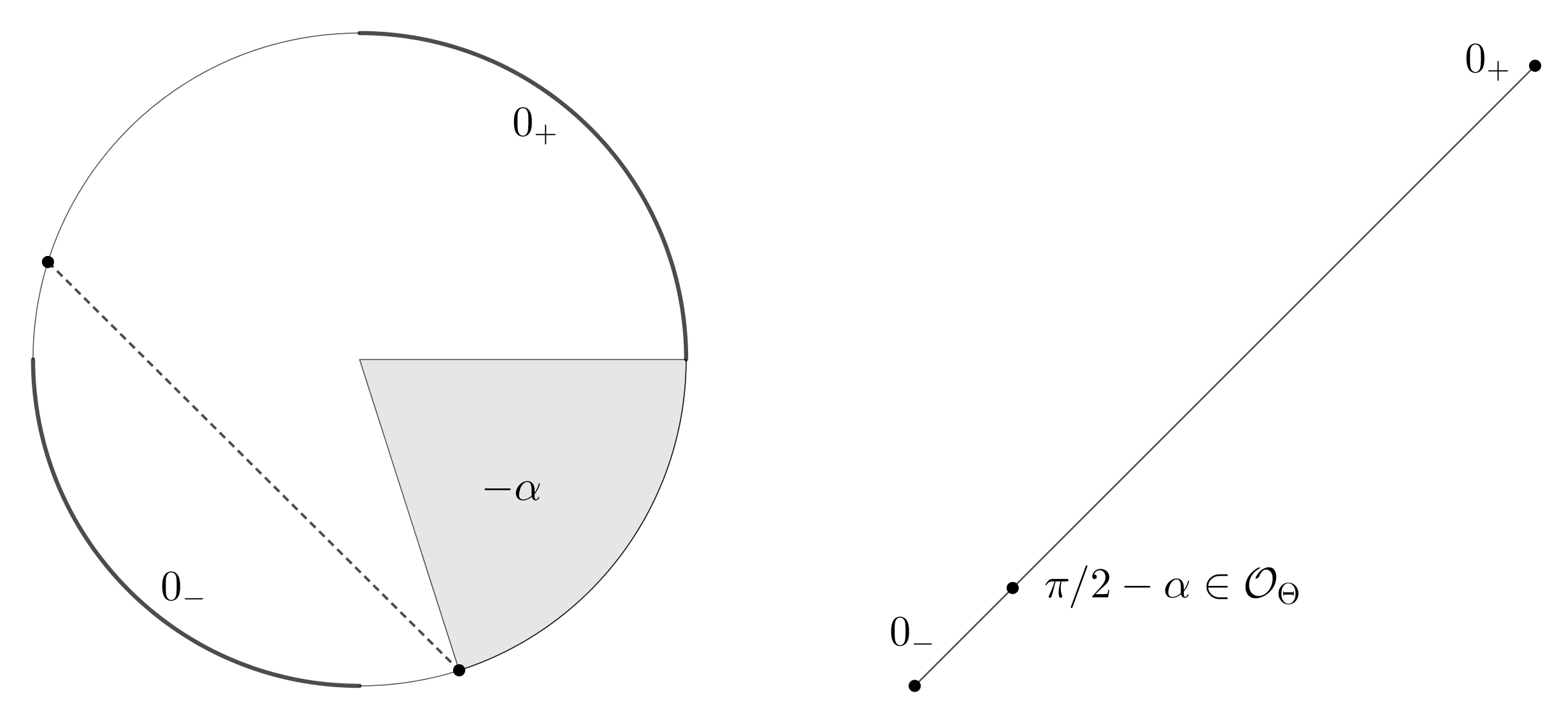}
 \caption{The bold points are those that are going to be identified.}\label{fig:identification}
\end{figure}

We are ready to state the following

\begin{prop} The map $d_{\cW} : \bbR^2_\cW\times \bbR^2_\cW \to [0,\infty)$
 defined by
 \begin{equation}\label{eq:metric}
  d_\cW(x,y) = |\pi(x)-\pi(y)| + |\psi_0(x)-\psi_0(y)|
 \end{equation}
 is a distance on $\bbR^2_{\cW}$. The metric space $(\bbR^2_\cW,d_{\cW})$ is locally compact, separable and complete. Moreover $d_\cW$ and $|\cdot|$ induce the same topology on $\bbR^2_0$.
\end{prop}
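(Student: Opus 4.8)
The plan is to realize $(\bbR^2_\cW,d_\cW)$ isometrically as a subset of Euclidean space and read off every property from there. Consider the map $\Psi\ldef(\pi,\psi_0):\bbR^2_\cW\to\bbR^2\times\bbR\cong\bbR^3$, and equip $\bbR^3$ with the norm $\|(a,c)\|\ldef|a|+|c|$ (Euclidean on the $\bbR^2$-block, absolute value on the last coordinate), which is equivalent to the Euclidean norm. By construction $d_\cW(x,y)=\|\Psi(x)-\Psi(y)\|$, so $d_\cW$ is the pullback of a genuine metric; symmetry and the triangle inequality are therefore automatic, and $d_\cW$ is a metric precisely when $\Psi$ is injective. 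Injectivity I would check by cases: two points of $\bbR^2_0$ are separated by $\pi$ alone (there $\pi$ is the identity); a point of $\bbR^2_0$ and a point of $\cO$ are separated by $\pi$, since the former has nonzero image and the latter has image $0$; and on $\cO$ the map $\psi_0$ takes the three pairwise-disjoint value sets $\{1\}$, $\{0\}$ and $(0,1)$ (the last being $\psi_0(\cO_\Theta)$, on which $\sin$ is injective). Hence $\Psi$ is injective and is an isometry of $(\bbR^2_\cW,d_\cW)$ onto its image $M\ldef\Psi(\bbR^2_\cW)$.

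It then suffices to show that $M$ is closed in $\bbR^3$. Explicitly $M=G\cup(\{0\}\times[0,1])$, where $G\ldef\{(x,\psi_0(x)):x\in\bbR^2_0\}$ is the graph of $\psi_0$ and the segment $\{0\}\times[0,1]$ is the image $\Psi(\cO)$ (with $0_-,\cO_\Theta,0_+$ contributing the values $0$, $(0,1)$, $1$). The key point is that $M$ is exactly the closure of $G$. Away from the origin $\psi_0=(1-|x|^2)_+\,h(\theta(x))$ is continuous on $\bbR^2_0$; the one thing to verify here is that the piecewise function $h$ glues to a continuous function on the circle $S_1$, which one checks at the junctions $\theta=-\pi/2,0,\pi/2$ and at the seam $\theta\equiv\pm\pi$ (there $h=0$ on both sides). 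Consequently the only accumulation points of $G$ not already in $G$ arise along sequences $x_n\to0$, and for these $(1-|x_n|^2)_+\to1$ while $h(\theta(x_n))$ has subsequential limits filling the closure of the range of $h$, namely $[0,1]$. Thus $\overline{G}=G\cup(\{0\}\times[0,1])=M$, so $M$ is closed.

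With $M$ closed in $\bbR^3$ the remaining assertions are immediate: a closed subset of the complete, locally compact, separable space $(\bbR^3,\|\cdot\|)$ is itself complete, locally compact and separable, and these properties transport through the isometry $\Psi$ to $(\bbR^2_\cW,d_\cW)$. Finally, to compare the two metrics on $\bbR^2_0$, note that for $x,y\in\bbR^2_0$ one has $d_\cW(x,y)\geq|\pi(x)-\pi(y)|=|x-y|$, so $d_\cW$-convergence forces $|\cdot|$-convergence; conversely continuity of $\psi_0$ on $\bbR^2_0$ gives that $|\cdot|$-convergence forces $d_\cW$-convergence. Hence the two metrics induce the same topology on $\bbR^2_0$ (equivalently, $\Psi$ restricts to a homeomorphism of $\bbR^2_0$ onto the graph $G$).

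I expect the main obstacle to be the closedness of $M$, and within it the bookkeeping at the origin: one must verify both that $\psi_0$ extends continuously across the circle (so the seam $\theta\equiv\pm\pi$ creates no spurious limit point) and that the added segment $\{0\}\times[0,1]$ is neither too small nor too large, i.e.\ that it captures \emph{exactly} the set of limiting values $\lim\psi_0(x_n)$ as $x_n\to0$. This is precisely the point where the definitions of $\cO$ and of the extension of $\psi_0$ to $\bbR^2_\cW$ are engineered so that the completion works.
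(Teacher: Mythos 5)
Your proof is correct. The paper offers no argument here (the proof is ``left to the reader''), so there is nothing to compare against; your embedding $\Psi=(\pi,\psi_0):\bbR^2_\cW\to\bbR^3$ is a clean way to settle all the claims at once, and you correctly isolate and verify the one nontrivial point, namely that the image is closed because the added segment $\{0\}\times[0,1]$ is exactly the set of limit points of the graph of $\psi_0$ over sequences $x_n\to 0$ (using that $h$ is continuous on $S_1$ with range $[0,1]$ and that $\sin$ is injective on $\cO_\Theta=(0,\pi/2)$).
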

\begin{proof} The proof is rather standard and it is left to the reader.
\end{proof}
\begin{remark} $\psi_0$ and $\pi$ are continuous with respect to $(\bbR^2_\cW,d_\cW)$ by construction. Also, for any $u\in C(\bbR^2)$ we have that $u\circ \pi :\bbR^2_\cW\to \bbR$ is continuous with respect to $d_\cW$ being the composition of two continuous functions.
\end{remark}

On $(\bbR^2_\cW,d_\cW)$ we consider the $\sigma$-algebra of Borel $\cB(\bbR^2_\cW)$, which can be easily seen to be the $\sigma$-algebra generated by $\cB(\bbR^2_0)$ the sets $\{ 0_+\}$, $\{ 0_-\}$ and the Borel $\sigma$-algebra $\cB(\cO_\Theta)$.
We can extend the measure $\mu = \rho dx$ from $\cB(\bbR^2_0)$ to $\cB(\bbR^2_\cW)$ by setting $\mu( \cO) = 0$.

The space $L^2(\bbR^2,\mu)$ can be identified with $L^2(\bbR^2_\cW,\mu)$. Indeed, any element $u\in L^2(\bbR^2,\mu)$ is associated uniquely to an element $\Phi(u)\in L^2(\bbR^2_\cW,\mu)$ such that $u=\Phi(u)$ on $\bbR^2_0$.
The map $\Phi :  L^2(\bbR^2,\mu) \to L^2(\bbR^2_\cW,\mu)$ is clearly an isometry. Now we can define $\oW = \Phi(\cW)$ and $\oE:\oW\times\oW\to [0,\infty]$ by
\[
 \oE(\Phi(u),\Phi(u)) \ldef \cE(u,u).
\]
By construction, the Dirichlet spaces $(\bbR^2,\mu,\cE,\cW)$ and $(\bbR^2_\cW,\mu,\oE,\oW)$ are equivalent via the map $\Phi$.
Since the spaces $(\cW,\cE_1)$ and $(\overline{\cW},\overline{\cE}_1)$ are isomorphic, from now on we shall drop the overline and write $(\cW,\cE_1)$ in place of $(\overline{\cW},\overline{\cE}_1)$. On the other hand we shall always stress on which state space these forms are considered, as the topology plays a fundamental role in the description of the Markov process.
In fact, even though $L^2(\bbR^2,\mu)$ can be identified with $L^2(\bbR^2_\cW,\mu)$, it is not true that $C_0(\bbR^2)$ can be identified with $C_0(\bbR^2_\cW)$.
One rather has the continuous inclusion $C_0(\bbR^2)\hookrightarrow C_0(\bbR^2_\cW)$ via the map $\Pi$ that takes $u:\bbR^2\to\bbR$ to $u\circ \pi  \rdef \Pi(u):\bbR^2_\cW\to \bbR$.

\begin{prop}\label{prop:regularity}
 The symmetric form $(\cE,\cW)$ on $L^2(\bbR^2_\cW,\mu)$ is a strongly local regular Dirichlet form. Moreover, $(\cE,\cW)$ is recurrent and conservative.
\end{prop}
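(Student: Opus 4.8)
The plan is to verify the four asserted properties in turn, treating regularity as the heart of the matter since the state space $\bbR^2_\cW$ was built precisely to furnish a core. The Dirichlet form structure (closedness and the Markovian property) is inherited from the equivalence with $(\cE,\cW)$ on $L^2(\bbR^2,\mu)$: the isometry $\Phi$ preserves $\cE$ and the $L^2$-norm and commutes with the normal contraction $u\mapsto(0\vee u)\wedge 1$, so these properties transport verbatim. For strong locality I would argue directly, exploiting that $\mu(\cO)=0$ and that $\cE(u,v)=\int_{\bbR^2_0}\nabla u\cdot\nabla v\,d\mu$ is computed over $\bbR^2_0$, where $d_\cW$ and the Euclidean metric induce the same topology. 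Given $u,v\in\cW$ with compact support in $\bbR^2_\cW$ and $v\equiv c$ on an open neighbourhood $U$ of $\supp[u]$, one has $\nabla v=0$ a.e.\ on $U\cap\bbR^2_0$, while outside $\supp[u]$ the function $u$ vanishes $\mu$-a.e., whence $\nabla u=0$ a.e.\ on $\bbR^2_0\setminus U$; thus $\nabla u\cdot\nabla v=0$ a.e.\ and $\cE(u,v)=0$.

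For regularity I would exhibit an explicit core. Let $\cC$ be the subalgebra of $C_0(\bbR^2_\cW)$ generated by $\{u\circ\pi:u\in C_0^\infty(\bbR^2)\}$ together with $\psi_0$; each generator is continuous on $\bbR^2_\cW$ (for $u\circ\pi$ because $\pi$ is continuous and proper, for $\psi_0$ by construction) and compactly supported. First, $\cC\subset\cW$: the bounded part $\cW_b=\cW\cap L^\infty$ is an algebra, since for bounded $u,v\in\cW$ the product rule gives $\int|\nabla(uv)|^2 d\mu\le 2(\|u\|_\infty^2\cE(v,v)+\|v\|_\infty^2\cE(u,u))<\infty$; as the generators lie in $\cW_b$, so does every element of $\cC$. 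Second, $\cC$ is $\cE_1$-dense in $\cW$: it contains $\{u\circ\pi:u\in C_0^\infty(\bbR^2)\}+\bbR\psi_0$, whose $\cE_1$-closure is $\cH+\bbR\psi_0=\cW$ by Corollary~\ref{corro:codimension} (the sum of the closed subspace $\cH$ and the line $\bbR\psi_0$ is closed). Third, $\cC$ is uniformly dense in $C_0(\bbR^2_\cW)$ by the locally compact Stone--Weierstrass theorem: $\cC$ vanishes nowhere, and it separates points, since the functions $u\circ\pi$ separate points with distinct images under $\pi$, while $\psi_0$ separates the points lying over the origin, as $\psi_0(0_+)=1$, $\psi_0(0_-)=0$ and $z\mapsto\sin z$ is injective on $\cO_\Theta=(0,\pi/2)$.

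Recurrence I would obtain from~\cite[Theorem 1.6.3]{fukushima2011dirichlet} using the very same radial sequence $u_n$ constructed in the proof of Proposition~\ref{prop:properties}: since $u_n\in\cH\subset\cW$ and $\mu$ ignores $\cO$, the $u_n$ still satisfy $u_n\to1$ $\mu$-a.e.\ and $\cE(u_n,u_n)\to0$ on $\bbR^2_\cW$. Conservativeness then follows from recurrence via~\cite[Lemma 1.6.5]{fukushima2011dirichlet}.

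The main obstacle is the regularity step, and within it the construction of a core simultaneously contained in $\cW$ and uniformly dense in $C_0(\bbR^2_\cW)$. The decisive point -- and the payoff of the whole topological construction of Section~3.3 -- is that $\psi_0$, which fails to be continuous at the origin of $\bbR^2$, becomes continuous on $\bbR^2_\cW$ and exactly separates the split points, so that Stone--Weierstrass applies. The verification that $\cW_b$ is an algebra, and that passing to the algebraic span (rather than the mere linear span $C_0^\infty(\bbR^2)+\bbR\psi_0$) keeps us $\cE_1$-dense inside $\cW$, is the routine but essential bookkeeping that makes this argument go through.
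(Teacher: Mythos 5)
Your proposal is correct and follows essentially the same route as the paper: the same core $\cC$ generated by $\{u\circ\pi : u\in C_0^\infty(\bbR^2)\}$ and $\psi_0$, the same appeal to $\cW=\cH+\bbR\psi_0$ for $\cE_1$-density and to Stone--Weierstrass for uniform density, and the same radial test functions from Proposition~\ref{prop:properties} for recurrence and conservativeness. The extra details you supply (that $\cW_b$ is an algebra so $\cC\subset\cW$, the explicit point-separation via the injectivity of $\psi_0$ on $\cO$, and the direct verification of strong locality) are points the paper leaves implicit or omits as trivial, and they check out.
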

\begin{proof} The fact that $(\cE,\cW)$ is a strongly local Dirichlet form is trivial and we will omit the proof here. What we are mostly interested in showing is regularity.
 Consider the subalgebra $\cC$ of $C_0(\bbR^2_\cW)$ generated by $u\circ\pi$ for $u\in C_0^\infty(\bbR^2)$ and $\psi_0$.
 We shall prove that $\cC$ is a core for $(\cE,\cW)$ on $L^2(\bbR^2_\cW,\mu)$, that is, it is dense in $C_0(\bbR^2_\cW)$ with respect to the uniform topology and in $\cW$ with respect to $\cE_1$.

 First, $\cW = \cH + \bbR\psi_0$ and $C_0^\infty(\bbR^2)$ is dense in $\cH$, it follows that $\Pi(C_0^\infty(\bbR^2)+\bbR \psi_0) \subset \cC$ is dense in $\cW$ with respect to $\cE_1$ and thus $\cC$ is dense in $\cW$.
 Second, it is immediate to see that $\cC$ is a subalgebra of $C_0(\bbR^2_\cW)$ that separates points and vanishes nowhere.
 It follows by the Stone-Weierstrass Theorem that $\cC$ is dense $C_0(\bbR^2_\cW)$ with respect to the uniform topology.

 Recurrence and conservativeness can be proved by employing the same argument and test functions as in Proposition~\ref{prop:properties}.
\end{proof}

As a consequence of Proposition~\ref{prop:regularity} and of Theorem~\ref{thm:huntprocess}, there exists a Hunt process $({\{Y_t\}}_{t\geq 0}, P^\cW_x)$, $x\in\bbR^2_\cW$
which is uniquely determined up to a properly exceptional set and it is associated to the Dirichlet form $(\cE,\cW)$ on $L^2(\bbR^2_\cW,\mu)$.
As $(\cE,\cW)$ is strongly local, we can take $(Y,P_x^\cW)$ to have continuous sample paths for all $x\in \bbR^2_\cW$. Before introducing hitting times of the newly added points, we shall compute the capacity of $\cO$. We denote by $\capa_1^\cW$ the $1$-capacity with respect to $(\cE,\cW)$ on $L^2(\bbR^2_\cW,\mu)$.

\begin{prop} The followings hold:
 \begin{itemize}
  \item[i)] $\capa_1^\cW(\cO_{\Theta}) = 0$,
  \item[ii)] $\capa_1^{\cW}(\{ 0_+, 0_- \}) = \capa_1(\{ 0\})$,
  \item[iii)]  $\capa_1^{\cW}(\{ 0_+\}) = \capa_1^{\cW}(\{ 0_-\}) > 0$.
 \end{itemize}
\end{prop}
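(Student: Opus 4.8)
The plan is to treat the three statements separately, relying throughout on the decomposition $\cW = \cH + \bbR\psi_0$ of Corollary~\ref{corro:codimension}, on the continuity of the trace functionals $\ell^{\pm}$, and on carefully identifying $d_\cW$-neighborhoods of points of $\cO$ with concrete subsets of $\bbR^2_0$. For (i) I would exploit the cone estimate \eqref{ineq:capacityofcones}. The key geometric observation is that a point $z\in\cO_\Theta$ with $\psi_0(z)=\sin z\in(0,1)$ is a $d_\cW$-limit of points of $\bbR^2_0$ lying in $Q_2\cup Q_4$ (where $\psi_0$ takes intermediate values) and is bounded away from $0_+$ and $0_-$. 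Consequently every compact subinterval $[z_1,z_2]\subset\cO_\Theta$ admits, for each $\epsilon>0$, an open $d_\cW$-neighborhood contained in the image of $A_{\epsilon,\delta_0}^{+}\cup A_{\epsilon,\delta_0}^{-}$ for a fixed $\delta_0>0$ depending only on $z_1,z_2$. Lifting the cone test functions through $\Phi$ and using \eqref{ineq:capacityofcones}, the capacity of $[z_1,z_2]$ is bounded by $\tfrac{2C}{\delta_0}\int_0^{2\epsilon}\tfrac{a(r)}{r}\,dr$, which tends to $0$ as $\epsilon\to0$ by \eqref{ass:integrability}. Writing $\cO_\Theta$ as a countable increasing union of such subintervals and invoking countable subadditivity of capacity then yields $\capa_1^\cW(\cO_\Theta)=0$; staying away from the endpoints is exactly what prevents the $1/\delta$ blow-up in \eqref{ineq:capacityofcones}.

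For (iii), positivity follows from continuity of $\ell^{+}$. Any open neighborhood $G$ of $0_+$ contains a sector $\{x\in Q_1:|x|<\delta\}$, since such points converge to $0_+$ in $d_\cW$; hence any $u\in\cW$ with $u\geq1$ $\mu$-a.e.\ on $G$ satisfies $\bar u(r)\geq1$ for a.e.\ $r<\delta$, so $\ell^{+}(u)\geq1$. By the continuity estimate behind \eqref{eq:ellplus}, $|\ell^{+}(u)|\leq C\,\cE_1(u,u)^{1/2}$, whence $\cE_1(u,u)\geq C^{-2}$ for every competitor and $\capa_1^\cW(\{0_+\})\geq C^{-2}>0$; the same argument with $\ell^-$ and $Q_3$ treats $0_-$. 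For the equality I would use the rotation $R$ by $\pi$: it preserves $\rho$ (each of $Q_1\cup Q_3$ and $Q_2\cup Q_4$ is invariant), hence preserves $(\cE,\mu)$, and it carries sequences approaching the origin through $Q_1$ to sequences approaching through $Q_3$. Thus $R$ lifts to a $d_\cW$-homeomorphism of $\bbR^2_\cW$ preserving $\mu$ and $\cE$ and exchanging $0_+\leftrightarrow 0_-$, so by invariance of capacity under isomorphisms of Dirichlet spaces $\capa_1^\cW(\{0_+\})=\capa_1^\cW(\{0_-\})$.

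For (ii), I would show that the equilibrium potential of $K:=\{0_+,0_-\}$ is exactly $u_1$, the $1$-equilibrium potential of $\{0\}$ for $(\cE,\cH)$. Since $u_1\in\cH$ we have $\ell^+(u_1)=\ell^-(u_1)=\tilde u_1(0)=1$, so $\tilde u_1\equiv1$ on $K$ and $u_1$ is admissible. Using (iii), so that ``q.e.\ on $K$'' means ``on $K$'', write the constraint as the affine set $u_1+\cN_K$ with $\cN_K=\{v\in\cW:\tilde v=0\text{ on }K\}$; any $v\in\cN_K$ decomposes as $v=g+c\psi_0$ with $g\in\cH$, and $\tilde v(0_-)=\tilde g(0)=0$ together with $\tilde v(0_+)=\tilde g(0)+c=0$ force $c=0$ and $g\in\cH_0$. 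The potential identity $\cE_1(u_1,v)=\tilde v(0)\,\nu_1(\{0\})$ then gives $\cE_1(u_1,v)=0$, i.e.\ $u_1\perp_{\cE_1}\cN_K$, so $u_1$ minimizes $\cE_1$ over the affine set. Since the equilibrium potential attains the value $1$ q.e.\ on $K$, this minimum equals the capacity, and therefore $\capa_1^\cW(K)=\cE_1(u_1,u_1)=\capa_1(\{0\})$.

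The main obstacle is not any single computation but the careful translation between the exotic topology of $\bbR^2_\cW$ and subsets of $\bbR^2$: identifying precisely which Euclidean sectors lie inside a given $d_\cW$-neighborhood of $0_+$, $0_-$, or $\cO_\Theta$, and verifying that the rotation $R$ genuinely descends to a homeomorphism of the completed space (in particular that its induced action on $\cO_\Theta$ is well defined, which is harmless since $\cO_\Theta$ is $\cW$-polar by (i)). Once these topological identifications are secured, each capacity statement reduces to the cone estimate \eqref{ineq:capacityofcones}, the continuity of $\ell^{\pm}$, or the potential identity for $u_1$, all of which are already available.
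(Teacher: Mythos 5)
Your proposal is correct, and while part (i) follows essentially the paper's own route (cone test functions from \eqref{ineq:capacityofcones} extended continuously to $\cO$, applied to subintervals kept away from the endpoints of $\cO_\Theta$, then an exhaustion/subadditivity argument), parts (ii) and (iii) take a genuinely different path. For (ii) the paper argues via the bijection $A\mapsto\pi^{-1}(A)$ between open neighborhoods of $0$ and of $\cO$ and asserts $\capa_1(A)=\capa_1^\cW(\pi^{-1}(A))$ ``by the very definition of capacity''; this is actually only immediate in one direction, since the competitor class enlarges from $\cH$ to $\cW=\cH+\bbR\psi_0$. Your equilibrium-potential argument --- $u_1$ is admissible for $K=\{0_+,0_-\}$ because $\ell^+(u_1)=\ell^-(u_1)=1$, and $u_1\perp_{\cE_1}\cN_K=\cH_0$ by the potential identity $\cE_1(u_1,v)=\tilde v(0)\nu_1(\{0\})$ --- makes precise exactly why adding $\psi_0$ cannot lower the infimum, so your route is slightly longer but fills in a step the paper glosses over. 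For (iii) the paper deduces positivity from (ii) together with $\capa_1^\cW(K)\le\capa_1^\cW(\{0_+\})+\capa_1^\cW(\{0_-\})$ and symmetry, whereas you prove positivity directly from the continuity of $\ell^+$ on $(\cW,\cE_1)$ and the fact that every $d_\cW$-neighborhood of $0_+$ contains a sector of $Q_1$; both arguments are sound, and yours has the advantage of not depending on (ii). The one point needing care in your version is the rotation $R$ by $\pi$: it does \emph{not} extend to a homeomorphism of $\bbR^2_\cW$ on $\cO_\Theta$ (the gluing of $Q_2$- and $Q_4$-angles by equal $h$-values is not $R$-equivariant), but you correctly flag this and it is indeed harmless since $R$ preserves $\mu$ and $\cE_1$, only the $\mu$-a.e.\ constraint on $\bbR^2_0$ matters for capacities of open sets, and $R$ exchanges cofinal neighborhood bases of $0_+$ and $0_-$; this matches the paper's appeal to the ``inner symmetry of the model.''
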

\begin{proof} We start by proving i). It follows from~\cite[Theorem 2.1.1]{fukushima2011dirichlet} that
 \[
  \capa_1^\cW((\pi/2, \pi)) = \sup_n \,\capa_1^\cW((\pi/2+1/n, \pi-1/n)).
 \]
 Thus, it suffices to show that for $\delta>0$ we have $\capa_1^\cW((\pi/2+\delta, \pi-\delta)) = 0$. Define
 \[
  \bar{A}_{\epsilon,\delta} \ldef A^+_{\epsilon,\delta} \cup A^-_{\epsilon,\delta}  \cup (\pi/2+\delta, \pi-\delta)
 \]
 with $A^+_{\epsilon,\delta}, A^-_{\epsilon,\delta}$ defined as in~\eqref{eq:cones}.
 It is easy to see that an inequality analogous to~\eqref{ineq:capacityofcones} holds true for $\bar{A}_{\epsilon,\delta}$. In fact, it suffices to consider the same test functions and extend them continuously to $\cO$. Therefore,
 \[
  \capa^\cW_1((\pi/2+\delta, \pi-\delta)) \leq \limsup_{\epsilon\to 0}\,\capa_1^\cW(\bar{A}_{\epsilon,\delta} ) = 0.
 \]
 In view of i), $\capa_1^\cW(\cO) = \capa_1^\cW(\{ 0_+,0,_-\})$. Thus, to prove ii) we just have to show that $\capa_1^{\cW}(\cO) = \capa_1(\{ 0\})$. This follows by the simple observation that there is a one to one correspondence between open neighborhoods of  $\{ 0\}$ in $\bbR^2$ and of $\cO$ in $\bbR^2_\cW$, via the map $\pi$.
 Moreover, by the very definition of capacity it is clear that
 \[
  \capa_1(A) = \capa_1^\cW(\pi^{-1}(A))
 \]
 for any neighborhood $A$ of $0$. Taking the infimum over all the open neighborhoods of $0$ leads to the conclusion. For iii), we observe first that $\capa^\cW_1(\{0_+\})=\capa^\cW_1(\{0_-\})$
 is just a consequence of the inner symmetry of the model. Moreover, thanks to ii)
 \[
  0<\capa_1(\{0\}) = \capa^\cW_1(\{0_+,0_-\}) \leq \capa^\cW_1(\{0_+\})+\capa^\cW_1(\{0_-\}),
 \]
 where the last inequality is a straightforward consequence of~\cite[Lemma 2.1.2]{fukushima2011dirichlet}. Finally, we get
 \[
  0 < \capa^\cW_1(\{0_+\})+\capa^\cW_1(\{0_-\}) = 2 \capa^\cW_1(\{0_+\})
 \]
 and the conclusion follows.
\end{proof}

Since $\cO_\Theta$ has zero capacity, it is exceptional. By~\cite[Theorem 4.4.1]{fukushima2011dirichlet} we can find a properly exceptional set $\cN$ such that $\cO_\Theta\subset \cN$, in particular
\begin{equation}\label{eq:properlyexc}
 P_x^\cW(Y_t\in \cO_\Theta, \mbox{ for some } t\geq 0)=0,\quad \forall x\in \bbR^2_\cW\setminus \cN.
\end{equation}
Let us define the following hitting times
\begin{equation}\label{eq:hittingtimes}
 \sigma_+\ldef\inf \{t>0 : Y_t = 0_+\},\quad \sigma_-\ldef \inf \{t>0 : Y_t = 0_-\},
\end{equation}
and
\begin{equation}\label{eq:hittingtimeO}
 \sigma \ldef \inf \{t>0 : Y_t \in \cO \}.
\end{equation}
Observe that~\eqref{eq:properlyexc} implies that for q.e.\ $x\in \bbR^2_\cW$ we have
\begin{equation}\label{noboundary}
 \sigma = \sigma_+\wedge \sigma_-\;,\quad P_x^\cW\mbox{-almost surely}.
\end{equation}
In analogy with the previous section we also define the hitting probabilities
\begin{equation}\label{eq:hitprobW}
 \varphi^+(x)\ldef P_x^\cW(\sigma_+ < \sigma_-),\quad   \varphi^-(x)\ldef P_x^\cW(\sigma_- < \sigma_+)
\end{equation}
and the $\alpha$-order hitting probabilities
\begin{equation}\label{eq:potW}
 u^+_\alpha(x)\ldef E_x^\cW\left[e^{\alpha \sigma}; \sigma = \sigma_+\right],\quad   u^-_\alpha(x)\ldef E_x^\cW\left[e^{\alpha \sigma}; \sigma = \sigma_-\right].
\end{equation}
As before, we can consider the part of the process $(Y,P^\cW_x)$ on $\bbR^2_\cW\setminus \{ 0_+, 0_-\}$, that is, the process $(Y^0,P^\cW_x)$, $x\in \bbR^2_\cW\setminus \{ 0_+, 0_-\}$ obtained from $Y$ by killing the sample paths upon hitting the set $\{ 0_+, 0_-\}$. Moreover, we can also consider the part  of $Y$ on $\bbR^2_0$ which we denote by $(Y^*, P^\cW_x)$.

\begin{remark}\label{rem:killed}
 In view of~\eqref{eq:properlyexc} one has
 \[
  P^\cW_x(Y^*_t = Y^0_t,\,\forall t\geq 0) = 1,\quad \mbox{q.e.\ }x\in \bbR^2_0.
 \]
 Moreover, we one can easily see that $(Y^*,P^\cW_x)$ is associated to the Dirichlet form $(\cE,\cH_0)$ on $L^2(\bbR^2_0,\mu)$, and thus it is equivalent to $(X^0,P_x)$, $x\in \bbR^2_0$ defined in the previous section.
\end{remark}
We denote by
\begin{equation*}
 p^\cW_t f(x) \ldef E^\cW_x[f(Y_t)],\quad G^\cW_\alpha f(x) \ldef E^\cW_x\left [\int_0^\infty e^{-\alpha t} f(Y_t)\,dt\right]
\end{equation*}
the transition function and the resolvent of $Y$. Similarly, we note by $p^{\cW,0}_t$ and $G^{\cW,0}_\alpha$ the same quantities for $Y^0$.

\begin{lemma}\label{lem:ident} There exists a properly exceptional set $\cN\subset \bbR^2_0$
 such that for all $f\in L^2(\bbR^2_0,\mu)$ all $\alpha>0$ and $t>0$
 \[
  G^0_\alpha f(x) = G^{\cW, 0}_\alpha f(x), \quad p^0_t f(x) = p^{\cW, 0}_t f(x),\qquad \forall x\in \bbR^2_0\setminus \cN.
 \]
\end{lemma}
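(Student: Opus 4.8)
The plan is to read off both required identities from Remark~\ref{rem:killed}, which already supplies the two structural inputs, and to repackage them at the level of resolvents and transition kernels. The first input is that the part $Y^*$ of $Y$ on $\bbR^2_0$ and the part $Y^0$ of $Y$ on $\bbR^2_\cW\setminus\{0_+,0_-\}$ are indistinguishable for q.e.\ starting point, because the only extra set one could exit into, namely $\cO_\Theta$, is properly exceptional by~\eqref{eq:properlyexc} and is therefore never visited. The second input is that $(Y^*,P^\cW_x)$ is associated with $(\cE,\cH_0)$ on $L^2(\bbR^2_0,\mu)$, exactly the Dirichlet form to which the killed process $(X^0,P_x)$ of Section~\ref{sec:H0H} is attached; hence $X^0$ and $Y^*$ are equivalent in the sense of~\cite[Theorem 4.2.8]{fukushima2011dirichlet}.

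Concretely I would argue in two steps and then merge. Since $d_\cW$ and $|\cdot|$ agree topologically on $\bbR^2_0$, the two processes $X^0$ and $Y^*$ are $\mu$-symmetric Hunt processes on the same Borel space carrying the same regular Dirichlet form, so equivalence provides a properly exceptional set $\cN_1\subset\bbR^2_0$ off which their transition functions coincide for every bounded Borel $f$, and thus also their $\alpha$-resolvents after applying $\int_0^\infty e^{-\alpha t}\,dt$. Separately, the indistinguishability of $Y^*$ and $Y^0$ gives a properly exceptional set $\cN_2\subset\bbR^2_0$ off which $p^{\cW,*}_t f(x)=p^{\cW,0}_t f(x)$ and $G^{\cW,*}_\alpha f(x)=G^{\cW,0}_\alpha f(x)$. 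Setting $\cN\ldef\cN_1\cup\cN_2$, still properly exceptional as a finite union, chains $p^0_t f=p^{\cW,*}_t f=p^{\cW,0}_t f$ and $G^0_\alpha f=G^{\cW,*}_\alpha f=G^{\cW,0}_\alpha f$ on $\bbR^2_0\setminus\cN$, which is the assertion.

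Since the conceptual content already sits in Remark~\ref{rem:killed}, the only things to watch are bookkeeping rather than analysis. I would check that the identification of $L^2(\bbR^2,\mu)$ with $L^2(\bbR^2_\cW,\mu)$ afforded by $\mu(\cO)=0$ lets the same $f\in L^2(\bbR^2_0,\mu)$ be fed to all three kernels, and that a finite union of properly exceptional sets remains properly exceptional for each of the equivalent processes. The mildest technical point, and the closest thing to an obstacle, is upgrading coincidence of transition functions from bounded Borel test functions to the stated $L^2$ formulation and passing to resolvents; both are routine, the former by a density argument and the latter by the Laplace transform in $t$.
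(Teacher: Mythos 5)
Your proposal is correct and follows essentially the same route as the paper, whose entire proof is the one-line observation that the lemma follows directly from Remark~\ref{rem:killed}; you have simply spelled out the chain $X^0\sim Y^*\sim Y^0$ and the union of the two properly exceptional sets that the remark leaves implicit.
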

\begin{proof} This follows directly from Remark~\ref{rem:killed}.
\end{proof}

In the next proposition we show that the projection of $(Y_t)_{t\geq 0}$ on $\bbR^2$ via $\pi$ has the property that it exits the origin from the same corner it entered. In particular, $(\pi(Y_t))_{t\geq 0}$ is not strongly Markov.

\begin{prop}  Fix $\epsilon \in (0,\pi/6)$ and define the subsets of $\bbR^2$
\[
C^+_\epsilon = (0,1)\times (-\epsilon,\pi/2+\epsilon) \cup\{0\}, \quad C^-_\epsilon = (0,1)\times (-\pi-\epsilon,-\pi/2+\epsilon)  \cup\{0\}.
\]
 Then, for q.e.\ $x\in\bbR^2_{0}$ and $P^\cW_x$-almost all $\omega$ there exists $\delta = \delta(\epsilon,\omega)$ such that either
 \begin{equation*}
\pi(Y_t)\in C^+_\epsilon,\,\forall t\in(\sigma-\delta,\sigma+\delta) \quad\text{or}\quad\pi(Y_t)\in C^-_\epsilon,\,\forall t\in(\sigma-\delta,\sigma+\delta).
 \end{equation*}
 where $\pi:\bbR^\cW\to\bbR^2$ was defined in~\eqref{projection}.
\end{prop}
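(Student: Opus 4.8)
The plan is to read off the behaviour of $\pi(Y_t)$ near $\sigma$ directly from the continuity of the trajectories of $Y$ in the metric space $(\bbR^2_\cW,d_\cW)$, together with the explicit form of $\psi_0$. Since $(\cE,\cW)$ on $L^2(\bbR^2_\cW,\mu)$ is strongly local, recurrent and conservative (Proposition~\ref{prop:regularity}), for q.e.\ starting point the associated Hunt process $Y$ has sample paths that are continuous on $[0,\infty)$ in $(\bbR^2_\cW,d_\cW)$. On the event $\{\sigma<\infty\}$ we will have $Y_\sigma\in\{0_+,0_-\}$, and the proof splits into the two symmetric cases $Y_\sigma=0_+$ (yielding $C^+_\epsilon$) and $Y_\sigma=0_-$ (yielding $C^-_\epsilon$).

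First I would fix a single properly exceptional set $\cN\subset\bbR^2_0$ off of which all the following hold $P^\cW_x$-a.s.: the paths are $d_\cW$-continuous; one has $\sigma=\sigma_+\wedge\sigma_-$ by~\eqref{noboundary}, so that on $\{\sigma<\infty\}$ continuity of the path and the avoidance~\eqref{eq:properlyexc} of $\cO_\Theta$ force $Y_\sigma\in\{0_+,0_-\}$; and $\sigma<\infty$. The last point follows because the part $Y^*$ of $Y$ on $\bbR^2_0$ is equivalent to $X^0$ (Remark~\ref{rem:killed}) and $X$ hits the origin almost surely by Proposition~\ref{prop:irreducible}, so that $Y$ hits $\{0_+,0_-\}$ in finite time. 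Intersecting the corresponding properly exceptional sets produces $\cN$.

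The crux is a purely deterministic estimate: for every $\epsilon\in(0,\pi/6)$ there is $\eta=\eta(\epsilon)\in(0,1)$ such that $\pi\big(B_{d_\cW}(0_+,\eta)\big)\subset C^+_\epsilon$ and $\pi\big(B_{d_\cW}(0_-,\eta)\big)\subset C^-_\epsilon$. Indeed $d_\cW(z,0_+)<\eta$ means $|\pi(z)|<\eta$ (so the radial coordinate lies in $[0,1)$) and $|\psi_0(z)-1|<\eta$, while $\pi(0_+)=0\in C^+_\epsilon$ since the origin belongs to the cone. For $z\in\bbR^2_0$ we have $\psi_0(z)=(1-|z|^2)_+\,h(\theta(z))\le h(\theta(z))$, hence $h(\theta(z))>1-\eta$; as $h$ is continuous with $h=1$ exactly on $[0,\pi/2]$, $h=\sin$ on $(\pi/2,\pi)$ and $h=\cos$ on $(-\pi/2,0)$, taking $\eta<1-\cos\epsilon$ forces $\theta(z)\in(-\epsilon,\pi/2+\epsilon)$, i.e.\ $\pi(z)\in C^+_\epsilon$. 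The statement for $0_-$ is proved the same way, now using $\psi_0(0_-)=0$ and that $h$ vanishes precisely on $[-\pi,-\pi/2]$: from $d_\cW(z,0_-)<\eta$ one gets $h(\theta(z))\le\psi_0(z)/(1-\eta^2)<\eta/(1-\eta^2)$, and for $\eta$ small enough this confines $\theta(z)$ to the angular range of $C^-_\epsilon$.

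It remains to assemble these facts. Fix $x\in\bbR^2_0\setminus\cN$ and a trajectory $\omega$ in the full-measure set above with, say, $Y_\sigma=0_+$. With $\eta=\eta(\epsilon)$ as in the estimate, continuity of $t\mapsto Y_t$ at $\sigma$ furnishes $\delta=\delta(\epsilon,\omega)>0$ with $d_\cW(Y_t,0_+)<\eta$ for all $t\in(\sigma-\delta,\sigma+\delta)$; the estimate then gives $\pi(Y_t)\in C^+_\epsilon$ throughout that interval. For $t\ne\sigma$ close to $\sigma$ the path avoids $\cO_\Theta$, and since $d_\cW(0_+,0_-)=1>\eta$ it cannot be at $0_-$, so $Y_t$ is either a point of $\bbR^2_0$ with angle in $(-\epsilon,\pi/2+\epsilon)$ and radius in $(0,1)$, or the point $0_+$ itself; both project into $C^+_\epsilon$. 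The case $Y_\sigma=0_-$ is the mirror image, yielding $C^-_\epsilon$. The only genuinely delicate point is the deterministic estimate, namely verifying that in the metric $d_\cW$ the competition between the Euclidean term $|\pi(z)|$ and the term $|\psi_0(z)-\psi_0(0_\pm)|$ quantitatively pins the angle to the prescribed cone; the probabilistic input reduces to path continuity and the exceptional-set bookkeeping already developed above.
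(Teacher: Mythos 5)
Your proof is correct and follows essentially the same route as the paper's: the paper observes that $\pi^{-1}(C^+_\epsilon)$ and $\pi^{-1}(C^-_\epsilon)$ are disjoint open neighborhoods of $0_+$ and $0_-$ in $(\bbR^2_\cW,d_\cW)$ and concludes from $Y_\sigma\in\{0_+,0_-\}$ together with continuity of the sample paths, exactly as you do. The only difference is that you make the neighborhood claim quantitative (the explicit $\eta(\epsilon)$-ball computation with $h$), which the paper dismisses as ``immediate to see''; your added bookkeeping of the exceptional sets and of $\sigma<\infty$ is a welcome bit of extra care but not a change of method.
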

\begin{proof} It is  immediate to see that $\pi^{-1}(C_\epsilon^+),\pi^{-1}(C_\epsilon^-)\subset \bbR^2_\cW$ are disjoint open  neighborhoods of $0_+$ and $0_-$ respectively.
As observed in~\eqref{noboundary}, for q.e.\ $x\in \bbR^2_\cW$ we have
\[
 \sigma = \sigma_+\wedge \sigma_-\;,\quad P_x^\cW\mbox{-almost surely}.
\]
Morever, $(Y,P_x^\cW)$ has continuous sample paths in $\bbR^2_\cW$ for quasi every point  $x\in \bbR^2_\cW$ because $(\cE,\cW)$ is a strongly local Dirichlet form.

Since  $Y_{\sigma} \in \{0_+,0_-\}$ $P_x^\cW\mbox{-almost surely}$ for q.e.\ $x\in\bbR^2_\cW$, the existence of $\delta$ as in the statement of the proposition follows from the continuity of the sample paths.
\end{proof}

\begin{prop}\label{prop:irreducibility} The Dirichlet form $(\cE,\cW)$ on $L^2(\bbR^2_\cW,\mu)$ is irreducible. In particular, $\varphi^+(x)+\varphi^-(x) = 1$, for q.e.\ $x\in \bbR^2_{\cW}$. Moreover, $\varphi^+(x),\varphi^-(x)>0$ for q.e.\ $x\in \bbR^2_0$.
\end{prop}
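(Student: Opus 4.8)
The plan is to mirror the proof of Proposition~\ref{prop:irreducible}: deduce irreducibility of $(\cE,\cW)$ from the already established irreducibility of the killed form $(\cE,\cH_0)$, then read off $\varphi^++\varphi^-=1$ from recurrence, and finally obtain strict positivity by an invariance argument combined with the $\pi$-rotation symmetry of the model. For irreducibility I would argue exactly as before: it suffices to verify $\langle 1_E, G^\cW_\alpha 1_F\rangle>0$ for all Borel $E,F\subset\bbR^2_\cW$ of positive measure, and since $\mu(\cO)=0$ we may assume $E,F\subset\bbR^2_0$. Killing can only decrease the resolvent, so $G^\cW_\alpha 1_F\geq G^{\cW,0}_\alpha 1_F$, and by Lemma~\ref{lem:ident} the latter equals $G^0_\alpha 1_F$ q.e.\ on $\bbR^2_0$. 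Hence
\[
 \langle 1_E, G^\cW_\alpha 1_F\rangle \geq \langle 1_E, G^0_\alpha 1_F\rangle>0,
\]
the strict inequality being the irreducibility of $(\cE,\cH_0)$ from Proposition~\ref{prop:irreducible}.

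For the identity $\varphi^++\varphi^-=1$ I would combine irreducibility with recurrence. Since $(\cE,\cW)$ is recurrent by Proposition~\ref{prop:regularity} and $\capa_1^\cW(\cO)=\capa_1(\{0\})>0$, \cite[Theorem 4.7.1]{fukushima2011dirichlet} yields $P^\cW_x(\sigma<\infty)=1$ for q.e.\ $x\in\bbR^2_\cW$, while by~\eqref{noboundary} we have $\sigma=\sigma_+\wedge\sigma_-$ q.e. As $Y_\sigma$ is q.e.\ a single point of $\{0_+,0_-\}$, the events $\{\sigma_+<\sigma_-\}$ and $\{\sigma_-<\sigma_+\}$ are disjoint and cover $\{\sigma<\infty\}$ up to a null set, so $\varphi^+(x)+\varphi^-(x)=1$ q.e.

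The delicate part is strict positivity. The key claim is that $A\ldef\{\varphi^+=0\}=\{\varphi^-=1\}$ is invariant for the killed process $X^0$ on $\bbR^2_0$ (which by Remark~\ref{rem:killed} governs $Y$ before $\sigma$). To see this I would fix $x\in A$ and apply the strong Markov property at a deterministic time $t<\sigma$: on $\{t<\sigma\}$ neither $0_+$ nor $0_-$ has yet been reached, so
\[
 E_x\big[1_{\{t<\sigma\}}\,\varphi^-(Y_t)\big]=P_x(\sigma_-<\sigma_+,\,t<\sigma)=P_x(t<\sigma),
\]
the last equality using $\varphi^+(x)=0$; since $\varphi^-\leq1$ this forces $\varphi^-(Y_t)=1$, i.e.\ $Y_t\in A$, for a.e.\ $t<\sigma$. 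Irreducibility of $(\cE,\cH_0)$ then gives $\mu(A)\in\{0,\mu(\bbR^2_0)\}$. To exclude the full-measure alternative I would invoke the rotation by $\pi$, $R(x)=-x$, which preserves $\mu$ and $\cE$ and extends to a homeomorphism of $\bbR^2_\cW$ interchanging $0_+$ and $0_-$; it therefore maps exceptional sets to exceptional sets and satisfies $\varphi^-=\varphi^+\circ R$ q.e. If $\varphi^+=0$ q.e.\ then also $\varphi^-=0$ q.e., contradicting $\varphi^++\varphi^-=1$. Hence $\mu(A)=0$, i.e.\ $\varphi^+>0$ q.e., and the same symmetry gives $\varphi^->0$ q.e.

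The main obstacle lies in making this last step fully rigorous at the level of the Dirichlet form rather than the heuristic process: one must work with quasi-continuous versions of $\varphi^\pm$, control the various exceptional sets so that the Markov-property identity holds simultaneously for q.e.\ starting point, and verify carefully that $R$ genuinely induces a symmetry of the regularized Dirichlet space $(\bbR^2_\cW,\mu,\cE,\cW)$ exchanging the two split points (the continuity of the extended map at $0_+,0_-$ following from the fact that approach from $Q_1$ is sent to approach from $Q_3$). Everything else is routine given the results proved earlier in this section.
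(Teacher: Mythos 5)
Your argument for irreducibility and for $\varphi^++\varphi^-=1$ is essentially identical to the paper's: reduce to the irreducibility of $(\cE,\cH_0)$ via $\mu(\cO)=0$ and the resolvent comparison $G^\cW_\alpha\geq G^{\cW,0}_\alpha=G^0_\alpha$ (Lemma~\ref{lem:ident}), then invoke recurrence and \cite[Theorem 4.7.1]{fukushima2011dirichlet} together with $\sigma=\sigma_+\wedge\sigma_-$ q.e. Where you genuinely diverge is the strict positivity of $\varphi^\pm$. The paper's route is shorter: it considers the part process $W$ of $Y$ on $\bbR^2_\cW\setminus\{0_-\}$, observes that it is still irreducible, and applies \cite[Theorem 4.7.1]{fukushima2011dirichlet} once more to the hitting time of $\{0_+\}$ (a set of positive $\capa_1^\cW$-capacity), giving $\varphi^+=P^\cW_x(\tau<\infty)>0$ q.e.\ on $\bbR^2_\cW\setminus\{0_-\}$ directly, with no need for a dichotomy or a symmetry to break it. Your route instead establishes that $\{\varphi^+=0\}$ is an invariant set for the killed process (the Markov-property computation you sketch is correct, and for a symmetric semigroup ``starting in $A$ one stays in $A$'' does upgrade to invariance of $A$ in the Dirichlet-form sense), deduces from irreducibility of $(\cE,\cH_0)$ that this set is null or co-null, and rules out the co-null case by the $\pi$-rotation $R(x)=-x$, which exchanges $0_+$ and $0_-$. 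This works --- the paper itself appeals to the same ``inner symmetry'' to get $\capa_1^\cW(\{0_+\})=\capa_1^\cW(\{0_-\})$ --- but it costs you the extra technical burden you already identify (Borel/quasi-continuous versions of $\varphi^\pm$, uniform exceptional sets, and a verification that $R$ extends to an automorphism of the regularized Dirichlet space), all of which the paper's one-line use of the part process avoids. If you want the minimal argument, prove irreducibility of the part of $Y$ on $\bbR^2_\cW\setminus\{0_-\}$ (which follows exactly as your first step) and cite Theorem 4.7.1 again.
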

\begin{proof} Irreducibility follows in the same way as Proposition~\ref{prop:irreducible}, using the irreducibility of $(\cE,\cH_0)$ on $L^2(\bbR^2_0,\mu)$ and the fact that $\mu(\cO)=0$ by construction.
 From the irreducibility and~\cite[Theorem 4.7.1]{fukushima2011dirichlet} we get that for q.e.\ $x\in \bbR^2_\cW$
 \[
  \varphi^+(x)+\varphi^-(x) = P_x^\cW(\sigma<\infty) =1.
 \]

 We prove now the last part of the statement, we just show $\varphi^+>0$ q.e.\ as the other is similar.
 We notice that the part of $(Y,P_x^\cW)$ on $\bbR^\cW\setminus \{ 0_-\}$, which we call $W$, is still an irreducible process. Call $\tau\ldef\inf \{ t>0 : W_t = 0_+ \}$, then by~\cite[Theorem 4.7.1]{fukushima2011dirichlet}
 \[
  \varphi^+(x)=P^\cW_x(\sigma_+<\sigma_-) = P_x^\cW(\tau<\infty) > 0,\quad \mbox{q.e. }x\in \bbR^2_\cW\setminus \{0_-\}.
 \]
\end{proof}

Proposition~\ref{prop:irreducibility} tells us that we can apply the theory of many-point symmetric extensions~\cite[Theorem 7.7.3]{chen2012symmetric}. This allows to describe the process $(Y,P^\cW_x)$ via quantities that depend only on the process associated to $(\cE,\cH_0)$ on $L^2(\bbR^2_0,\mu)$. To present the theorem we shall need the following additional notation. Define
\begin{equation}\label{eq:energyfun}
 \gamma^{+-}\ldef \lim_{t\to 0}\frac{1}{t}\langle\varphi^+-p_t^0 \varphi^+,\varphi^-\rangle
\end{equation}
and
\begin{equation*}
 \gamma^{+-}_\alpha \ldef \alpha \langle u_\alpha^+,\varphi^-\rangle, \quad
 \gamma^{++}_\alpha \ldef \alpha \langle u_\alpha^+,\varphi^+\rangle.
\end{equation*}
Notice that the limit~\eqref{eq:energyfun} exists since $\varphi^+$ and $\varphi^-$ are excessive with respect to $p_t^0$, because $p_t^0 \varphi^+ \uparrow \varphi^+$ and $p_t^0 \varphi^- \uparrow \varphi^-$ when $t\downarrow 0$.
Moreover, $\gamma^{++}_\alpha,\gamma^{+-}_\alpha$ are well defined since $u^+_\alpha+u^-_\alpha = u_\alpha\in L^1(\bbR^2_\cW,\mu)$ by Theorem~\ref{thm:char}.
\begin{theorem}\label{thm:semigroupW}
 For any $\alpha$ and $g\in L^2(\bbR^2_\cW,\mu)$ let $\phi \ldef G^\cW_\alpha g|_{\{ 0_+, 0_-\}}$, then $\phi$ satisfies
 \begin{align*}
  \phi(0_+) = & \frac{\gamma^{++}_\alpha \langle u_\alpha^+,g\rangle - \gamma^{+-}_\alpha \langle u_\alpha^-,g\rangle + \gamma^{+-}\langle
   u_\alpha,g\rangle}{(\gamma_\alpha^{++}+\gamma_\alpha^{+-})(\gamma_\alpha^{++}-\gamma_\alpha^{+-}+2 \gamma^{+-})}                        \\
  \phi(0_-) = & \frac{\gamma^{++}_\alpha \langle u_\alpha^-,g\rangle - \gamma^{+-}_\alpha \langle u_\alpha^+,g\rangle + \gamma^{+-}\langle
   u_\alpha,g\rangle}{(\gamma_\alpha^{++}+\gamma_\alpha^{+-})(\gamma_\alpha^{++}-\gamma_\alpha^{+-}+2 \gamma^{+-})}.
 \end{align*}
 Furthermore, $G^\cW_\alpha g$ admits the representation
 \begin{equation}\label{eq:res_representation}
  G^\cW_\alpha g(x) = G^0_\alpha g(x) + u^+_\alpha(x) \phi(0_+) + u^-_\alpha(x) \phi(0_-), \quad \mbox{q.e.\ }x\in \bbR^2_0.
 \end{equation}
 In particular, $\cW = \cH_0 + \mathrm{span}(u^+_\alpha, u^-_\alpha)$.
\end{theorem}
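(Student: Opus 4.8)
The plan is to realize $(Y,P^\cW_x)$ as the two-point symmetric extension of the killed process $(Y^0,P^\cW_x)=(X^0,P_x)$ associated to $(\cE,\cH_0)$ on $L^2(\bbR^2_0,\mu)$, and to apply the boundary theory of~\cite[Theorem 7.7.3]{chen2012symmetric}; the hypotheses needed there are supplied by the irreducibility and the positivity of the hitting probabilities established in Proposition~\ref{prop:irreducibility}, together with $u^\pm_\alpha\in\cH\cap L^1(\bbR^2_0,\mu)$ from Theorem~\ref{thm:char}. First I would prove the resolvent representation~\eqref{eq:res_representation} by the strong Markov property applied at the hitting time $\sigma=\sigma_+\wedge\sigma_-$ of the boundary set $\{0_+,0_-\}$. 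For q.e.\ $x\in\bbR^2_0$ one writes
\[
 G^\cW_\alpha g(x)=E^\cW_x\Big[\int_0^\sigma e^{-\alpha t}g(Y_t)\,dt\Big]+E^\cW_x\big[e^{-\alpha\sigma}\,G^\cW_\alpha g(Y_\sigma)\big].
\]
By Remark~\ref{rem:killed} and Lemma~\ref{lem:ident} the first term equals $G^0_\alpha g(x)$, while in the second term $Y_\sigma\in\{0_+,0_-\}$, so splitting on $\{\sigma=\sigma_+\}$ and $\{\sigma=\sigma_-\}$ and using the definition~\eqref{eq:potW} of $u^\pm_\alpha$ produces $u^+_\alpha(x)\phi(0_+)+u^-_\alpha(x)\phi(0_-)$, which is exactly~\eqref{eq:res_representation}.

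Next I would determine the boundary values $\phi(0_\pm)$ from the weak form of the resolvent equation, $\cE_\alpha(G^\cW_\alpha g,v)=\langle g,v\rangle$ for all $v\in\cW$. Since $u^+_\alpha,u^-_\alpha$ are $\alpha$-harmonic for the killed form, i.e.\ $\cE_\alpha(u^\pm_\alpha,w)=0$ for all $w\in\cH_0$, testing the representation against $v=u^+_\alpha$ and $v=u^-_\alpha$ annihilates the $G^0_\alpha g$ contribution (here $G^0_\alpha g\in\cH_0$) and leaves the symmetric system
\[
 \begin{pmatrix}\cE_\alpha(u^+_\alpha,u^+_\alpha)&\cE_\alpha(u^+_\alpha,u^-_\alpha)\\ \cE_\alpha(u^+_\alpha,u^-_\alpha)&\cE_\alpha(u^-_\alpha,u^-_\alpha)\end{pmatrix}\begin{pmatrix}\phi(0_+)\\ \phi(0_-)\end{pmatrix}=\begin{pmatrix}\langle u^+_\alpha,g\rangle\\ \langle u^-_\alpha,g\rangle\end{pmatrix}.
\]
The inner symmetry of the model, namely the rotation by $\pi$ exchanging $0_+$ and $0_-$, forces the two diagonal entries to coincide. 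The heart of the matter is to identify the Gram entries with the flux functionals,
\[
 \cE_\alpha(u^+_\alpha,u^+_\alpha)=\gamma^{++}_\alpha+\gamma^{+-},\qquad \cE_\alpha(u^+_\alpha,u^-_\alpha)=\gamma^{+-}_\alpha-\gamma^{+-},
\]
with $\gamma^{+-}$ as in~\eqref{eq:energyfun}. Solving by Cramer's rule and using $\langle u_\alpha,g\rangle=\langle u^+_\alpha,g\rangle+\langle u^-_\alpha,g\rangle$ then yields exactly the stated formulas, the determinant being $(\gamma^{++}_\alpha+\gamma^{+-}_\alpha)(\gamma^{++}_\alpha-\gamma^{+-}_\alpha+2\gamma^{+-})$.

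Finally, the identity $\cW=\cH_0+\mathrm{span}(u^+_\alpha,u^-_\alpha)$ is the $\cE_\alpha$-orthogonal decomposition of $\cW$ into $\cH_0$ and the two-dimensional space of $\alpha$-harmonic functions for $\{0_+,0_-\}$, which is spanned by the hitting potentials $u^\pm_\alpha$. It can also be read off from~\eqref{eq:res_representation}: the range of $G^\cW_\alpha$ is $\cE_1$-dense in $\cW$ and is contained in $\cH_0+\mathrm{span}(u^+_\alpha,u^-_\alpha)$, while $\cH_0$ has codimension two in $\cW$ by Corollary~\ref{corro:codimension} and $u^+_\alpha,u^-_\alpha$ are linearly independent modulo $\cH_0$ since they carry different boundary values at $0_\pm$, so the sum is already a closed complement of the right dimension and equality holds. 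The main obstacle is the pair of energy identities relating $\cE_\alpha(u^\pm_\alpha,u^\mp_\alpha)$ to $\gamma^{++}_\alpha,\gamma^{+-}_\alpha,\gamma^{+-}$: establishing them is precisely where the Feller-measure calculus of~\cite[Chapter 7]{chen2012symmetric} enters, as it requires relating the $\alpha$-order hitting potentials $u^\pm_\alpha$ to the $0$-order hitting probabilities $\varphi^\pm$ through the short-time functional~\eqref{eq:energyfun} and verifying that all these couplings are finite, for which $u^\pm_\alpha\in\cH\cap L^1$ from Theorem~\ref{thm:char} is used.
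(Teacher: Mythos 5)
Your proposal is correct and follows essentially the same route as the paper: the paper's proof simply invokes the many-point extension theory of Chen--Fukushima (Theorem 7.7.3) together with Lemma~\ref{lem:ident}, resting on the $\cE_\alpha$-orthogonal decomposition of $\cW$ into $\cH_0$ and the harmonic extensions $\mathrm{span}(u^+_\alpha,u^-_\alpha)$ and on the trace form on $\{0_+,0_-\}$, which is exactly the Gram system you write down. Your unpacking (strong Markov decomposition of the resolvent, the $2\times 2$ system, and the identifications $\cE_\alpha(u^+_\alpha,u^+_\alpha)=\gamma^{++}_\alpha+\gamma^{+-}$, $\cE_\alpha(u^+_\alpha,u^-_\alpha)=\gamma^{+-}_\alpha-\gamma^{+-}$) is consistent with the stated formulas, and you correctly locate the only nontrivial step --- those energy identities --- in the same reference the paper cites.
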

\begin{proof}
 The result follows by an application of~\cite[Theorem 7.7.3]{chen2012symmetric} and Lemma~\ref{lem:ident}.
 The basic idea involves looking at the orthogonal decomposition of $\cW$ into the space of functions that are $0$ on $\{ 0_+, 0_-\}$ and those that are harmonic extensions in $\bbR^2_\cW\setminus \{ 0_+, 0_-\}$ of functions defined on  $\{ 0_+, 0_-\}$; in this case, the space of harmonic extensions is the two-dimensional space $ \mathrm{span}(u^+_\alpha, u^-_\alpha)$.
 This decomposition, together with the construction of the trace of the Dirichlet form $(\cE,\cW)$ on $\{ 0_+, 0_-\}$, is performed in~\cite[Theorem 7.5.4]{chen2012symmetric}.
\end{proof}

\begin{remark}In the state space $\bbR^2_\cW$ consider the domain $\cK = \cH_0 + \bbR \psi $. The process associated to $(\cE,\cK)$ is killed in $\{ 0_-\}$ and reflected back in $\{ 0_+\}$.
\end{remark}

\subsection{One the active reflected Dirichlet space of $\cH_0$}
Reflected Dirichlet spaces play an important role in describing the boundary behavior of symmetric Markov processes and have been introduced by Silverstein in the seminal paper~\cite{silverstein1974reflected} and further investigated by Chen in~\cite{chen1992reflected}.
We refer to the monograph~\cite{chen2012symmetric} for examples and precise definitions.

In this subsection we show that $(\cE,\cW)$ is the \emph{active reflected Dirichlet space} of $(\cE,\cH_0)$. In doing so we shall consider the more general situation where $\rho:\bbR^d\to \bbR$ satisfies only $\rho, \rho^{-1}\in L^1_{\mathrm{loc}}(\bbR^d)$ and
\begin{equation}
  0< \inf_{x\in K} \rho(x) \leq \sup_{x\in K} \rho(x) < \infty
\end{equation}
for any compact $K\subset \bbR^d\setminus\{0\}$.

We need to introduce the space $\cH_{0,\mathrm{loc}}$ of functions that are locally in $\cH_0$.
We say that $u\in \cH_{0,\mathrm{loc}}$ if for all relatively compact open sets $U\subset \bbR^d\setminus\{ 0\}$ there exists $v\in \cH_0$ such that $u=v$ almost surely on $U$.
Then, according to Theorem 6.2.13 of~\cite{chen2012symmetric} the active reflected Dirichlet space $(\cE^{\mathrm{ref}},\cF^{\mathrm{ref}}_a)$ of $(\cE,\cH_0)$ can be described as
\begin{equation}\label{eq:cFa}
  \cF^{\mathrm{ref}}_a = \bigg\{u\in L^2(\bbR^d,\rho dx): \tau_k u \in \cH_{0,\mathrm{loc}}, \,\forall k\geq 1,\, \sup_{k\geq 1}
  \int_{\bbR^d}\mu_{\langle \tau_k u\rangle}(dx)<\infty\bigg\},
\end{equation}
\begin{equation}\label{eq:cEref}
  \cE^{\mathrm{ref}}(u,u) = \lim_{k\to \infty} \frac{1}{2}  \int_{\bbR^d}  \mu_{\langle \tau_k u\rangle}(dx),
\end{equation}
where $\tau_k u = (-k)\vee u \wedge k$ and where  $\mu_{\langle v\rangle}$ is the energy measure associated to $v \in \cH_{0,\mathrm{loc}}$ with respect to  $(\cE,\cH_0)$ (see eq. (3.2.20) and the discussion at page 130 in~\cite{fukushima2011dirichlet}).

\begin{prop}\label{prop:reflected} We have $(\cE,\cW) = (\cE^{\mathrm{ref}},\cF^{\mathrm{ref}}_a)$.
\end{prop}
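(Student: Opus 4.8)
The plan is to prove the equality of domains $\cW=\cF^{\mathrm{ref}}_a$ and to read off the equality of forms from a single monotone-convergence identity; by polarisation it suffices to treat the quadratic forms. The only input I need about the reflected space, besides its description~\eqref{eq:cFa}--\eqref{eq:cEref}, is that for $v\in\cH_{0,\mathrm{loc}}$ the energy measure is $\mu_{\langle v\rangle}(dx)=2|\nabla v|^2\rho\,dx$ on $\bbR^d\setminus\{0\}$, normalised so that $\tfrac12\mu_{\langle v\rangle}(\bbR^d)=\cE(v,v)=\int|\nabla v|^2\rho\,dx$; this follows from strong locality of $(\cE,\cH_0)$. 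Throughout write $\tau_k u=(-k)\vee u\wedge k$.

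First I would establish the easy inclusion $\cW\subseteq\cF^{\mathrm{ref}}_a$. Given $u\in\cW$ and a relatively compact open $U\subset\bbR^d\setminus\{0\}$, I multiply $\tau_k u$ by a cutoff $\chi\in C_0^\infty(\bbR^d\setminus\{0\})$ equal to $1$ on $U$; since $\rho$ is bounded above and below on $\supp\chi$, the bounded compactly supported function $\chi\tau_k u$ lies in the classical Sobolev space and hence in $\cH_0$, which gives $\tau_k u\in\cH_{0,\mathrm{loc}}$. The energy bound is immediate, $\int_{\bbR^d}\mu_{\langle\tau_k u\rangle}=2\int_{\bbR^d}\indicator_{\{|u|<k\}}|\nabla u|^2\rho\,dx\leq2\cE(u,u)$ uniformly in $k$, so $u\in\cF^{\mathrm{ref}}_a$.

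The substantial direction is $\cF^{\mathrm{ref}}_a\subseteq\cW$, and I expect the only real difficulty there. Let $u\in\cF^{\mathrm{ref}}_a$. From $\tau_k u\in\cH_{0,\mathrm{loc}}$ I first get that $u$ is weakly differentiable on $\bbR^d\setminus\{0\}$, and the supremum condition in~\eqref{eq:cFa} together with monotone convergence yields $\int_{\bbR^d}|\nabla u|^2\rho\,dx<\infty$; by Cauchy--Schwarz and $\rho^{-1}\in L^1_{\loc}$, exactly as in~\eqref{eq:localL1}, this makes $\nabla(\tau_k u)$ locally integrable on $\bbR^d$. The crux is to upgrade $\tau_k u\in W^{1,1}_{\loc}(\bbR^d\setminus\{0\})$ to $\tau_k u\in W^{1,1}_{\loc}(\bbR^d)$, i.e.\ to show the origin produces no singular distributional term. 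This is where boundedness of $\tau_k u$ and $d\geq2$ are used: testing against $\phi\in C_0^\infty(\bbR^d)$ with cutoffs $\chi_\epsilon$ vanishing on $\{|x|\leq\epsilon\}$, equal to $1$ on $\{|x|>2\epsilon\}$ and with $|\nabla\chi_\epsilon|\leq C/\epsilon$, the only boundary term obeys
\[
 \Big|\int_{\bbR^d}\tau_k u\,\phi\,\partial_i\chi_\epsilon\,dx\Big|\leq\frac{C}{\epsilon}\|\phi\|_\infty\,k\,\big|\{\epsilon<|x|<2\epsilon\}\big|\leq C'\,k\,\epsilon^{\,d-1},
\]
which tends to $0$ as $\epsilon\to0$. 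Hence the weak gradient of $\tau_k u$ on $\bbR^d$ agrees with its gradient off the origin, and $\tau_k u\in\cW$.

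It then remains to let $k\to\infty$. The functions $\tau_k u$ are bounded in the Hilbert space $(\cW,\cE_1)$ and converge to $u$ in $L^2(\bbR^d,\mu)$, so a weakly convergent subsequence has its $\cE_1$-limit in $\cW$ equal $\mu$-a.e.\ to $u$; this is the same weak-compactness argument as in Proposition~\ref{prop:onepoint} and gives $u\in\cW$. Finally the identity
\[
 \cE^{\mathrm{ref}}(u,u)=\lim_{k\to\infty}\tfrac12\int_{\bbR^d}\mu_{\langle\tau_k u\rangle}=\lim_{k\to\infty}\int_{\bbR^d}\indicator_{\{|u|<k\}}|\nabla u|^2\rho\,dx=\cE(u,u)
\]
holds by monotone convergence on both inclusions, so the two forms coincide on $\cW=\cF^{\mathrm{ref}}_a$, and polarisation extends this to the associated bilinear forms.
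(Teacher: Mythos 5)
Your argument is correct and follows essentially the same route as the paper: identify the energy measure as $2|\nabla v|^2\rho\,dx$, reduce to the two inclusions, and observe that the whole issue is removing the origin when passing from $W^{1,1}_{\loc}(\bbR^d\setminus\{0\})$ to $W^{1,1}_{\loc}(\bbR^d)$. The only (harmless) divergence is in that last step: the paper integrates by parts along almost every line, which misses the origin and so handles unbounded $u$ directly, whereas you use the cutoff estimate $k\epsilon^{d-1}\to 0$ (valid since $d\geq 2$), which forces you to work with the truncations $\tau_k u$ and then recover $u\in\cW$ by weak compactness in $(\cW,\cE_1)$ --- an extra but perfectly sound step.
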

\begin{proof} We start with the observation that due Theorem 4.3.11 of~\cite{chen2012symmetric} we have for all bounded $v\in \cH_0$
\[
\mu_{\langle v\rangle}(dx) = 2 |\nabla v|^2 \rho \,dx,
\]
which easily extends to all bounded $v\in\cH_{0,\mathrm{loc}}$. In view of~\eqref{eq:cFa},~\eqref{eq:cEref} and dominated convergence we can rewrite
\[
\cF^{\mathrm{ref}}_a = \bigg\{u\in L^2(\bbR^d,\rho dx):  u \in \cH_{0,\mathrm{loc}},
\int_{\bbR^d} |\nabla u|^2 \rho dx<\infty\bigg\}
\]
and $\cE^\mathrm{ref}(u) = \int |\nabla u|^2 \rho dx$ for all $u\in \cF_a^\mathrm{ref}$. It remains to show that $\cW = \cF^{\mathrm{ref}}_a$.

We start with $\cW \subseteq \cF^{\mathrm{ref}}_a$. Let $u\in \cW$, then by definition $u\in L^2(\bbR^d,\rho dx)$  and $ \int |\nabla u|^2 \rho dx <\infty$.
Moreover, $u$ can be approximated on compact subsets of $\bbR^d\setminus \{0\}$ by smooth functions with respect to $\cE_1$. This is possible because $\rho,\rho^{-1}$ are locally bounded in $\bbR^d\setminus \{0\}$. Thus, $u\in \cH_{0,\mathrm{loc}}$ and consequently $u\in\cF^{\mathrm{ref}}_a$.

We proceed by showing $\cF^{\mathrm{ref}}_a\subseteq \cW$.
If $u\in \cF^{\mathrm{ref}}_a$, then by definition $u\in L^2(\bbR^d,\rho dx)$ and $\cE(u,u)<\infty$.
 We are left to show that $u\in W^{1,1}_{\mathrm{loc}}(\bbR^d)$. As $u\in \cH_{0,\mathrm{loc}}$ we know that $u$ is weakly differentiable on $\bbR^d\setminus \{0\}$.
Thus, $u$ is absolutely continuous along almost every line contained in $\bbR^d\setminus \{0\}$.
Moreover, $u,\partial_k u$ belong to $L^1_{\mathrm{loc}}(\bbR^d)$ for all $k=1,\ldots,d$ because $\cE_1(u,u)<\infty$ (see~\eqref{eq:localL1}). If we  integrate by parts along the $k$th direction, we see that for all $\phi \in C^\infty_0(\bbR^d)$ and all $k=1,\ldots, d$
\[
\int_{\bbR^d}  u \partial_k \phi dx = -\int_{\bbR^d}  \partial_k  u \phi dx.
\]
This shows that $u$ is weakly differentiable on $\bbR^d$ and thus $u\in W_{\mathrm{loc}}^{1,1}(\bbR^d)$.
\end{proof}

\begin{remark}
  According to Theorem 6.6.9 of~\cite{chen2012symmetric}, it follows from Proposition~\ref{prop:reflected} that $(\cE,\cW)$ is the maximal Silverstein extension of $(\cE,\cH_0)$.
\end{remark}

\section{Further examples}
\subsection{Higher rank extensions} In the previous section we have seen a weight on the plane which gave $\cH\neq \cW$ with $\cH$ having codimension one in $\cW$. A very naive modification of that example which does not involve the introduction of new points of degeneracy can also produce higher rank extensions.
Fix $a:[0,1]\to [0,\infty)$ satisfying~\eqref{ass:integrability},~\eqref{ass:onepointdeg} and~\eqref{ass:onerank} (e.g. $a(r) = r^\alpha$ with $0<\alpha<2$).
Let $N\geq 2$, we now split the plane into $2N$ cones of angle $\pi/N$ meeting at the origin. More precisely, we define in polar coordinates $(r,\theta)$
\[
 C_i \ldef (0,\infty)\times[\pi(i-1)/N,\pi i/N) \quad i=1,\dots 2N
\]
and set $\rho: \bbR^d \to [0,\infty)$ by
\begin{equation}\label{def:weight_two_higherrank}
 \rho(x)\;\ldef \;\left \{
 \begin{array}{ll}
  a{(|x|)}^{-1}\;, & \quad x\in C_i \cap B(0,1),\, i\mbox{ odd},  \\
  a(|x|)\;,        & \quad x\in C_i \cap B(0,1),\, i\mbox{ even}, \\
  1\;,             & \quad \mbox{otherwise}
 \end{array}
 \right.
\end{equation}
By the same techniques as in Section~\ref{sec:guidingmodel} it can be shown that
\[
 \cW = \cH + \mathrm{span}(\psi_1,\ldots \psi_N),
\]
where $\psi_i(x)\ldef{(1-|x|^2)}_+ \varphi_i(x)$ and
\begin{equation}\label{def:psi_i}
 \varphi_i(x)\;\ldef \;\left \{
 \begin{array}{ll}
  \cos\left(N\theta(x)/2\right) \;, & \quad x\in C_{2i-2},   \\
  1\;,                              & \quad x\in C_{2i-1},   \\
  \sin\left(N\theta(x)/2\right) \;, & \quad x\in C_{2i},     \\
  0\;,                              & \quad \mbox{otherwise}
 \end{array}
 \right.
\end{equation}
with the convention that $C_0\ldef C_{2N}$. This means that $\rho$ is not regular. A regularization of the Dirichlet form $(\cE,\cW)$ on $\bbR^2$ can be achieved by modifying the topology around the origin in such a way to make each $\psi_i$, $i\in \{ 1,\ldots, N\}$ continuous.
Roughly speaking, the new state space is obtained by splitting the origin in $N$ different points.
The stochastic process associated to the regularized Dirichlet form $(\cE,\cW)$ on this new state space will have the property of reaching the origin from one of the cones $C_{2i}$, $i\in \{ 1,\ldots, N\}$ and of being reflected back in the same cone from which it arrived.

\subsection{Higher dimensions} So far we discussed examples in dimension two, however the presence of non-regular weights is not a prerogative of the plane.
We shall now present another example due to Zhikov in~\cite[Section 5.2]{zhikov1998weighted}; the reader should keep in mind that a way to
construct such weights is to set up a situation where the process killed at the points of degeneracy approaches these points from ``disconnected'' regions.
\begin{figure}[h]
 \centering
 \includegraphics[width=0.5\textwidth]{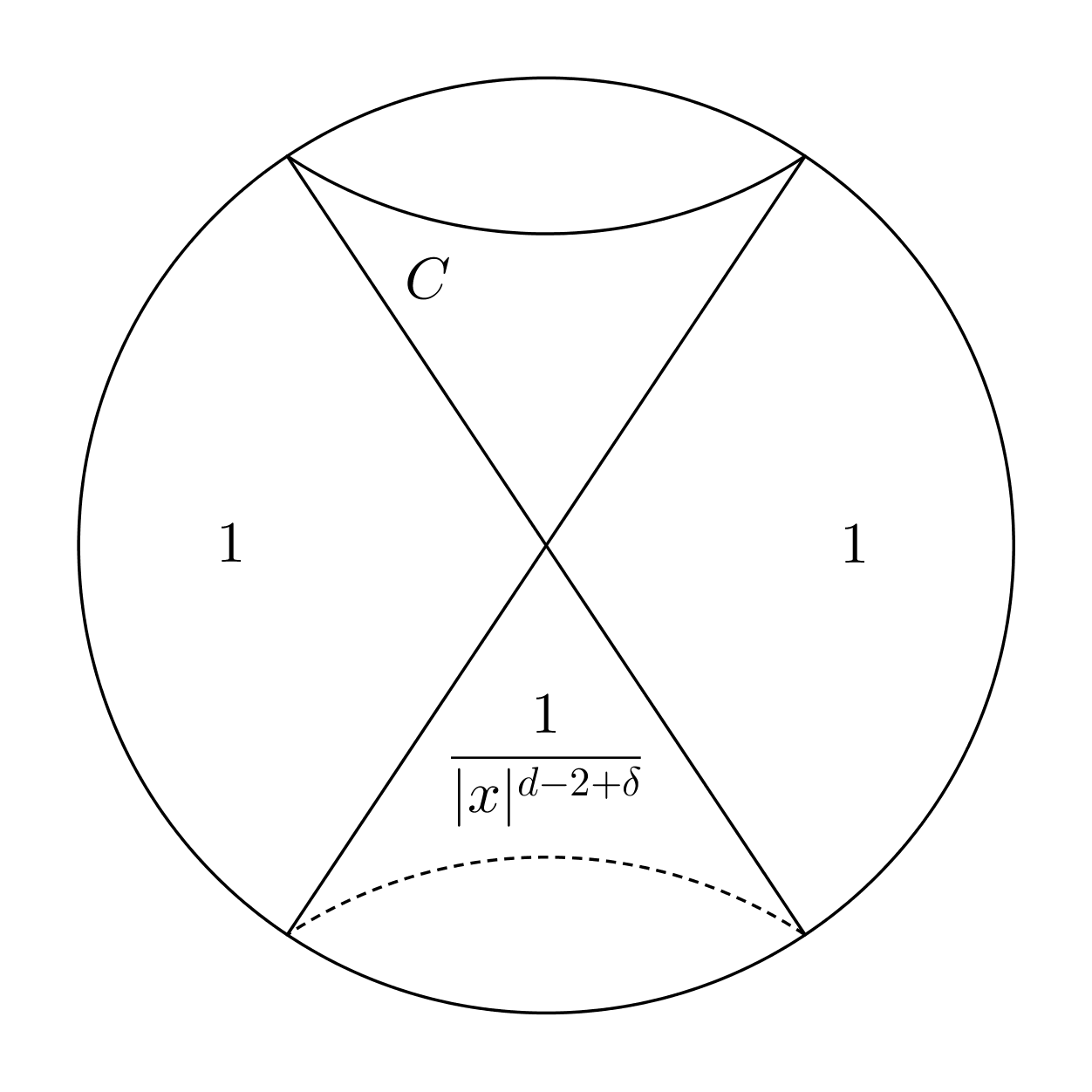}
 \caption{the weight $\rho(x)$.}\label{fig:cone}
\end{figure}

Let $d>2$ and $C$ be a circular cone, for $\delta>0$ small, define $\rho: \bbR^d \to [0,1]$ by
\begin{equation}\label{def:weight_geqthree}
 \rho(x)\;\ldef \;\left \{
 \begin{array}{ll}
  \frac{1}{|x|^{d-2+\delta}}\;, & \quad C \cap B(0,1),   \\
  1\;,                          & \quad \mbox{otherwise}
 \end{array}
 \right.
\end{equation}

It is proved in~\cite[Section 5.2]{zhikov1998weighted} that $\rho$ is not regular. This should not surprise the reader as the process associated to
\[
 L u = \frac{1}{\rho}\nabla\cdot(\rho \nabla u)
\]
has a drift pointing towards the origin inside $C$ and it is a Brownian motion otherwise. Thus we expect the process to hit zero from one of the two disconnected regions of $C\setminus \{ 0\}$.
By Proposition~\ref{prop:onepoint}, the origin will have positive capacity and
the same discussion as in Section~\ref{sec:H0H} can be carried out. In particular a regularization accounts in splitting the origin into two disconnected points. The process associated with the regularization will have the property that after hitting the origin, due to the continuity of the sample paths, it must depart from the same side it came.

\subsection{Fractal barrier} In this section we present an other example from~\cite{zhikov1998weighted} of a non-regular weight where a fractal barrier of positive capacity and zero measure is considered. This example was considered in~\cite{zhikov1996connectedness} to describe the phenomenon of fractal conductivity and homogenization for $\cH$- and $\cW$-solutions. We will keep the discussion at an informal level.
\begin{figure}[h]
 \centering
 \includegraphics[width=0.5\textwidth]{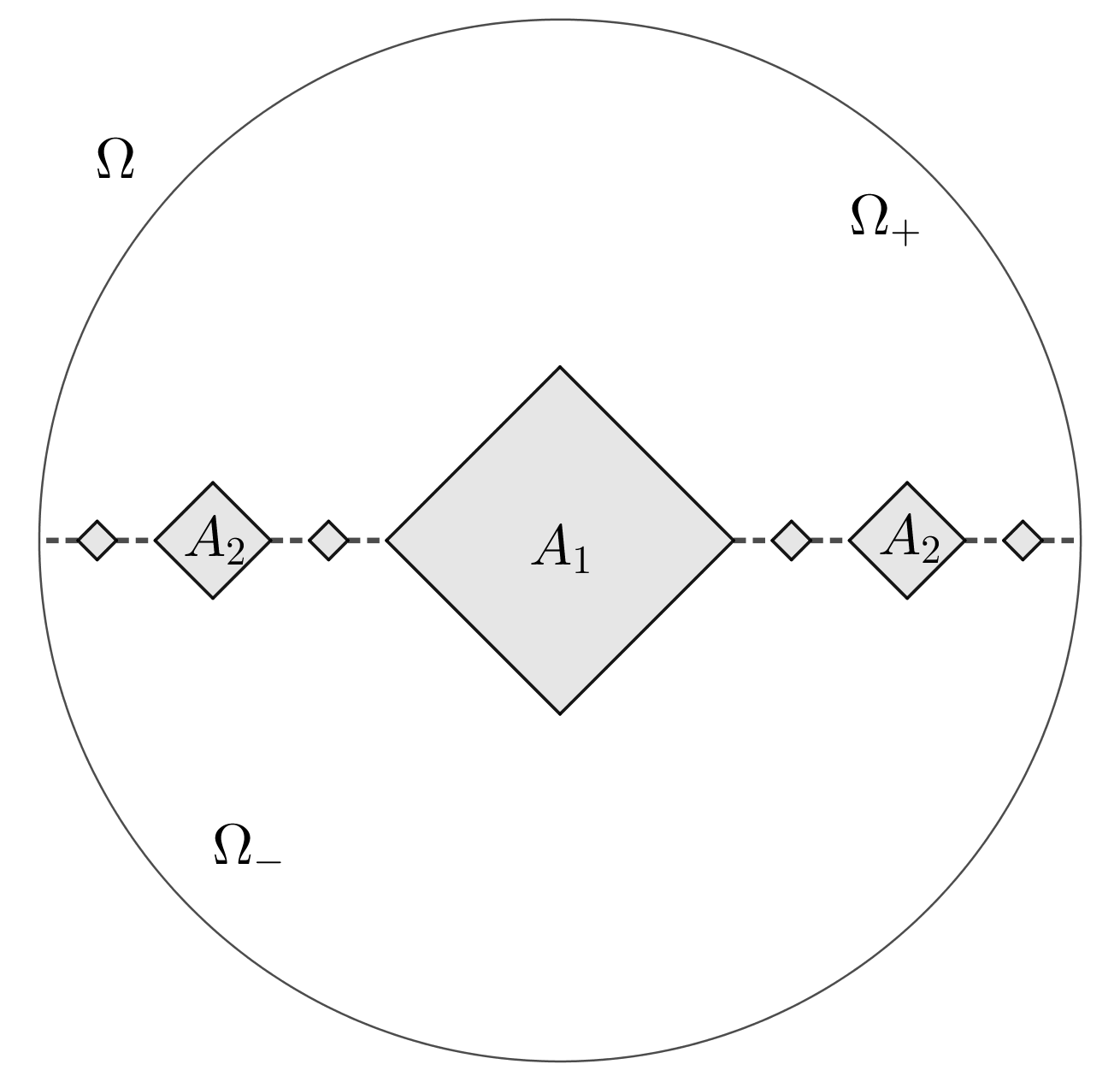}
 \caption{The Cantor barrier.}\label{fig:frac}
\end{figure}
We will construct a fractal barrier splitting a ball $\Omega$ of unitary diameter into two sides (see Figure~\ref{fig:frac}).
To this end, we consider the classical Cantor set on its diameter, that is, we divide the interval into three parts of the same length and delete the interior of the middle one.
We proceed by dividing the two remaining parts into three parts each and by deleting the middle parts, and so on. For $j\geq 1$, we denote by $A_j$ the union of the closed squares whose diagonals are the deleted intervals of length $3^{-j}$ (see Figure~\ref{fig:frac}).
The union of these squares and the diameter is a set $\Lambda$ which partitions $\Omega$ into two disjoint open domains $\Omega_+$ and $\Omega_-$ such that $\Sigma \ldef \overline{\Omega}_+\cap \overline{\Omega}_-$  is the Cantor set on the diameter. The weight $\rho:\Omega \to (0,\infty)$ has the following structure
\begin{equation}\label{def:weight_fract}
 \rho(x)\;\ldef \;\left \{
 \begin{array}{ll}
  a(x)\;, & \quad \mbox{on }\Lambda                \\
  1\;,    & \quad \mbox{on }\Omega\setminus\Lambda
 \end{array}
 \right.
\end{equation}
where $a(x) = 2^{-j}$ whenever $x\in A_j$. Clearly, $0<\rho\leq 1$ and $\rho^{-1} \in L^1(\Omega)$. In order to prove that $\cH\neq\cW$ the following test function is employed: $u:\Omega \to [0,1]$,
\[
 u(x) = 1,\quad x\in\Omega_+,\qquad u(x) = 0,\quad x\in\Omega_-,
\]
and $u|_{A_j}$ solves the Dirichlet problem on $A_j$ with boundary conditions one on $\overline{\Omega}_+\cap A_j$ and zero on $\overline{\Omega}_-\cap A_j$ for all $j\in \bbN$.

One can prove that $u\in \cW\setminus \cH$ (see~\cite[Section 5.3]{zhikov1998weighted}). This is due to the fact that $u$ is not constant on $\Omega\setminus \Lambda$, even though $\nabla u =0$ on $\Omega\setminus \Lambda$ almost everywhere. If $u$ were in $\cH$, then $\Omega\setminus \Lambda$ would not be $2$-connected, which is in contradiction with~\cite[Section 2]{zhikov1996connectedness} where $2$-connectedness is proven.

We want now understand the non-regularity of $\rho$ from the point of view of stochastic analysis, at least on a heuristic level.
We consider the diffusion process $(X,P_x)$ associated to $(\cE,\cH)$ on $L^2(\Omega,\mu)$ where $\mu = \rho dx$ as usual. Observe that it is a Brownian motion on $\Omega\setminus \Lambda$ and that $\Sigma$ has positive capacity because of Proposition~\ref{prop:regularity}.
This implies that $X$  hits $\Sigma$ with positive probability.
Immediately after hitting $\Sigma$ the process is going to continue its journey in either $\Omega_+$ or $\Omega_-$ with no preference due to the horizontal symmetry of $\Omega$.

Let us now consider the Dirichlet form $(\cE,\cW)$ on $L^2(\Omega,\mu)$. As smooth functions are not dense in $\cW$, $(\cE,\cW)$ is not regular and a regularization is called for. We notice that $u$ is continuous in  $\Omega\setminus \Sigma$ by construction and that the discontinuity arises in going through $\Sigma$ from $\Omega_+$ to $\Omega_-$.
This suggests that to regularize $(\cE,\cW)$ it could suffice to split the Cantor set $\Sigma$ into two disconnected copies so that, in the new state space, $\overline{\Omega}_+$ and $\overline{\Omega}_-$ would be disconnected.
In this case, $\Sigma$ would act as a \emph{hard barrier} and the only way for the process associated to $(\cE,\cW)$ to go through $\Lambda$ would be by  traversing one of the $A_j$.

This intuition leads to the following conclusion: the conductivity between $\Omega_+$ and $\Omega_-$ should be less for $(\cE,\cW)$ than for $(\cE,\cH)$, since for the process associated to $(\cE,\cW)$ it is harder to traverse $\Lambda$. This is what is proven rigorously in~\cite[Section 6]{zhikov1998weighted} in the context of homogenization.

\subsubsection*{Acknowledgment}
The authors would like to thank Professor M.\ Fukushima for his remarks and for showing interest in the paper.
The first author would like to thank the \emph{Laboratoire d'Excellence} LabEx Archim\`ede for supporting him financially in Marseille where this research was carried out.
Finally, but most importantly, the authors would like to thank  V.V.\ Zhikov as the present work was directly inspired by~\cite{zhikov1998weighted}.
Not long before Zhikov passed away, he was informed about this project and was kind enough to encourage us to complete it.
Vasily Vasilyevich's contribution to analysis is so rich that there is no doubt it will continue to be a source of inspiration for us and many generations of mathematicians.
\bibliographystyle{plain}
\bibliography{bibliography}

\end{document}